\DeclareSymbolFont{script}{U}{eus}{m}{n}
\DeclareMathSymbol{\Wedge}{0}{script}{"5E}
\DeclareMathAlphabet{\mathrmsl}{OT1}{cmr}{m}{sl}
\newcommand{\R}{\mathbb R}
\newcommand{\weg}[1]{}
\newcommand{\tr}{\mathrm{tr}}
\renewcommand{\c}{\mathrm{c}}
\newcommand{\nc}{\mathrm{nc}}
\newcommand{\Id}{\mathrm{Id}}
\newcommand{\Scal}{\mathrm{Scal}}
\newcommand{\Ric}{\mathrm{Ric}}
\newcommand{\gr}{\mathrm{grad}}
\newcommand{\C}{{\mathbb C}}
\newcommand{\D}{\partial}
\renewcommand{\d}{\mathrm{d}}
\theoremstyle{plain}
\newtheorem{thm}{Theorem}[section]
\newtheorem*{thm*}{Theorem}
\newtheorem{lem}[thm]{Lemma}
\newtheorem{prop}[thm]{Proposition}
\theoremstyle{definition}
\newtheorem{defn}{Definition}[section]
\newtheorem{exmp}[defn]{Example}
\theoremstyle{remark}
\newtheorem{rem}{Remark}[section]
\title[Local description of Bochner-flat (pseudo-)K\"ahler metrics]
{Local description of Bochner-flat (pseudo-)K\"ahler metrics}
\author{Alexey V. Bolsinov}
\author{Stefan Rosemann}
\numberwithin{equation}{section}
\address{Department of Mathematical Sciences, Loughborough University, Loughborough
LE11 3TU, UK}
\address{Faculty of Mechanics and Mathematics, Moscow State University, 119991
Moscow, Russia}
 \email{A.Bolsinov@lboro.ac.uk} 
\address{Institut f\"ur Differentialgeometrie, Leibniz Universit\"at Hannover, Welfengarten 1, 30167 Hannover, Germany}
\email{stefan.rosemann@math.uni-hannover.de}
\subjclass[2010]{53B20,53B30,53B35,53C35,53C25}
\begin{document}

\begin{abstract}
The Bochner tensor is the K\"ahler analogue of the conformal Weyl tensor. In this article, 
we derive local (i.e., in a neighbourhood of almost every point) normal forms for a (pseudo-)K\"ahler manifold with vanishing Bochner tensor.
The description is pined down to a new class of symmetric spaces which we describe in terms of their curvature operators.
We also give a local description of weakly Bochner-flat metrics defined by the property that the Bochner tensor has vanishing divergence.
Our results are based on the local normal forms for c-projectively equivalent metrics. As a by-product, we also describe all K\"ahler-Einstein metrics admitting a c-projectively equivalent one.
\end{abstract}

\maketitle
\setcounter{tocdepth}{1}

\section{Introduction}

\subsection{Main results}

The Bochner tensor (see the definition~\eqref{eq:Bochner}) first appeared in~\cite[Equation (71)]{Bochner}.
Together with the trace-less Ricci tensor and the scalar curvature, it forms the three irreducible components 
into which the curvature tensor of a (pseudo-)K\"ahler manifold decomposes
under the action of the unitary group, cf.\ \cite[\S 1.2]{ApostolovI}, \cite[\S 2.63]{Besse} or \cite[\S 2.1.3]{Bryant}.
Although the definition of the Bochner tensor is formally analogous to that of the Weyl tensor, the local description of Bochner-flat metrics, i.e., 
those metrics for which the Bochner tensor vanishes identically, is much more complicated 
than the corresponding description of metrics with vanishing Weyl tensor -- recall that a metric with vanishing Weyl tensor 
locally is conformally equivalent to a flat metric (in dimension $\geq 4$).
A local classification of positive definite Bochner-flat metrics has been obtained in~\cite{ApostolovI,Bryant}.
The purpose of this article is to generalize these results to arbitrary signature: in Theorem~\ref{thm:Bochner0}, 
we give local normal forms (in a neighbourhood of a generic point) for a Bochner-flat (pseudo-)K\"ahler manifold.

These normal forms are parametrized by simple algebraic data: for the construction of a Bochner-flat (pseudo-)K\"ahler structure in complex dimension $n\geq 2$, 
choose a number $0\leq \ell\leq n$ together with a polynomial $\Theta(t)$ of degree $\leq \ell+2$ and a hermitian endomorphism $A_0$ 
on some (pseudo-)hermitian vector space $\mathbb V$ of complex dimension $k=n-\ell$ such that $\Theta(A_0)=0$. 
From this data we construct a (pseudo-)K\"ahler symmetric space $S$ of complex dimension $k$ (see Theorem~\ref{thm:symmetricspace}). 
The Bochner-flat (pseudo-)K\"ahler structure is then defined on the total space $M$ of a (local) fibration $M\rightarrow S$ over the base $S$ 
with fibers diffeomorphic to $\C^\ell$ (see Theorem~\ref{thm:Bochner0} and Remark~\ref{rem:originBochner}).

Let us be more precise about that: a (pseudo-)hermitian vector space $(\mathbb V,g_0,J_0)$ consists of a (possibly indefinite) 
scalar product $g_0$ on a real vector space $\mathbb V$ together with a $g_0$-orthogonal complex
structure $J_0:\mathbb V\rightarrow \mathbb V$. 
Let $A_0:\mathbb V\rightarrow \mathbb V$ be a hermitian (i.e., $g_0$-symmetric and $J_0$-commuting) endomorphism
and $\Theta(t)$ a polynomial with real coefficients. 
Let $\mathfrak{so}(g_0)$ denote the space of skew-symmetric endomorphisms of $(\mathbb V,g_0)$ and
consider the linear operator $\widetilde{\mathsf R}_{\Theta,A_0}:\mathfrak{so}(g_0)\rightarrow \mathfrak{so}(g_0)$ defined by
\begin{equation}\label{eq:derivofpol}
\widetilde{\mathsf R}_{\Theta,A_0}(X)=\frac{\d}{\d t}\Big|_{t=0}\Theta(A_0+tX)\quad(X\in \mathfrak{so}(g_0))
\end{equation}
(see also~\eqref{eq:polcurvopRiem0}). We will explain in \S\ref{subsec:algprelim2} that $\widetilde{\mathsf R}_{\Theta,A_0}$ 
satisfies the Bianchi identity (cf.\ also~\cite{BFom70,BolTsonev}), that is, we have
$$
0=\widetilde{\mathsf R}_{\Theta,A_0}(x\wedge y)z+\widetilde{\mathsf R}_{\Theta,A_0}(z\wedge x)y+\widetilde{\mathsf R}_{\Theta,A_0}(y\wedge z)x
\quad\mbox{for all}\quad x,y,z\in \mathbb V,
$$
where we identified $\Lambda^2\mathbb V$ and $\mathfrak{so}(g_0)$ via $x\wedge y=g_0(x,\cdot)\otimes y-g_0(y,\cdot)\otimes x$.
Hence, $\widetilde{\mathsf R}_{\Theta,A_0}$ can be identified with an algebraic curvature tensor $\widetilde{R}_{\Theta,A_0}$ 
(having the same symmetries as the curvature tensor of a Riemannian manifold).
We can project  $\widetilde{R}_{\Theta,A_0}$ onto the space of algebraic curvature tensors having the same symmetries as the K\"ahler curvature operator
(see Lemma~\ref{lem:projection} for the explicit formula of the projection operator) 
and end up with an algebraic K\"ahler curvature operator $R_{\Theta,A_0}$ (see also~\eqref{eq:defRThA} for a precise definition).
The next theorem describes the symmetric spaces which form one part of a Bochner-flat (pseudo-)K\"ahler structure:

\begin{thm}[Special symmetric spaces]\label{thm:symmetricspace}
Suppose we are given a (pseudo-)hermitian vector space $(\mathbb V,g_0,J_0)$ together with
\begin{itemize}
\item a hermitian endomorphism $A_0:\mathbb V\rightarrow \mathbb V$ and 
\item a polynomial $\Theta(t)$ such that $\Theta(A_0)=0$.
\end{itemize}
Then, the algebraic curvature tensor $R_0=R_{\Theta,A_0}$ satisfies
\begin{equation}\label{eq:condsymmetric}
0=[R_0(x,y),R_0(u,v)]-R_0\bigl(R_0(x,y)u,v\bigr)-R_0\bigl(u,R_0(x,y)v\bigr)
\end{equation}
for all $x,y,u,v\in \mathbb V$ and $[R_0(x,y),A_0]=0$ for all $x,y\in \mathbb V$. 
In particular, there exists a (pseudo-)K\"ahler symmetric space $(M,g,J)$
with curvature tensor $R$, parallel hermitian endomorphism $A$ and with base point $p$ 
such that $T_p M\cong \mathbb V$ in a certain basis and, w.r.t.\ this identification, we have
$g(p)=g_0$, $J(p)=J_0$, $A(p)=A_0$ and $R(p)=R_0$.
\end{thm}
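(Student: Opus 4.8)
\emph{Overall strategy.} The statement splits into an algebraic part --- that $R_0=R_{\Theta,A_0}$ satisfies~\eqref{eq:condsymmetric} and $[R_0(x,y),A_0]=0$ --- and a geometric part, the realisation of such data by a symmetric space, which is Cartan's classical construction. The plan is to prove the two identities first for the Riemannian curvature operator $\widetilde{\mathsf R}_{\Theta,A_0}$ of~\eqref{eq:derivofpol} (where they reduce to two short bracket computations) and then to carry them through the K\"ahler projection of Lemma~\ref{lem:projection}.

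\emph{The identities for $\widetilde{\mathsf R}_{\Theta,A_0}$.} Writing $\Theta(t)=\sum_m c_m t^m$, so that $\widetilde{\mathsf R}_{\Theta,A_0}(X)=\sum_m c_m\sum_{i+j=m-1}A_0^iXA_0^j$, a telescoping sum gives $[\widetilde{\mathsf R}_{\Theta,A_0}(X),A_0]=\sum_m c_m[X,A_0^m]=[X,\Theta(A_0)]=0$; hence the image of $\widetilde{\mathsf R}_{\Theta,A_0}$ lies in the centraliser $\mathfrak{z}(A_0):=\{\xi\in\mathfrak{so}(g_0)\colon[\xi,A_0]=0\}$. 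Secondly, any $\xi\in\mathfrak{z}(A_0)$ commutes with every power of $A_0$, so $\widetilde{\mathsf R}_{\Theta,A_0}([\xi,X])=[\xi,\widetilde{\mathsf R}_{\Theta,A_0}(X)]$; in the language of curvature tensors this reads $[\xi,\widetilde{R}_{\Theta,A_0}(u,v)]=\widetilde{R}_{\Theta,A_0}(\xi u,v)+\widetilde{R}_{\Theta,A_0}(u,\xi v)$, i.e.\ every $\xi\in\mathfrak{z}(A_0)$ annihilates $\widetilde{R}_{\Theta,A_0}$ under the natural action. Combining these, for arbitrary $x,y$ the endomorphism $\rho:=\widetilde{R}_{\Theta,A_0}(x,y)$ lies in $\mathfrak{z}(A_0)$, and substituting $\xi=\rho$ into the last identity is precisely~\eqref{eq:condsymmetric} for $\widetilde{R}_{\Theta,A_0}$; the commutation with $A_0$ is the first computation.

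\emph{Transport through the projection.} Since $A_0$ is hermitian, $J_0\in\mathfrak{z}(A_0)$ and $\mathfrak{z}_{\mathfrak{u}}(A_0):=\mathfrak{z}(A_0)\cap\mathfrak{u}(g_0,J_0)$ is a Lie subalgebra. The projection of Lemma~\ref{lem:projection} is built from $g_0$ and $J_0$ only, hence is $\mathrm{U}(g_0,J_0)$-equivariant and in particular $\mathfrak{z}_{\mathfrak{u}}(A_0)$-equivariant; and $\widetilde{R}_{\Theta,A_0}$, being $\mathfrak{z}(A_0)$-invariant, is a fortiori $\mathfrak{z}_{\mathfrak{u}}(A_0)$-invariant, so the same holds for $R_0$. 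The relation $[R_0(x,y),A_0]=0$ I would read off from the explicit formula~\eqref{eq:defRThA}, which expresses $R_0(x,y)$ through $A_0$, $g_0$, $J_0$ and the Ricci tensor of $\widetilde{R}_{\Theta,A_0}$ --- all of which commute with $A_0$ (the Ricci tensor of $\widetilde{R}_{\Theta,A_0}$ is itself a polynomial in $A_0$, and hermiticity of $A_0$ makes the trace corrections in the projection preserve the commutation). Granting this, $R_0(x,y)\in\mathfrak{u}(g_0,J_0)$ because $R_0$ is a K\"ahler curvature operator, so $\mathfrak{h}_0:=\mathrm{span}\{R_0(x,y)\colon x,y\in\mathbb V\}\subseteq\mathfrak{z}_{\mathfrak{u}}(A_0)$, whence the $\mathfrak{z}_{\mathfrak{u}}(A_0)$-invariance of $R_0$ reads $[\xi,R_0(u,v)]=R_0(\xi u,v)+R_0(u,\xi v)$ for all $\xi\in\mathfrak{h}_0$ --- which is~\eqref{eq:condsymmetric}. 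This passage is the step I expect to be the main obstacle: the equivariance argument disposes of~\eqref{eq:condsymmetric} at once, but ``$R(x,y)$ commutes with $A_0$'' is not manifestly stable under an arbitrary $\mathrm{U}(g_0,J_0)$-module projection, so the explicit form of the projection must genuinely be used.

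\emph{The symmetric space.} Granting~\eqref{eq:condsymmetric} and the first Bianchi identity for $R_0$, inherited from $\widetilde{R}_{\Theta,A_0}$ (cf.\ \S\ref{subsec:algprelim2}, \cite{BFom70,BolTsonev}), I would run Cartan's construction. On $\mathfrak{g}_0:=\mathfrak{h}_0\oplus\mathbb V$ put $[\xi+u,\eta+v]:=\bigl([\xi,\eta]-R_0(u,v)\bigr)+(\xi v-\eta u)$; the Jacobi identity holds, its nontrivial components amounting to $\mathfrak{h}_0\subseteq\mathfrak{so}(g_0)$ being a subalgebra, to~\eqref{eq:condsymmetric}, and to the first Bianchi identity. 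Together with the involution that is $+\Id$ on $\mathfrak{h}_0$ and $-\Id$ on $\mathbb V$, and the $\mathrm{ad}(\mathfrak{h}_0)$-invariant form $g_0$ on $\mathbb V$ (invariant because $\mathfrak{h}_0\subseteq\mathfrak{so}(g_0)$), this is an orthogonal symmetric Lie algebra, and the associated simply connected pseudo-Riemannian symmetric space $(M,g)$ carries a base point $p$ with $T_pM\cong\mathfrak{g}_0/\mathfrak{h}_0\cong\mathbb V$, $g(p)=g_0$ and $R(p)=R_0$. As $R_0$ is K\"ahler we have $[\mathfrak{h}_0,J_0]=0$, so $J_0$ extends to a parallel, $g$-orthogonal almost complex structure $J$ (both properties hold at $p$ and are preserved under parallel transport), necessarily integrable, so $(M,g,J)$ is pseudo-K\"ahler; similarly $[\mathfrak{h}_0,A_0]=0$ lets $A_0$ extend to a parallel hermitian endomorphism $A$ with $A(p)=A_0$, which completes the argument.
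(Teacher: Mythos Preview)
Your proof is correct and reaches the same conclusions as the paper, but the organisation differs. The paper (Proposition~\ref{prop:curvminpol}(a),(c)) works directly with the explicit K\"ahler operator formula~\eqref{eq:polcurvop}, which is a sum of terms of two types, $A^rXA^s$ and $\tr(JA^rX)JA^s$; the commutation $[{\mathsf R}_{\Theta,A}(X),A]=0$ follows because the first part telescopes to $[X,\Theta(A)]=0$ and the second part is a scalar multiple of $JA^s$, and the symmetric-space identity~\eqref{eq:condsymmetric} is then checked by verifying that each term type separately satisfies ${\mathsf R}_i([\xi,Y])=[\xi,{\mathsf R}_i(Y)]$ for any $\xi$ commuting with $A$. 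Your route---proving $\mathfrak{z}(A_0)$-invariance for the Riemannian operator $\widetilde{\mathsf R}_{\Theta,A_0}$ and then transporting it through the $\mathrm{U}(g_0,J_0)$-equivariant projection---is a little more conceptual and sidesteps the term-by-term check; the price is exactly what you flag, namely that $[R_0(x,y),A_0]=0$ is not automatic under projection and must still be read off from the explicit formula. One small correction: formula~\eqref{eq:defRThA} does not involve the Ricci tensor of $\widetilde{R}_{\Theta,A_0}$; the ``trace corrections'' you need are the terms $\tr(JA^rX)JA^s$ in~\eqref{eq:polcurvop}, and they commute with $A_0$ simply because $JA^s$ does. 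The symmetric-space realisation via Cartan's construction matches what the paper cites from~\cite{Helgason}.
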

The new information of Theorem~\ref{thm:symmetricspace} is that $R_{\Theta,A_0}$ satisfies \eqref{eq:condsymmetric}. 
The second part of the theorem is well-known (cf.\ for instance \cite[Ch.\ IV]{Helgason}): 
a symmetric space $(M,g)$ with a fixed base point $p\in M$ can be described equivalently by a (pseudo-)euclidean vector space $(\mathbb V,g_0)$ 
together with an algebraic curvature tensor $R_0$ satisfying~\eqref{eq:condsymmetric}. 
Moreover, any tensor field $T_0$ on $\mathbb V$ annihilated (in the sense of the natural $\mathfrak{gl}(\mathbb V)$-action on 
tensor spaces over $\mathbb V$) by the curvature endomorphisms $R_0(x,y)\in \mathfrak{gl}(\mathbb V)$ for all $x,y\in \mathbb V$, 
corresponds to a parallel tensor field $T$ on $(M,g)$ with $T(p)=T_0$. Note that $g_0$ and $R_0$
are annihilated by the curvature endomorphisms and that this also holds for the tensors $A_0$, $J_0$ 
from Theorem~\ref{thm:symmetricspace} w.r.t.\  the algebraic curvature tensor $R_0=R_{\Theta,A_0}$. 
Recall also that the holonomy algebra $\mathfrak h=\mathfrak{hol}_p(M,g)$ of the symmetric space $(M,g)$ at the base point $p\in M$ is given by 
$\mathfrak{h}=\mathrm{span}\{R_0(u,v):u,v\in \mathbb V\}\subseteq \mathfrak{so}(g_0)$. 

By the deRham-Wu decomposition Theorem~\cite{deRham,Wu}, in order to classify simply-connected (pseudo-)Riemannian symmetric spaces, 
it suffices to restrict to the indecomposable case, when there are no proper nondegenerate invariant subspaces for the action of $\mathfrak h$ on $(\mathbb V,g_0)$. 
Irreducible (pseudo-)Riemannian symmetric spaces are completely classified. The situation is more complicated in the indecomposable but not irreducible case. 
A general classification of pseudo-Riemannian symmetric spaces is still an open problem, see~\cite{KathOlbrich} for details. 
We plan to investigate the structure of the special (pseudo-)K\"ahler symmetric spaces from Theorem~\ref{thm:symmetricspace} in more detail in forthcoming articles. 
At the present time we do not know whether they provide new examples or whether they are all contained in existing classification results. 
We also remark that by a completely analogous procedure -- namely by ignoring the complex structure and 
simply considering the Riemannian curvature operator $\widetilde{\mathsf R}_{\Theta,A_0}$ from \eqref{eq:derivofpol} -- we would end up with a (pseudo-)Riemannian (non-K\"ahler) symmetric space. 
Our main focus in this article however lies on the description of Bochner-flat metrics which is the reason why we considered Theorem~\ref{thm:symmetricspace} only in the K\"ahler setting.

\begin{rem}[Producing explicit examples]\label{rem:symmetricspace}
Locally, the construction of the (pseudo-)Riemannian symmetric space $(M,g,T)$ via the data $(\mathbb V, g_0,T_0,R_0)$ can be obtained as follows
(although, the formulas may be hard to evaluate): 
for each $u\in \mathbb V$, let $R_u:\mathbb V\rightarrow \mathbb V$ be the endomorphism $R_u(v)=R_0(u,v)u$ and let $\varphi_u:\mathbb V\rightarrow \mathbb V$
be the isomorphism defined by 
\begin{equation}\label{eq:varphiu}
\varphi_u=\sum_{j=0}^\infty\frac{R_u^j}{(2j+1)!}.
\end{equation}
Then, one obtains a symmetric space $(M,g)$ and a parallel tensor field $T$ by setting
\begin{equation}\label{eq:symmetricu}
M=\mathbb V,\quad g(u)=\varphi_u^{-1}\cdot g_0=g_0(\varphi_u\cdot,\varphi_u\cdot) 
\quad\mbox{and}\quad
T(u)=\varphi_u^{-1}\cdot T_0,
\end{equation}
where ``$\,\cdot\,$'' denotes the standard action of $\mathrm{Gl}(\mathbb V)$ on the tensor spaces over $\mathbb V$.
Since $R_u|_{u=0}=0$ and therefore $\varphi_u|_{u=0}=\Id$, we have $g(0)=g_0$, $T(0)=T_0$ and $R(0)=R_0$ 
(where the curvature is given by $R(u)=\varphi_u^{-1}\cdot R_0$). 
\end{rem}

\begin{rem}[Freedom in the choice of the initial data]\label{rem:initialdata}
According to the deRham-Wu decomposition theorem~\cite{deRham,Wu},
the symmetric (pseudo-)K\"ahler manifold $(M,g,J)$ constructed via~Theorem \ref{thm:symmetricspace} 
decomposes (locally) into a direct product according to the generalized eigenspaces of the parallel hermitian endomorphism $A$.
Of course, this decomposition already holds at the purely algebraic level, cf.\ Proposition~\ref{prop:curvminpol}(a).
If we want to use Theorem~\ref{thm:symmetricspace} for the construction of indecomposable examples, 
we can therefore suppose that $A_0$ has only one real eigenvalue or a pair of complex conjugate eigenvalues. 
Moreover, in case of one real eigenvalue $\lambda$, we can suppose that $A_0$
is nilpotent by replacing it with $\tilde A_0=A_0-\lambda\Id$. 
Replacing $\Theta(t)$ at the same time with $\tilde{\Theta}(t)=\Theta(t+\lambda)$ yields
$\tilde\Theta(\tilde A_0)=0$ and $R_{\Theta,A_0}=R_{\tilde\Theta,\tilde A_0}$ (cf.\ Remark~\ref{rem:changepol}) 
so that the symmetric space we obtain via Theorem~\ref{thm:symmetricspace} remains unchanged.
\end{rem}

\begin{rem}[The positive definite case]\label{rem:posdefsymmetric}
For $c\in \R$ we have that $R_{\Theta,c\cdot\Id}$ is an algebraic curvature tensor of constant holomorphic sectional curvature (proportional to) $\Theta'(c)$ 
and of scalar curvature $-\mathrm{dim}_\C(\mathbb V)(\mathrm{dim}_\C(\mathbb V)+1)\Theta'(c)$ 
(cf.\ \eqref{eq:ScalRAB} and \eqref{eq:defRThA}). It follows from Remark~\ref{rem:initialdata} that for 
$g_0$ positive definite (hence, $A_0$ diagonalizable) the only examples we obtain via Theorem~\ref{thm:symmetricspace} 
are direct products of spaces of constant holomorphic sectional curvature.  
In this view, Theorem~\ref{thm:symmetricspace} shows that in the case of indefinite metrics there exist
nontrivial generalisations of such spaces related to operators which decompose into nontrivial Jordan blocks.
\end{rem}
By Remark~\ref{rem:posdefsymmetric}, nontrivial examples for the symmetric spaces from Theorem~\ref{thm:symmetricspace}
only appear in dimension $\mathrm{dim}_\C(\mathbb V)\geq 2$. The simplest example is the following:

\begin{exmp}\label{ex:symmetric}
Use, as the data $(\mathbb V,g_0,J_0,A_0,\Theta)$ in Theorem~\ref{thm:symmetricspace}, the vector space 
$\mathbb V=\R^4$ with (pseudo-)hermitian structure $(g_0,J_0)$ and hermitian endomorphism $A_0$ given by
$$
g_0=\left(\begin{array}{cccc}
0&1&0&0\\
1&0&0&0\\
0&0&0&1\\
0&0&1&0
\end{array}\right),\quad
J_0=\left(\begin{array}{cccc}
0&0&-1&0\\
0&0&0&-1\\
1&0&0&0\\
0&1&0&0
\end{array}\right),
\quad
A_0=\left(\begin{array}{cccc}
0&1&0&0\\
0&0&0&0\\
0&0&0&1\\
0&0&0&0
\end{array}\right)
$$
together with the polynomial $\Theta(t)=t^3$. Write an arbitrary point $u\in \R^4$ as $u=\sum_{i=1}^4 u_i e_i$, where $e_1,e_2,e_3,e_4$
is the standard basis of $\R^4$. From the algebraic curvature tensor $R_0=R_{\Theta,A_0}$ (computed by using~\eqref{eq:defRThA}), 
the endomorphisms $R_u=R_0(u,\cdot)u:\R^4\rightarrow \R^4$ from Remark~\ref{rem:symmetricspace} 
can be computed straight-forwardly and they all satisfy $R_u^2=0$. 
Formulas~\eqref{eq:varphiu} and~\eqref{eq:symmetricu} then give the locally symmetric space $(\R^4,g_\c,J_\c)$
with parallel hermitian endomorphism $A_\c$, where
\begin{equation}\label{eq:ex_symmetric}
g_\c=\left(\begin{array}{cccc}
0&1&0&0\\
1&\tfrac{u_4^2}{3}&0&-\tfrac{u_2u_4}{3}\\
0&0&0&1\\
0&-\tfrac{u_2u_4}{3}&1&\tfrac{u_2^2}{3}
\end{array}\right)
,\quad 
J_\c=\left(\begin{array}{cccc}
0&\tfrac{u_2u_4}{3}&-1&\tfrac{1}{6}(u_4^2-u_2^2)\\
0&0&0&-1\\
1&\tfrac{1}{6}(u_4^2-u_2^2)&0&-\tfrac{u_2 u_4}{3}\\
0&1&0&0
\end{array}\right)
,\quad 
A_\c=A_0.
\end{equation}
Note that nilpotency of all $R_u$'s implies that the symmetric space is Ricci-flat (as can be also seen directly from $R_{\Theta,A_0}$).
Of course also the choice $\Theta(t)=t^2$ satisfies $\Theta(A_0)=0$ and therefore produces a symmetric space by Theorem~\ref{thm:symmetricspace}.
However, in this case the $R_u$'s are not nilpotent in general and~\eqref{eq:symmetricu} produces more complicated formulas.
Note that the K\"ahler form $\omega_\c=g_\c(J_\c\cdot,\cdot)$ of the (pseudo-)K\"ahler structure $(g_\c,J_\c)$ 
from~\eqref{eq:ex_symmetric} is given by
\begin{equation}\label{eq:ex_symmetric2}
\omega_\c=\d u_1\wedge \d u_4+\d u_2\wedge \d u_3+\tfrac{1}{6}(u_2^2+u_4^2)\d u_2\wedge \d u_4
\end{equation}
and that it can be written as $\omega_\c=-\d\alpha$ for the 1-form
\begin{equation}\label{eq:alpha}
\alpha=\tfrac{1}{6}u_2^2 u_4\d u_2-u_2 \d u_3-(u_1+\tfrac{1}{6}u_2 u_4^2)\d u_4.
\end{equation}
The article \cite{CalFino} contains a classification of all invariant (pseudo-)K\"ahler structures 
on homogeneous four-manifolds with nontrivial isotropy subalgebra. In the symmetric setting, the isotropy subalgebra just coincides with the holonomy algebra $\mathfrak h$.
We can compute straight-forwardly that for the present example it takes the form $\mathfrak{h}=\mathrm{span}\{J_0A_0\}$. Since $\mathfrak{h}$ is $1$-dimensional and
the square of its generator $J_0A_0$ is zero, the above example is described by the case ``$1.3^1.31$ and $1.3^1.32$'' of \cite[Theorem 3.1]{CalFino}.
We also see from the matrices of $g_0$ and $J_0A_0$ that $\mathrm{span}\{e_1,e_3\}$ and its subspaces are 
the only subspaces of $\R^4$ preserved by the holonomy algebra $\mathfrak h$ and that $\mathrm{span}\{e_1,e_3\}$ is isotropic. 
Thus, the symmetric space $(g_\c,J_\c,\omega_\c)$ from \eqref{eq:ex_symmetric} and \eqref{eq:ex_symmetric2} is not irreducible 
but still it is indecomposable.
\end{exmp}
The symmetric spaces produced by Theorem~\ref{thm:symmetricspace} from the data $(g_0,J_0,A_0,\Theta)$ will be denoted by 
$\mathcal{S}_{g_0,J_0,A_0,\Theta}$. More generally, we denote by $\mathcal{S}_{g_0,J_0,A_0,\Theta}$ also open subsets 
of these spaces. Let $\mathrm{Spec}(A_0)$ denote the subset of $\C$ consisting of the eigenvalues of 
the endomorphism $A_0:\mathbb V\rightarrow \mathbb V$. By $\mathrm{Roots}(\Theta)\subseteq \C$, 
we denote the set of roots of the polynomial $\Theta(t)$. 
The local description of Bochner-flat (pseudo-)K\"ahler structures~$(g,\omega)$ is as follows:

\begin{thm}[Bochner-flat (pseudo-)K\"ahler structures]\label{thm:Bochner0}
Let $n\geq 2$, $0\leq \ell\leq n$ and consider open subsets $U,V\subseteq \R^{\ell}$. Let $t_1,\dots,t_\ell$ be real coordinates on $V$
and let $\rho_1,\dots,\rho_\ell$ be coordinates on $U$. We allow $\rho_i$ to be complex in which case it
occurs together with its complex conjugate $\rho_j=\bar{\rho}_i$ for some $j\neq i$. 
Consider the following data:
\begin{itemize}
\item A polynomial $\Theta(t)=C_2t^{\ell+2}+C_1t^{\ell+1}+\dots$ of degree $\leq \ell+2$.
\item A hermitian endomorphism $A_0$ on some $2k=2n-2\ell$-dimensional (pseudo-)hermitian vector space $(\mathbb V,g_0,J_0)$ such that
$\Theta(A_0)=0.$
\end{itemize}
Given this, we define the following:
\begin{itemize}
\item $p(t)=\prod_{i=1}^\ell(t-\rho_i)$ and $\Delta_i=p'(\rho_i)=\prod_{j\neq i}(\rho_i-\rho_j)$ (where $p'(t)=\tfrac{\d}{\d t}p(t)$). 
\item Functions $\mu_i$, $i=1,\dots,\ell$, and $\mu_i(\hat{\rho}_j)$, $i=1,\dots,\ell-1$, $j=1,\dots,\ell$, on $U$ which are
the $i$th elementary symmetric polynomials in the variables $\rho_1,\dots,\rho_\ell$ resp.\ 
$\rho_1,\dots,\hat{\rho}_j\dots,\rho_\ell$ ($\rho_j$ omitted).
\item The locally symmetric space $(S,g_\c,\omega_\c,A_\c)=\mathcal{S}_{g_0,J_0,A_0,\Theta}$.
\item The 1-forms $\theta_i=\d t_i+\alpha_i$ on $U\times V\times S$ ($i=1,\dots,\ell$), 
where $\alpha_i$ are 1-forms on $S$ such that 
\begin{equation}\label{eq:diffalpha}
\d \alpha_i=(-1)^i\omega_\c(A_\c^{\ell-i}\cdot,\cdot).
\end{equation}
(Note that for each $i$, the right-hand side is a closed 2-form on $S$.)
\end{itemize}
Then, we obtain a Bochner-flat (pseudo-)K\"ahler structure $(g,\omega)$ on the open subset 
$$
M^0=\{(\vec{\rho},\vec{t},x)\in U\times V\times S:\rho_i\neq \rho_j\,\,\forall i\neq j\,\mbox{ and }\,\rho_i\notin\mathrm{Spec}(A_0) \cup\mathrm{Roots}(\Theta)\,\,\forall i\}
$$ 
of $U\times V\times S$ which is given by the formulas
\begin{equation}\label{eq:gwBochner}
\begin{array}{c}
\displaystyle g=\sum_{i=1}^\ell \frac{\Delta_i}{\Theta(\rho_i)} \d \rho_i^2
+\sum_{i,j=1}^\ell\left[\sum_{s=1}^\ell\frac{\mu_{i-1}(\hat{\rho}_s)\mu_{j-1}(\hat{\rho}_s)}{\Delta_s}\Theta(\rho_s)\right]\theta_i\theta_j
+ g_\c(p(A_\c) \cdot , \cdot),
\vspace{1mm}\\
\displaystyle 
\omega=\sum_{i=1}^\ell \d \mu_i \wedge \theta_i + \omega_\c(p(A_c)\cdot , \cdot).
\end{array}
\end{equation}
Conversely, every $2n$-dimensional Bochner-flat (pseudo-)K\"ahler manifold $(M,g,\omega)$ 
takes locally, in a neighbourhood of a generic point, the form~\eqref{eq:gwBochner}.

The Bochner-flat (pseudo-)K\"ahler structure $(g,\omega)$ from~\eqref{eq:gwBochner}  
\begin{itemize}
\item has constant holomorphic sectional curvature with scalar curvature $\Scal=-n(n+1)C_1$ if and only if 
$\Theta(t)$ has degree $\leq \ell+1$, that is, $C_2=0$,
\item is flat if and only if $\Theta(t)$ has degree $\leq \ell$, that is, $C_2=C_1=0$,
\item is locally symmetric if and only if $\ell=0$ or $C_2=0$. 
\end{itemize}
\end{thm}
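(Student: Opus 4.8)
The plan is to route everything through c-projective geometry. The starting point, which we recall, is that for a Kähler metric of complex dimension $n\geq2$ the Bochner tensor coincides, up to normalisation, with the c-projective Weyl tensor of the underlying c-projective structure; hence $(g,\omega)$ is Bochner-flat precisely when its c-projective structure is flat. A flat c-projective structure has maximal degree of c-projective mobility, so near almost every point a Bochner-flat Kähler metric admits a non-proportional c-projectively equivalent metric $\bar g$. In the exceptional case where every such $\bar g$ is affinely equivalent to $g$ the metric is a special symmetric space --- this is the case $\ell=0$ of~\eqref{eq:gwBochner} and is checked directly --- so assume $\bar g$ is not affinely equivalent to $g$ (equivalently, $g$ carries a non-parallel Hamiltonian $2$-form). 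From $(g,\bar g)$ one forms the $g$-hermitian endomorphism field $A$ (the c-projective mobility tensor), which solves the c-projective metrisability equation; the coefficients of its characteristic polynomial Poisson-commute, and its complexified eigenvalues split into a locally constant part and a genuinely non-constant part $\rho_1,\dots,\rho_\ell$ with $\ell\geq1$.

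Next I would feed $A$ into the local normal form for Kähler metrics carrying such a mobility tensor --- this is the c-projective normal-form theorem on which the paper rests. Near a generic point, where the $\rho_i$ are pairwise distinct and non-constant, the functions $\rho_i$, conjugate coordinates $t_i$, and a Kähler quotient $(S,g_\c,\omega_\c)$ equipped with a $g_\c$-hermitian endomorphism $A_\c$ of constant spectrum (the constant eigenvalues of $A$) bring $(g,\omega)$ into exactly the shape~\eqref{eq:gwBochner}, with $\Theta$ an a priori arbitrary smooth function of one variable and the $1$-forms $\theta_i=\d t_i+\alpha_i$ constrained only by closedness of $\omega$, which is precisely~\eqref{eq:diffalpha}. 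This step supplies the adapted coordinates and the block structure; what remains is to extract the constraints imposed by Bochner-flatness itself.

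Now I would impose vanishing of the Bochner tensor on~\eqref{eq:gwBochner}, splitting it into the part built from the fiber directions $\d\rho_i,\theta_i$ and the part involving the base directions tangent to $S$. Using the c-projective transformation law relating the curvatures of $g$ and $\bar g$, the fiber components reduce to a linear ordinary differential equation for $\Theta$ whose solution space is exactly the polynomials of degree $\leq\ell+2$; write $\Theta=C_2t^{\ell+2}+C_1t^{\ell+1}+\dots$. The base components then force the curvature operator $R_\c$ of $(S,g_\c)$ to coincide with $R_{\Theta,A_\c}$, together with $\Theta(A_\c)=0$; by Theorem~\ref{thm:symmetricspace} such a curvature operator satisfies~\eqref{eq:condsymmetric} and commutes with $A_\c$, so $(S,g_\c,\omega_\c,A_\c)$ is locally the special symmetric space $\mathcal{S}_{g_0,J_0,A_0,\Theta}$ built (as in Theorem~\ref{thm:symmetricspace}) from $\mathbb V=T_pS$, with $g_0,J_0,A_0$ the values at a base point $p\in S$ of $g_\c$, the complex structure of $S$, and $A_\c$. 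This proves that every Bochner-flat metric is locally of the form~\eqref{eq:gwBochner}. The forward direction runs the same computation backwards: given the algebraic data one checks that~\eqref{eq:gwBochner} is Kähler ($\omega$ is closed by~\eqref{eq:diffalpha}, and the $J$ defined by $g(J\cdot,\cdot)=\omega$ satisfies $J^2=-\Id$, both verified in the adapted coordinates), that it carries a c-projectively equivalent mate, and that its Bochner tensor vanishes --- for $\ell=0$ the latter being the purely algebraic statement that $R_{\Theta,A_0}$ from~\eqref{eq:defRThA} is a Bochner-flat algebraic curvature tensor, and for $\ell\geq1$ following by propagating this along the fibration $M^0\to S$.

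Finally, the three bullets follow by inspecting the curvature of~\eqref{eq:gwBochner}, which the computation above produces explicitly in terms of $\Theta$, $R_{\Theta,A_0}$ and the $\rho_i$: one reads off that the metric has constant holomorphic sectional curvature exactly when the top coefficient $C_2$ drops out (i.e.\ $\deg\Theta\leq\ell+1$), in which case $\Scal=-n(n+1)C_1$; that it is flat exactly when also $C_1$ drops out, since a Kähler metric of constant holomorphic sectional curvature is flat iff its scalar curvature vanishes; and that it is locally symmetric exactly when $\ell=0$ (then~\eqref{eq:gwBochner} is literally $\mathcal{S}_{g_0,J_0,A_0,\Theta}$, symmetric by Theorem~\ref{thm:symmetricspace}) or $C_2=0$ (then it has constant holomorphic sectional curvature), the converse of this last assertion holding because for $\ell\geq1$ the coefficient $C_2$ multiplies a part of the curvature that depends non-trivially on the non-constant functions $\rho_i$ and so obstructs $\nabla R=0$. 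The technical heart throughout is the curvature computation for~\eqref{eq:gwBochner}: a head-on evaluation of the Bochner tensor of a block metric of this size is prohibitive, so the real work is to organise it --- expressing the Bochner tensor through $g$ and the mate $\bar g$ (equivalently through $A$, $\mathrm{grad}(\tr A)$ and their covariant derivatives), reducing its vanishing to algebraic identities in $\Theta(\rho_i)$, the elementary symmetric functions $\mu_i$ and the curvature of $S$, and matching these against~\eqref{eq:defRThA}; the remaining bookkeeping --- that~\eqref{eq:gwBochner} is Kähler and c-projectively mobile, that the normal-form data are faithfully encoded by $(\mathbb V,g_0,J_0,A_0,\Theta)$, and the $\nabla R$-computation for the symmetry dichotomy --- is lengthy but routine.
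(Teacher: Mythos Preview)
Your overall architecture matches the paper's: put the metric in the c-projective normal form~\eqref{eq:gjwrhocoord} and then analyse what Bochner-flatness forces on the parameters $\Theta_i$ and on $(S,g_\c,\omega_\c,A_\c)$. The substantive difference --- and the gap --- lies in \emph{which} solution $A$ of~\eqref{eq:main} you feed into the normal form.

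You take $A$ to be the mobility tensor of an arbitrary c-projectively equivalent $\bar g$, manufactured via c-projective flatness. The paper instead takes $A$ to be (a linear combination of $\Id$ and) the normalised Ricci tensor $\tilde{\Ric}=\Ric-\tfrac{\Scal}{2(n+1)}g$: Lemma~\ref{lem:Bochnertensor} says $(g,\omega)$ is weakly Bochner-flat iff $\tilde{\Ric}$ satisfies~\eqref{eq:main}, so this choice is available and it comes with the relation $\Ric\in\mathrm{span}\{A,\Id\}$ built in. That relation is what makes the curvature analysis work: inserting it into~\eqref{eq:Bochner} yields the decomposition $R=R_{q,A}+\mathrm{Boch}$ for an explicit quadratic $q$ (equation~\eqref{eq:curvWBF}), after which the prolongation identity~\eqref{eq:prolong} together with the sectional-operator relation~\eqref{eq:secopcpro} reduce the vertical part of $\mathrm{Boch}=0$ to the single equation $\nabla\Lambda=\tfrac14 q(A)$. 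In the normal-form coordinates this last equation is \emph{equivalent} to ``$\Theta_i=\Theta$ is a polynomial of degree $\le\ell+2$'' together with $\Theta(A_\c)=0$ (Lemma~\ref{lem:Bols1}, Proposition~\ref{prop:Bochner1}); the remaining horizontal condition $R_\c=R_{g_\c,\Theta,A_\c}$ is then extracted by a direct computation of the horizontal curvature (Proposition~\ref{prop:Bochner2}).

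With a generic mobility tensor $A$ there is no relation between $\Ric$ and $A$, the decomposition~\eqref{eq:curvWBF} is unavailable, and your assertion that ``the fiber components reduce to a linear ODE for $\Theta$ whose solution space is exactly the polynomials of degree $\le\ell+2$'' has no visible mechanism; it is not clear that the $\Theta_i$ are polynomials at all for such a choice. A related symptom: your ``exceptional case where every $\bar g$ is affinely equivalent to $g$'' is empty for Bochner-flat metrics --- c-projective flatness forces local mobility $(n+1)^2$, which exceeds the dimension $n^2$ of parallel hermitian endomorphisms --- so your route never yields the $\ell=0$ presentations that describe the locally symmetric examples of non-constant holomorphic sectional curvature. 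In the paper's approach these arise precisely when $\tilde{\Ric}$ is parallel. The remedy is simple: set $A=\tilde{\Ric}$ from the outset; the detour through c-projective flatness is correct but unnecessary.
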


\begin{rem}[Geometric interpretation of the formulas~\eqref{eq:gwBochner}]\label{rem:originBochner}
The coordinate vector fields $\D/\D t_i$ are mutually commuting hamiltonian Killing vector fields for $(g,\omega)$ from~\eqref{eq:gwBochner}:
the coefficients of $g$ do not depend on the variables $t_i$, hence, each vector field $\D/\D t_i$ is Killing. Moreover $i_{\D/\D t_i}\omega=-\d\mu_i$
and therefore $\D/\D t_i=J\gr\,\mu_i$ is a hamiltonian vector field with hamiltonian function $\mu_i$. Since each $\D/\D t_i$ preserves $g$ and $\omega=g(J\cdot,\cdot)$,
it also preserves the complex structure $J$, i.e., each $\D/\D t_i$ is a holomorphic vector field. Then, also the vector fields $J(\D/\D t_i)$ are holomorphic. 
The symplectic reduction w.r.t.\ the (local) hamiltonian $\R^\ell$-action generated by $\D/\D t_1,\dots,\D/\D t_\ell$ 
with moment map $\vec\mu=(\mu_1,\dots,\mu_\ell):U\rightarrow \R^\ell$ yields the manifold $S$ together with the (pseudo-)K\"ahler structure $(g_\c(p(A_\c) \cdot , \cdot),J_\c)$
parametrized by the level sets of $\vec\mu$. The complex manifold $(S,J_\c)$ is the quotient of $(M,J)$ w.r.t.\ the (local) $\C^\ell$-action
generated by the holomorphic vector fields $\D/\D t_i,J(\D/\D t_i)$, $i=1,\dots,\ell$.

The $\R^\ell$-valued 1-form $\theta=(\theta_1,\dots,\theta_\ell):TM^0\rightarrow \R^\ell$ should be thought of as a connection 1-form 
on the (local) $\R^\ell$-bundle $U\times V\times S\supseteq M^0\rightarrow U\times S$ (projection onto $U\times S$). 
The condition~\eqref{eq:diffalpha} on the differentials $\d\theta_i=\d\alpha_i$ relates the curvature of this connection to the K\"ahler geometry of the base manifold $(S,g_\c,\omega_\c)$. 
It determines the 1-forms $\alpha_i$ only up to a change
$\alpha_i\to \alpha_i'=\alpha_i+\d f_i$, where the $f_i$'s are arbitrary functions on $S$. Such functions define
a ``fiber-preserving'' local transformation $f:M^0\rightarrow M^0$ given by $f(\vec \rho,\vec t,x)=(\vec \rho,t_1+f_1(x),\dots,t_\ell+f_\ell(x),x)$
which pulls back $(g,\omega)$ from~\eqref{eq:gwBochner} to the corresponding (pseudo-)K\"ahler structure written down w.r.t.\ $\theta_i'=\theta_i+\d f_i$. 

The functions $\rho_1,\dots,\rho_\ell$ also have a geometric interpretation: they are the nonconstant eigenvalues 
of the normalized Ricci tensor $\tilde \Ric=\Ric-\tfrac{\Scal}{2(n+1)}g$ of $(g,\omega)$ (when considered as hermitian endomorphism via metric duality).
More details on the geometric origin of the formulas~\eqref{eq:gwBochner} will be given in~\S\ref{ssec:methods}, Theorem~\ref{thm:localclass} and Remark~\ref{rem:naturalcoord}.
In \eqref{eq:gjwrhocoord} we will also give a formula for the complex structure $J$ corresponding to $(g,\omega)$ (in a more general context).
\end{rem}

\begin{rem}[The positive definite locally symmetric case]
Suppose $(M,g,\omega)$ is locally symmetric Bochner-flat and of nonconstant holomorphic sectional curvature, that is, 
$(M,g,\omega)$ is described by Theorem~\ref{thm:Bochner0} with $C_2\neq 0$ and $\ell=0$. In other words, $(M,g,\omega)$
takes the form of a locally symmetric space $\mathcal S_{g_0,J_0,A_0,\Theta}$ 
as described in Theorem~\ref{thm:symmetricspace} for $\Theta(t)=C_2t^2+C_1t+C_0$ a polynomial of degree $\leq 2$.
Assume further that $g$ is positive definite. By Remark~\ref{rem:posdefsymmetric}, $(M,g,\omega)=\prod_{i=1}^N(M_i,g_i,\omega_i)$, where each component $(M_i,g_i,\omega_i)$ 
has constant holomorphic sectional curvature $\Theta'(c_i)$, where $c_1,\dots,c_N$ are the different eigenvalues of the endomorphism $A_0$.
However, since $A_0$ is annihilated by the quadratic polynomial $\Theta(t)$, there are at most two such eigenvalues. Suppose that $c_1\neq c_2$
are the eigenvalues of $A_0$ such that $(M,g,\omega)=(M_1,g_1,\omega_1)\times (M_2,g_2,\omega_2)$ decomposes into two components
of constant holomorphic sectional curvatures $\Theta'(c_1)$ and $\Theta'(c_2)$ respectively. 
Since we must have $\Theta(t)=(t-c_1)(t-c_2)$ up to a nonzero factor, we see that $\Theta'(c_1)=(c_1-c_2)=-(c_2-c_1)=-\Theta'(c_2)$.
We obtain the well-known statement from~\cite{Matsumoto1} that a complex $n$-dimensional positive definite locally symmetric Bochner-flat K\"ahler manifold 
is locally a direct product $M^{n-s}_c\times M^{s}_{-c}$ for certain $0\leq s\leq n$, $c\in \R$, where $M^n_c$ 
denotes the complex $n$-dimensional space of constant holomorphic sectional curvature equal to $c$.

Note also that it was proven in \cite{Bryant} (positive definite) and \cite{CahSchwachhoefer} (arbitrary signature) that a compact Bochner-flat manifold
is always locally symmetric.
\end{rem}
According to Theorem~\ref{thm:Bochner0}, the next two examples exhaust all $4$-dimensional examples 
for a non-symmetric Bochner-flat (pseudo-)K\"ahler structure. 

\begin{exmp}[$n=\ell=2$]
Consider $\R^4$ with coordinates $\rho_1,\rho_2,t_1,t_2$, where $t_1,t_2$ are real and $\rho_1,\rho_2$ are either real or 
complex-conjugate, i.e., $\rho_2=\bar{\rho}_1$. Let $\Theta(t)=C_2t^4+C_1t^3+C_0t^2+C_{-1}t+C_{-2}$ be an arbitrary polynomial of degree $\leq 4$.
Then, $(g,\omega)$, given in the coordinates $\rho_1,\rho_2,t_1,t_2$ by the matrices 
$$
g=\left(\begin{array}{cccc}
\frac{\rho_1-\rho_2}{\Theta(\rho_1)}&0&0&0\\
0&\frac{\rho_2-\rho_1}{\Theta(\rho_2)}&0&0\\
0&0&\frac{\Theta(\rho_1)}{\rho_1-\rho_2}
+\frac{\Theta(\rho_2)}{\rho_2-\rho_1}&\frac{\rho_2 \Theta(\rho_1)}{\rho_1-\rho_2}+\frac{\rho_1\Theta(\rho_2)}{\rho_2-\rho_1}\vspace{1mm}\\
0&0&\frac{\rho_2 \Theta(\rho_1)}{\rho_1-\rho_2}+\frac{\rho_1\Theta(\rho_2)}{\rho_2-\rho_1}&\frac{\rho_2^2\Theta(\rho_1)}{\rho_1-\rho_2}+\frac{\rho_1^2\Theta(\rho_2)}{\rho_2-\rho_1}
\end{array}\right),
$$
$$
\omega=\d (\rho_1+\rho_2)\wedge \d t_1 +\d(\rho_1\rho_2)\wedge \d t_2
=\left(
\begin{array}{cccc}
0&0&1&\rho_2\\
0&0&1&\rho_1\\
-1&-1&0&0\\
-\rho_2&-\rho_2&0&0
\end{array}
\right),
$$
is a Bochner-flat (pseudo-)K\"ahler structure on the open subset of $\R^4$ where $\rho_1\neq \rho_2$ and
$\rho_1,\rho_2\notin\mathrm{Roots}(\Theta)$.
\end{exmp}
Note that in the previous example, the symmetric space $(S,g_\c,\omega_\c,A_\c)=\mathcal{S}_{g_0,J_0,A_0,\Theta}$ from Theorem~\ref{thm:symmetricspace} 
does not appear. In the next example, it does appear but it is complex $1$-dimensional and therefore still takes a trivial form
(the hermitian endomorphism $A_0$ is a real multiple of $\Id$ and we put it equal to zero, cf.\ Remark~\ref{rem:initialdata}):

\begin{exmp}[$n=2$, $\ell=1$]
Let $\Theta(t)=C_2t^{3}+C_1t^2+C_0t$ be an arbitrary polynomial of degree $\leq 3$ such that $\Theta(0)=0$ 
and let $(S,g_c,\omega_c)$ be an oriented constant curvature surface. 
Choose a 1-form $\alpha$ on $S$ such that $\d\alpha=-\omega_\c$ and let $\rho,t$ be coordinates on $\R^2$.
Then, $(g,\omega)$ given by 
$$
g=\frac{\d\rho^2}{\Theta(\rho)}+\Theta(\rho)(\d t+\alpha)^2+\rho \,g_\c,
\quad
\omega=\d\rho\wedge(\d t+\alpha)+\rho\,\omega_\c
$$
is a Bochner-flat (pseudo-)K\"ahler structure on the open subset of $\R^2\times S$ where $\rho\neq 0$ and $\rho\notin \mathrm{Roots}(\Theta)$. 
For writing it down more explicitly in terms of $4\times 4$ matrices, choose coordinates $x_1,x_2$ on $S$ 
such that $\alpha=\alpha_1\d x_1+\alpha_2\d x_2$ (for certain functions $\alpha_1,\alpha_2$ on $S$)
and consider the components $(g_\c)_{ij}=g_\c(\D_{x_i},\D_{x_j})$ and $(\omega_\c)_{12}=\omega_\c(\D_{x_1},\D_{x_2})$ 
of $g_\c$ and $\omega_\c$ respectively. 
Then, $(g,\omega)$ in the coordinates $\rho,t,x_1,x_2$ takes the form
$$
g=\left(\begin{array}{cccc}
\frac{1}{\Theta(\rho)}&0&0&0\\
0&\Theta(\rho)&\alpha_1\Theta(\rho)&\alpha_2\Theta(\rho)\\
0&\alpha_1\Theta(\rho)&\alpha_1^2\Theta(\rho)+\rho\,(g_\c)_{11}&\alpha_1\alpha_2\Theta(\rho)+\rho\,(g_\c)_{12}\\
0&\alpha_2\Theta(\rho)&\alpha_1\alpha_2\Theta(\rho)+\rho\,(g_\c)_{12}&\alpha_2^2\Theta(\rho)+\rho\,(g_\c)_{22}
\end{array}\right),
$$
$$
\omega=\left(\begin{array}{cccc}
0&1&\alpha_1&\alpha_2\\
-1&0&0&0\\
-\alpha_1&0&0&\rho(\omega_\c)_{12}\\
-\alpha_2&0&-\rho(\omega_\c)_{12}&0
\end{array}\right).
$$
\end{exmp}
The simplest example in which the symmetric spaces $\mathcal{S}_{g_0,J_0,A_0,\Theta}$ from Theorem~\ref{thm:symmetricspace} 
enter in a nontrivial way is the following:

\begin{exmp}[$n=3$, $\ell=1$]
As before let $\rho,t$ be coordinates on $\R^2$. 
We use the symmetric space $(S=\R^4,g_\c,\omega_\c,A_\c)=\mathcal{S}_{g_0,J_0,A_0,\Theta}$ from formulas~\eqref{eq:ex_symmetric} and~\eqref{eq:ex_symmetric2}
of Example~\ref{ex:symmetric} as part of the data in Theorem~\ref{thm:Bochner0} and obtain the $6$-dimensional Bochner-flat (pseudo-)K\"ahler structure
$$
g=\frac{\d \rho^2}{\rho^3} +\rho^3 (\d t+\alpha)^2+ g_\c((A_\c-\rho\Id) \cdot , \cdot),\quad
\omega=\d \rho \wedge (\d t+\alpha) + \omega_\c((A_\c-\rho\Id)\cdot , \cdot)
$$
on the open subset of $\R^2\times S$ where $\rho\neq 0$. The 1-form $\alpha$ is given by~\eqref{eq:alpha}.
\end{exmp}

\subsection{Previous results and methods}
\label{ssec:methods}

Almost all previously known results on Bochner-flat metrics (cf.\ for instance \cite{Ejiri,Kami2,Kami,Matsumoto1,Matsumoto0,Tachibana1,Tachibana0}) 
have been generalized or rediscovered in \cite{Bryant}. Among the already mentioned local description, the article
\cite{Bryant} also contains a classification of complete Bochner-flat metrics and, in particular, a description of the compact case: 
it turns out that compact Bochner-flat manifolds are locally symmetric. This statement also follows immediately from combining the local classification results from~\cite{ApostolovI} 
with the compactification conditions obtained in~\cite{ApostolovII}.
The result that compact Bochner-flat manifolds are locally symmetric appeared first in~\cite{Kami} but its proof turned out to be incomplete, cf.\ \cite{Kami2}. 
The proof in~\cite{Kami} was based on the fact that all Bochner-flat metrics can be obtained (locally) by taking the quotient of a flat (as a Cartan geometry) CR-structure 
w.r.t.\ the flow of a CR-vector field which is transversal to the Levi distribution. This observation is originally due to~\cite{Webster} (see also~\cite{David,DavidGauduchon,PanakSchwachhoefer})  
and this point of view was also taken up in~\cite{CahSchwachhoefer} from a more general perspective to prove that also compact Bochner-flat (pseudo-)K\"ahler manifolds are locally symmetric. 
Theorem~\ref{thm:Bochner0} provides a local description of Bochner-flat (pseudo-)K\"ahler manifolds on neighbourhoods of generic points which are characterized by some regularity condition, 
cf.\ Remark~\ref{rem:generic}. It would still be very interesting to obtain a description of all complete Bochner-flat (pseudo-)K\"ahler manifolds and to extend the local description to singular points.

Our approach towards Bochner-flat (pseudo-)K\"ahler metrics is different from that taken in 
\cite{Bryant}. The latter was based on the analysis of the exterior differential system describing a Bochner-flat K\"ahler structure whilst we will follow the approach taken in \cite{ApostolovI}. 
The authors of \cite{ApostolovI} derived local normal forms for K\"ahler metrics with a so-called hamiltonian 2-form
(see \cite[Equation (1)]{ApostolovI} for the defining equation) and observed in \cite[\S 1.3]{ApostolovI} (see also Lemma \ref{lem:Bochnertensor} below), 
that a K\"ahler metric is weakly Bochner-flat, i.e, its Bochner-tensor has vanishing divergence, if and only if the so-called normalized Ricci form is a hamiltonian 2-form. 
Subsequently, they found the additional conditions on the parameters of their normal forms such that the metric is weakly Bochner-flat resp.\ Bochner-flat.
Our approach in the (pseudo-)K\"ahler setting is the same, although, our language will be slightly different: instead of a $J$-invariant 2-form
$\varphi$, we consider the associated hermitian (i.e., $g$-symmetric, $J$-commuting) endomorphism $A$ 
defined by $\varphi=g(JA\cdot,\cdot)$. The 2-form $\varphi$ is hamiltonian (i.e., satisfies \cite[Equation (1)]{ApostolovI}) 
if and only if $A$ satisfies the linear first order partial differential equation
\begin{equation}\label{eq:main}
\nabla_u A=u^\flat\otimes \Lambda+\Lambda^\flat\otimes u+(Ju)^\flat\otimes J\Lambda+(J\Lambda)^\flat\otimes Ju
\end{equation}
for all $u\in TM$, where $u^\flat=g(u,\cdot)$, $\Lambda=\tfrac{1}{4}\gr\,\tr(A)$ and $\nabla$ denotes the Levi-Civita connection of $g$. 
The solutions of \eqref{eq:main} have a nice geometric interpretation which in the past motivated us and many other authors to
study them without even knowing about the equivalent (and much younger) theory of hamiltonian 2-forms, 
see \cite{FKMR,MatRos,YanoObata,Mikes,Yanobook} and references therein: nondegenerate hermitian solutions of \eqref{eq:main} are in one-one
correspondence to metrics $\hat g$ which are c-projectively equivalent to $g$. Recall that two (pseudo-)K\"ahler metrics $g,\hat g$
on a complex manifold $(M,J)$ are called \emph{c-projectively equivalent} if their $J$-planar curves coincide. These curves can be viewed as some kind
of generalized complex geodesics, see \cite[Equation (1.1)]{BMR} for a precise definition. For the explicit relation between solutions $A$ of \eqref{eq:main}
and metrics $\hat g$, c-projectively equivalent to $g$, see \cite[Equations (1.2), (1.3)]{BMR}.
Independently of this and with different motivation, the theory of hamiltonian 2-forms was developed locally and globally in \cite{ApostolovI,ApostolovII}.
In the more recent articles \cite{BMR,BMMR,CEMN,CMR} ideas of both theories have been combined.

\medskip
In \cite{BMR} we locally classified c-projectively equivalent (pseudo-)K\"ahler metrics generalizing the results of~\cite{ApostolovI}
to the case of arbitrary signature: we derived normal forms for a (pseudo-)K\"ahler manifold 
(locally in a neighbourhood of a generic point) which admits a hermitian solution of~\eqref{eq:main}. 
This result will be recalled in formulas \eqref{eq:gjwrhocoord} and \eqref{eq:Arhocoord} of Theorem~\ref{thm:localclass}. 
In Lemma~\ref{lem:Bochnertensor} we recall the fundamental observation made in \cite[\S 1.3]{ApostolovI} 
that the normalized Ricci tensor (see the defining Equation~\eqref{eq:normRic}) is a solution of~\eqref{eq:main}
if and only if the (pseudo-)K\"ahler structure is weakly Bochner-flat.
We then continue to derive the additional conditions on the parameters of the normal forms~\eqref{eq:gjwrhocoord} 
such that the (pseudo-)K\"ahler structure becomes K\"ahler-Einstein resp.\ weakly-Bochner flat. 
This is the content of Theorem~\ref{thm:WBF}. The description of weakly Bochner-flat (pseudo-)K\"ahler metrics from Theorem~\ref{thm:WBF} is reduced 
to (pseudo-)K\"ahler metrics with parallel Ricci tensor (as studied for instance in~\cite{Boubel3}).
Of course, by the DeRham decomposition theorem~\cite{deRham}, such metrics in case they are positive definite just decompose (locally) into a direct product of K\"ahler-Einstein metrics. 
In arbitrary signature however, they may be indecomposable and non-K\"ahler-Einstein, cf.\ Remark \ref{rem:nonEinstein}. 
The articles \cite{Boubel2,Boubel} contain a description of (pseudo-)Riemannian metrics with parallel endomorphisms but applying this to obtain a 
description of (pseudo-)K\"ahler metrics with parallel Ricci tensor does not seem to be straight-forward.

\medskip
The language which is necessary for the precise description of the algebraic curvature tensors of the form $R_{\Theta,A_0}$
appearing in Theorems~\ref{thm:symmetricspace} and~\ref{thm:Bochner0} will be introduced in~\S\ref{subsec:algprelim2}. 
In that section we will also prove Theorem~\ref{thm:symmetricspace} which in fact is a simple algebraic observation (cf.\ Proposition~\ref{prop:curvminpol}(c)).
The ideas for dealing with the curvature operators ${\mathsf R}_{\Theta,A_0}:\mathfrak{u}(g_0,J_0)\rightarrow \mathfrak{u}(g_0,J_0)$ 
already appeared in previous works: we will see that they are solutions of an equation on operators 
on the Lie algebra $\mathfrak{u}(g_0,J_0)$, see Equations~\eqref{eq:secop0} and~\eqref{eq:secop}. Remarkably Equation~\eqref{eq:secop0}, 
which makes sense for an arbitrary semisimple Lie algebra $\mathfrak{g}$, 
occurs in several a priori unrelated fields: it first appeared in the theory of finite-dimensional integrable systems on Lie algebras~\cite{BFom70,Man,Mish}
but has also applications to (real) projective geometry~\cite[\S3]{Fubini}, c-projective geometry~\cite[Appendix A]{BMR} and to the construction of certain 
(pseudo-)Riemannian holonomy algebras~\cite{BolTsonev}. The interplay between solutions of~\eqref{eq:secop} and symmetric spaces, as discovered in this article, 
will be explored in more detail in forthcoming articles.

\medskip
Our main achievement in extending the results of~\cite{ApostolovI,Bryant} to the (pseudo-)K\"ahler case
is that we were able to deal with the Jordan blocks of the normalized Ricci tensor. 
As stated above, this tensor gives a solution of~\eqref{eq:main} if and only if the (pseudo-)K\"ahler manifold is weakly Bochner-flat. 
We showed in~\cite{BMR} that nontrivial Jordan blocks of solutions of~\eqref{eq:main} corresponding to nonconstant eigenvalues
(when the eigenvalues are considered as functions on the manifold) cannot appear -- 
this was one of the key ingredients in~\cite{BMR} for extending the local classification results from~\cite{ApostolovI} to arbitrary signature.
On the other hand, in the Bochner-flat case, the normalized Ricci tensor in general will have Jordan blocks corresponding to constant eigenvalues
which give rise to the special class of symmetric spaces from Theorem~\ref{thm:symmetricspace}.
After having described the weakly Bochner-flat metrics among the metrics from~\eqref{eq:gjwrhocoord}
(Theorem~\ref{thm:WBF}), we continue with deriving the conditions under which these metrics are actually Bochner-flat. 
This is the content of Theorem~\ref{thm:Bochner} and Theorem~\ref{thm:Bochner0} will follow from that.

\section{Background material, additional results and precise formulations}

\subsection{Description of c-projectively equivalent (pseudo-)K\"ahler metrics}

The local description of Bochner-flat (pseudo-)K\"ahler metrics from Theorem~\ref{thm:Bochner0} follows from Theorem~\ref{thm:Bochner}.
The latter is based on the local description \cite[Example 5 and Theorem 1.6]{BMR} of $2n$-dimensional 
(pseudo-)K\"ahler manifolds $(M,g,J)$ which admit a hermitian solution $A$ of~\eqref{eq:main}. 
We will recall this description in Theorem~\ref{thm:localclass} below in slightly different notation and coordinates
compared to~\cite[Example 5]{BMR}. This reformulation can also be found in \cite[Theorem 5.21]{CEMN}. 
We start by explaining how Theorem~\ref{thm:localclass} is obtained from \cite[Example 5 and Theorem 1.6]{BMR}.
The reader not familiar with the results of~\cite{BMR} may ignore the next few lines and can directly jump to 
Theorem~\ref{thm:localclass} and the formulas~\eqref{eq:gjwrhocoord} and~\eqref{eq:Arhocoord} therein.
A geometric interpretation of these formulas is given in Remark~\ref{rem:naturalcoord}.
As in \cite{ApostolovI} we will use the nonconstant eigenvalues $\rho_1,\dots,\rho_\ell$ of $A$ as 
coordinates instead of the $z_1,\dots,z_r,x_{r+1},\dots,x_{r+q}$ that were used in \cite[Example 5]{BMR}. 
Compared to~\cite[Example 5]{BMR}, we have also changed the numbering of the $\rho_i$'s to make our notation more transparent: 
the set $\{\rho_1,\dots,\rho_\ell\}$ contains \emph{all} nonconstant 
eigenvalues of $A$, that is, this time complex conjugate eigenvalues are counted separately. If $\rho_i$ is real, we define 
$$
\Theta_i(\rho_i):=\varepsilon_i\left(\frac{\D \rho_i}{\D x_i}\right).
$$
If $\rho_i,\rho_j=\bar\rho_i$ is a pair of complex conjugate eigenvalues, we define
$$
\Theta_i(\rho_i):=-4\left(\frac{\D \rho_i}{\D z_i}\right)^2\mbox{ and }\Theta_j(\rho_j):=\overline{\Theta_i(\rho_i)}.
$$
Note that $\Theta_i(z)$ is a holomorphic function of $z$, i.e., $\tfrac{\D}{\D \bar z}\Theta_i(z)=0$. 
The local classification \cite[Example 5 and Theorem 1.6]{BMR} now takes the following equivalent form:

\begin{thm}[Description of c-projectively equivalent metrics]\label{thm:localclass}
Let $n\geq 2$, $0\leq \ell\leq n$ and consider open subsets $U,V\subseteq \R^{\ell}$. Let $t_1,\dots,t_\ell$ be real coordinates on $V$
and let $\rho_1,\dots,\rho_\ell$ be coordinates on $U$. We allow $\rho_i$ to be complex in which case it
occurs together with its complex conjugate $\rho_j=\bar{\rho}_i$ for some $j\neq i$. 
Consider the following data:
\begin{itemize}
\item For each $i=1,\dots,\ell$ a function $\Theta_i$ of one real (resp.\ complex) variable which is defined and nowhere zero
on the image of $\rho_i$ and is a smooth (resp.\ holomorphic) function of $\rho_i$ if $\rho_i$ is real (resp.\ complex). 
Moreover, for a complex conjugate pair $\rho_j=\bar{\rho_i}$ we require $\Theta_j(\rho_j)=\overline{\Theta_i(\rho_i)}$. 
\item A $2k=2n-2\ell$-dimensional (pseudo-)K\"ahler manifold $(S,g_\c,J_\c,\omega_\c)$ 
with parallel hermitian endomorphism $A_\c$. We write $J_\c=\sum_{r,s=1}^{2k}(J_\c)^r_s \D_{x_r}\otimes \d x_s$ and
$A_\c=\sum_{r,s=1}^{2k}(A_\c)^r_s \D_{x_r}\otimes \d x_s$ in (arbitrary) local coordinates $x_1,\dots,x_{2k}$ on $S$.
\end{itemize}
Given this, define 
\begin{itemize}
\item $p_\nc(t)=\prod_{i=1}^\ell(t-\rho_i)$ and $\Delta_i=p_\nc'(\rho_i)=\prod_{j\neq i}(\rho_i-\rho_j)$ (where $p_\nc'(t)=\tfrac{\d}{\d t}p_\nc(t)$).
\item functions $\mu_i$, $i=1,\dots,\ell$, and $\mu_i(\hat{\rho}_j)$, $i=1,\dots,\ell-1$, $j=1,\dots,\ell$, which are the $i$th elementary symmetric 
polynomials in the variables $\rho_1,\dots,\rho_\ell$ resp.\ $\rho_1,\dots,\hat{\rho}_j\dots,\rho_\ell$ ($\rho_j$ omitted),
\item 1-forms $\theta_i=\d t_i+\alpha_i$ on $U\times V\times S$, $i=1,\dots,\ell$, 
where $\alpha_i=\sum_{r=1}^{2k}\alpha_{ir}\d x_r$ are 1-forms on $S$ such that 
$$
\d \alpha_i=(-1)^i\omega_\c(A_\c^{\ell-i}\cdot,\cdot).
$$
(Note that for each $i$, the right-hand side is a closed 2-form on $S$.)
\end{itemize}
Then, we obtain a (pseudo-)K\"ahler structure $(g,J,\omega)$ on the open subset 
$$
\{(\vec{\rho},\vec{t},x)\in U\times V\times S:\rho_i\neq \rho_j\,\,\forall i\neq j\,\mbox{ and }\,\rho_i\notin\mathrm{Spec}(A_\c)\,\,\forall i\}
$$ 
of $U\times V\times S$ which is given by the formulas
\begin{equation}\label{eq:gjwrhocoord}
\begin{array}{c}
\displaystyle g=\sum_{i=1}^\ell \frac{\Delta_i}{\Theta_i(\rho_i)} \d \rho_i^2
+\sum_{i,j=1}^\ell\left[\sum_{s=1}^\ell\frac{\mu_{i-1}(\hat{\rho}_s)\mu_{j-1}(\hat{\rho}_s)}{\Delta_s}\Theta_s(\rho_s)\right]\theta_i\theta_j
+ g_\c(p_\nc(A_\c) \cdot , \cdot),\vspace{1mm}\\
\displaystyle \d \rho_i\circ J=-\frac{\Theta_i(\rho_i)}{\Delta_i}\sum_{j=1}^\ell\mu_{j-1}(\hat{\rho_i})\theta_j,\quad
\theta_i\circ J=(-1)^{i-1}\sum_{j=1}^\ell \frac{\rho_{j}^{\ell-i}}{\Theta_j(\rho_j)}\d \rho_j,\quad \d x_i\circ J=\d x_i\circ J_\c,
\vspace{1mm}\\
\displaystyle \omega=\sum_{i=1}^\ell \d \mu_i \wedge \theta_i + \omega_\c(p_\nc(A_c)\cdot , \cdot).
\end{array}
\end{equation}
Moreover, the endomorphism $A$ given by
\begin{equation}\label{eq:Arhocoord}
\displaystyle A=\sum_{i,j=1}^\ell\left(\mu_i\delta_{1j}-\delta_{i(j-1)}\right)\theta_i\otimes \frac{\D }{\D t_j}
+\sum_{i=1}^\ell \rho_i \d \rho_i\otimes \frac{\D}{\D \rho_i}\vspace{1mm}\\
\displaystyle
+\sum_{r,s=1}^{2k}    (A_\c)^r_s  \,  \d x_r \otimes \left( \frac{\D}{\D x_s}  - \sum_{i=1}^\ell\alpha_{is} \frac{\D}{\D t_i}   \right) 
\end{equation}
is a hermitian solution of \eqref{eq:main} with nonconstant eigenvalues $\rho_1,\dots,\rho_\ell$ and constant eigenvalues $\mathrm{Spec}(A_\c)$.

Conversely, every $2n$-dimensional (pseudo-)K\"ahler manifold $(M,g,J,\omega)$ with hermitian solution $A$ of \eqref{eq:main}
takes locally the form \eqref{eq:gjwrhocoord} and \eqref{eq:Arhocoord} in a neighbourhood of a generic point.
\end{thm}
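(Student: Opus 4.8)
The plan is to obtain Theorem~\ref{thm:localclass} directly from the local classification \cite[Example 5 and Theorem 1.6]{BMR}, which describes the very same class of (pseudo-)K\"ahler manifolds --- namely those admitting a hermitian solution of \eqref{eq:main} --- but in coordinates $z_1,\dots,z_r,x_{r+1},\dots,x_{r+q}$ adapted to the complex, resp.\ real, nonconstant eigenvalues of $A$, together with the same parallel data $(S,g_\c,J_\c,\omega_\c,A_\c)$ on the ``constant-eigenvalue'' factor. Since the geometric content is already contained in \cite{BMR}, the proof is a change of variables together with a bookkeeping of notation and signs, and I would present it as an explicit computation rather than a new geometric argument.

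First I would fix, on the image of each nonconstant eigenvalue, the substitution introducing $\rho_i$ as a new coordinate. For a real eigenvalue one takes $\rho_i=\rho_i(x_i)$ and uses $\Theta_i(\rho_i)=\varepsilon_i\,\partial\rho_i/\partial x_i$ to rewrite $\d x_i$ in terms of $\d\rho_i$; for a complex conjugate pair one takes $\rho_i=\rho_i(z_i)$, $\rho_j=\bar\rho_i=\overline{\rho_i(z_i)}$, and uses $\Theta_i(\rho_i)=-4(\partial\rho_i/\partial z_i)^2$. One checks that this substitution is admissible precisely on the locus where $\Theta_i$ is defined and nowhere zero, so that it is a local diffeomorphism (resp.\ biholomorphism), and that $\partial_{\bar z}\Theta_i=0$ together with $\Theta_j=\overline{\Theta_i}$ makes the resulting metric real and the $J$-formulas consistent. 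Substituting into the formulas of \cite[Example 5]{BMR} and collecting terms --- using the Vandermonde-type identities between the $\mu_i(\hat\rho_s)$, the $\Delta_s$ and the powers $\rho_s^{\ell-i}$ that already underlie the original normal form --- one arrives at \eqref{eq:gjwrhocoord} and \eqref{eq:Arhocoord}; the renumbering, in which complex conjugate eigenvalues are now counted separately so that the index set has size $\ell$, is a pure relabelling.

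For the converse I would invoke the converse direction of \cite[Theorem 1.6]{BMR}: near a generic point, any $2n$-dimensional (pseudo-)K\"ahler manifold carrying a hermitian solution $A$ of \eqref{eq:main} is of the form described there. Here ``generic'' means a point where the number of distinct eigenvalues of $A$ is locally constant, the nonconstant eigenvalues have nowhere-vanishing differentials, and the Jordan type of $A$ is locally constant; recall from \cite{BMR} that nontrivial Jordan blocks for nonconstant eigenvalues cannot occur, so the $\rho$-part of $A$ is automatically diagonalisable and all Jordan blocks sit inside the parallel endomorphism $A_\c$ on $S$. In the adapted coordinates of \cite[Example 5]{BMR} one then runs the change of variables above in reverse; the functions $\Theta_i$ are \emph{defined} by the displayed formulas in terms of $\partial\rho_i/\partial x_i$, resp.\ $\partial\rho_i/\partial z_i$, and the regularity assumption guarantees they are nowhere zero.

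The step I expect to require the most care is checking that the substitution is globally consistent along a complex conjugate pair: one must verify that holomorphy of $\rho_i(z_i)$ forces $\Theta_i$ holomorphic with $\Theta_j=\overline{\Theta_i}$, that the two blocks of the metric coming from $\rho_i$ and $\rho_j=\bar\rho_i$ assemble into a real quadratic form, and that the complex structure produced by the substitution agrees with the one in \cite[Example 5]{BMR} --- i.e.\ that the formulas $\d\rho_i\circ J$ and $\theta_i\circ J$ in \eqref{eq:gjwrhocoord} come out with the correct powers $\rho_j^{\ell-i}$ and signs $(-1)^{i-1}$. Everything else is routine symmetric-function manipulation, but it is easy to make a sign or an indexing error in this last part.
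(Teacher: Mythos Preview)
Your proposal is correct and follows essentially the same approach as the paper: the theorem is stated as a reformulation of \cite[Example 5 and Theorem 1.6]{BMR} obtained by using the nonconstant eigenvalues $\rho_i$ as coordinates in place of the $z_i,x_i$, with the functions $\Theta_i$ defined exactly as you write (via $\varepsilon_i\,\partial\rho_i/\partial x_i$ in the real case and $-4(\partial\rho_i/\partial z_i)^2$ in the complex case), and counting complex conjugate eigenvalues separately so that the index runs from $1$ to $\ell$. The paper is in fact even more terse than your outline --- it simply records the substitution and asserts equivalence, pointing also to \cite[Theorem 5.21]{CEMN} for the same reformulation --- so the careful checks you flag (reality for conjugate pairs, consistency of the $J$-formulas) are left implicit there as well.
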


\begin{rem}[Generic points]\label{rem:generic}
Let us be more precise about the notion of a ``generic point'' which occurred at the end of Theorem~\ref{thm:localclass}:
let $(M,g,J)$ be a connected (pseudo-)K\"ahler manifold and let $A$ be a hermitian solution  
of~\eqref{eq:main}. We call a point $p\in M$ \emph{generic}, if in a neighbourhood of this point the number of different eigenvalues of $A$ is constant 
(which implies that the eigenvalues are smooth functions on some neighbourhood of $p$), and 
for each eigenvalue $\rho$ either $\d\rho\ne 0$ at $p$, or $\rho$ is constant on a neighbourhood of $p$.   
By definition, the  set $M^0$ of generic points  is open and dense in $M$.
By \cite[Lemma 2.2(4)]{BMR}, the number of nonconstant eigenvalues of $A$ is the same at each generic point. 
\end{rem}

\begin{rem}[Decomposing the ``constant block'']
By the deRham-Wu decomposition theorem~\cite{deRham,Wu}, we can write
\begin{equation}\label{eq:deRhamdecomp}
(S,g_c,\omega_c)=(S_1,g_1,\omega_1)\times\dots\times(S_N,g_N,\omega_N)
,
\quad A_\c=\sum_{\gamma=1}^N A_\gamma,
\end{equation}
where $A_\gamma$ is a $g_\gamma$-parallel hermitian endomorphism on the (pseudo-)K\"ahler manifold $(S_\gamma,g_\gamma,\omega_\gamma)$ 
whose spectrum consists either of one single real eigenvalue $c_\gamma$ (in which case the complex dimension of $S_\gamma$ equals
the multiplicity of $c_\gamma$ in the characteristic polynomial $p_\c(t)=\det_\C(t\cdot\Id-A_\c)$ of $A_\c$ considered as complex endomorphism) 
or of two complex conjugate eigenvalues $c_\gamma,\bar{c}_\gamma$ (in which case the complex dimension of $S_\gamma$ equals two times the multiplicity of $c_\gamma$ in $p_\c(t)$).
\end{rem}

\begin{rem}[The coordinates are natural]\label{rem:naturalcoord}
We see that the vector fields $\D/\D t_i$ from Theorem~\ref{thm:localclass} are mutually commuting holomorphic Killing vector fields for $(g,J,\omega)$
from \eqref{eq:gjwrhocoord}: they preserve the metric and the complex structure and, hence, also the K\"ahler form.
The existence of these Killing vector fields has been the key tool for obtaining the local classification of hamiltonian 2-forms 
resp.\ c-projectively equivalent metrics, cf.\ \cite[Proposition 3]{ApostolovI} resp.\ \cite[Lemma 2.1]{BMR},
and also for the classification of Bochner-flat K\"ahler metrics \cite[Theorem 3.11]{Bryant}.
Note that each $\D/\D t_i$ is a globally defined hamiltonian vector field with hamiltonian function $\mu_i$
(where $\mu_i$ is the $i$th elementary symmetric polynomial in the nonconstant eigenvalues $\rho_1,\dots,\rho_\ell$ of $A$, hence, a globally defined function on $M$): 
by \eqref{eq:gjwrhocoord} we have $i_{\D/\D t_i}\omega=-\d\mu_i$. 
Also we see from~\eqref{eq:Arhocoord} that the (generalized) $\rho_i$-eigenspace of $A$ is spanned by 
$\D/\D \rho_i,J(\D/\D \rho_i)$. In particular, $A$ has no Jordan blocks corresponding to nonconstant eigenvalues 
(as predicted by the general theory, cf.\ \cite[Lemma 2.2]{BMR}). From~\eqref{eq:gjwrhocoord} we also see that $\D/\D \rho_i=\frac{\Delta_i}{\Theta(\rho_i)}\gr\,\rho_i$.
Thus, the $\rho_i$-eigenspace of a solution $A$ of~\eqref{eq:main} is spanned by the vector fields $\gr\,\rho_i$ and $J\gr\,\rho_i$.
The eigenvalues $\mathrm{Spec}(A_\c)$ of $A_\c$ are the constant eigenvalues of $A$. Indeed, if $u\in TS$ is contained in the generalized eigenspace of $A_\c$ 
corresponding to some $\xi\in \mathrm{Spec}(A_\c)$, it ``lifts'' to an element $u-\sum_{i=1}^\ell \alpha_i(u)\D/\D t_i$ contained in the generalized $\xi$-eigenspace of $A$. 
In particular, the characteristic polynomial of $A$ considered as complex endomorphism splits into the product of $p_\nc(t)=\prod_{i=1}^\ell(t-\rho_i)$ 
with the characteristic polynomial $p_\c(t)=\det_\C(t\cdot \Id-A_\c)$ of $A_\c$ considered as complex endomorphism:
$$
\mathrm{det}_\C(t\cdot \Id-A)=p_\nc(t)p_\c(t).
$$
We also see that the parameters $\Theta_i$ appearing in~\eqref{eq:gjwrhocoord} satisfy $\Theta_i(\rho_i)=\Delta_ig(\gr\,\rho_i,\gr\,\rho_i)$, 
hence, on the image of $\rho_i$, $\Theta_i$ is determined by the eigenvalue structure of the solution $A$ of~\eqref{eq:main} given by~\eqref{eq:Arhocoord}.
Finally, the (pseudo-)K\"ahler manifold $(S,g_\c(p_\nc(A_\c)\cdot,\cdot),J_\c)$ is the (local) K\"ahler quotient of $(M,g,J)$ 
w.r.t.\ the hamiltonian action of (the (pseudo-)group generated by) the hamiltonian Killing vector fields $\D/\D t_1,\dots,\D/\D t_\ell$ 
with moment map $\vec\mu=(\mu_1,\dots,\mu_\ell):M\rightarrow \R^\ell$. As a complex manifold, $(S,J_\c)$ is the (local) quotient of $(M,J)$ 
w.r.t.\ the holomorphic action of (the (pseudo-)group generated by) the holomorphic vector fields $\D/\D t_i,J(\D/\D t_i)$, $i=1,\dots,\ell$,
see \ \cite[\S 3.1]{ApostolovI} and \cite[\S 3.1]{BMR}. 
\end{rem}

\begin{rem}[Relation to the results of \cite{ApostolovI}]
The main result of \cite{ApostolovI} describes the positive definite K\"ahler structures admitting a hamiltonian 2-form.
Such a 2-form is given by $\phi=g(JA\cdot,\cdot)$, where $A$ is a hermitian solution of \eqref{eq:main}.
Contracting $A$ from \eqref{eq:Arhocoord} with $g$ from \eqref{eq:gjwrhocoord} shows 
$$
g(A\cdot,\cdot)=\sum_{i=1}^\ell\rho_i\frac{\Delta_i}{\Theta_i(\rho_i)}\d\rho_i^2
+\sum_{i,j=1}^\ell\left(\sum_{s=1}^\ell\rho_s\frac{\mu_{i-1}(\hat{\rho}_s)\mu_{j-1}(\hat{\rho}_s)}{\Delta_s}\Theta_s(\rho_s)\right)\theta_i\theta_j
+g_\c(p_\nc(A_\c)A_\c\cdot,\cdot).
$$
Contracting this with $J$ from \eqref{eq:gjwrhocoord}, we get 
\begin{equation}\label{eq:hamrhocoord}
\phi=\sum_{i,j=1}^\ell\rho_i\mu_{j-1}(\hat{\rho}_i)\d\rho_i\wedge \theta_j+\omega_\c(p_\nc(A_\c)A_\c\cdot,\cdot)
=-\sum_{i=1}^\ell\rho_i\frac{\Delta_i}{\Theta_i(\rho_i)}\d\rho_i\wedge \d\rho_i\circ J+\omega_\c(p_\nc(A_\c)A_\c\cdot,\cdot).
\end{equation}
The formulas \eqref{eq:gjwrhocoord} and \eqref{eq:hamrhocoord} look almost identical to the ones from 
\cite[Theorem 1]{ApostolovI}: the main difference (besides notation) 
is that in our case some of the nonconstant eigenvalues $\rho_i$ are complex with corresponding holomorphic 
functions $\Theta_i$ and that the hermitian parallel endomorphism $A_c$ on $(S,g_\c,\omega_\c)$ is in general 
not diagonalizable, that is, $A_c$ may have nontrivial Jordan blocks.
Note that the functions $F_i$ appearing in the explicit formulas from \cite[Theorem 1]{ApostolovI} 
are given by $F_i(t)=\Theta_i(t)p_\c(t)$, where $p_\c(t)=\det_\C(t\cdot \Id-A_\c)$ is the characteristic polynomial of $A_\c$ considered as complex endomorphism.
\end{rem}

\subsection{The Bochner tensor and c-projectively equivalent (pseudo-)K\"ahler metrics}
\label{subsec:Bochnertensor}

For $(0,2)$-tensors $A,B\in \bigotimes^2\mathbb{V}^*$ on some vector space $\mathbb V$, the Kulkarni-Nomizu product 
$A\owedge B\in \bigotimes^4\mathbb{V}^*$ is the $(0,4)$-tensor defined by
$$
(A\owedge B)(x,y,u,v)=A(x,u)B(y,v)-A(x,v)B(y,u)+B(x,u)A(y,v)-B(x,v)A(y,u)
$$
for $x,y,u,v\in \mathbb{V}$. We also define the symmetric product $A\odot B=A\otimes B+B\otimes A$ of $(0,2)$-tensors $A,B$.

Now consider a (pseudo-)K\"ahler structure $(g,J)$ of real dimension $2n$ with K\"ahler form $\omega=g(J\cdot,\cdot)$. 
We will view the curvature tensor $R$ of $g$ either as a $(1,3)$-tensor via $R(x,y)z=(\nabla_{x}\nabla_{y}-\nabla_{y}\nabla_{x}-\nabla_{[x,y]})z$ 
or as a $(0,4)$-tensor given by $R(x,y,u,v)=g(x,R(u,v)y)$. The Ricci tensor $\Ric$, the Ricci form $r$ and the scalar curvature $\Scal$
are given by $\Ric(x,y)=\tr(z\longmapsto R(z,x)y)$, $r(x,y)=\mathrm{Ric}(Jx,y)$ and $\Scal=\tr(\Ric)$ respectively. In the formula for $\Scal$
we adopt the convention to identify a hermitian (i.e., symmetric, $J$-invariant) $(0,2)$-tensor $A$ with the corresponding hermitian (i.e., $g$-symmetric, $J$-commuting) 
endomorphism (both denoted by the same symbol) via $A(x,y)=g(x,Ay)$. The Bochner tensor $\mathrm{Boch}$ is defined by

\begin{equation}\label{eq:Bochner}
\begin{array}{c}
\displaystyle R=\mathrm{Boch}+\frac{1}{2(n+2)}[g\owedge \mathrm{Ric}+\omega\owedge r
+2\omega \odot r]
-\frac{\mathrm{Scal}}{8(n+1)(n+2)}[g\owedge g+\omega\owedge \omega+2\omega\odot \omega].
\end{array}
\end{equation}
Note that in order to obtain the decomposition of $R$ into irreducible components w.r.t.\ the action of the unitary group $\mathrm{U}(g,J)$,
we have to replace $\Ric$ in~\eqref{eq:Bochner} by its trace-free part $\Ric_0=\Ric-\tfrac{\Scal}{2n}g$.
The divergence of a $(0,s+1)$-tensor field $T$ is defined to be the $(0,s)$-tensor field $\mathrm{Div}(T)$
which, in abstract index notation, is given by $\mathrm{Div}(T)_{i_1\dots i_s}=g^{ij}\nabla_iT_{ji_1\dots i_s}$.
A (pseudo-)K\"ahler structure is called \emph{weakly Bochner-flat} if $\mathrm{Div}(\mathrm{Boch})=0$. It is called \emph{Bochner-flat}
if $\mathrm{Boch}=0$. 

Let us define the \emph{normalized Ricci tensor} $\tilde{\mathrm{Ric}}$ by
\begin{equation}\label{eq:normRic}
\tilde{\mathrm{Ric}}=\mathrm{Ric}-\frac{\mathrm{Scal}}{2(n+1)}g.
\end{equation}
The following statement was proven in \cite[\S 1.3]{ApostolovI} for positive definite $g$. 
Its proof carries over without any change to the case of arbitrary signature (the computation of the divergence of~\eqref{eq:Bochner}
only involves contractions and some standard identities for the K\"ahler curvature tensor which hold in any signature).
We refer also to \cite[\S 2.3 and Equation (2.14)]{Bryant} for the corresponding statement in the Bochner-flat case.

\begin{lem}\label{lem:Bochnertensor}\cite{ApostolovI}
A (pseudo-)K\"ahler structure $(g,J,\omega)$ is weakly Bochner-flat if and only if 
the normalized Ricci tensor~\eqref{eq:normRic} is a solution of Equation~\eqref{eq:main}.
\end{lem}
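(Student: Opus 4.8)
The plan is to prove the equivalence by a direct computation of the divergence of \eqref{eq:Bochner}; this is purely local and uses only contractions of the curvature tensor, the contracted second Bianchi identity, the closedness of the Ricci form and the identity $\Div(\Ric)=\tfrac12\d\Scal$, all of which are valid in arbitrary signature. Before carrying it out it is convenient to rephrase the right-hand side of the claim: for $A=\wt{\Ric}$ one has $\tr(A)=\tfrac{\Scal}{n+1}$, hence $\Lambda=\tfrac14\gr\,\tr(A)=\tfrac{1}{4(n+1)}\gr\,\Scal$, so that $\d\Scal=4(n+1)\Lambda^\flat$; moreover the $J$-invariant $2$-form $\varphi=g(JA\cdot,\cdot)$ associated with $A$ is precisely the normalized Ricci form $\tilde r:=r-\tfrac{\Scal}{2(n+1)}\omega$. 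By the dictionary recalled in \S\ref{ssec:methods}, $A=\wt{\Ric}$ solves \eqref{eq:main} if and only if $\tilde r$ is a hamiltonian $2$-form. Since $r$ and $\omega$ are closed, $\tilde r$ is closed, and a hamiltonian $2$-form is automatically closed; hence the only further content of the hamiltonian condition is the vanishing of a single tensor $\mathcal T$, namely the $\mathrm{U}(g,J)$-component of $\nabla\tilde r$ (equivalently of $\nabla\wt{\Ric}$) complementary to the four rank-one terms on the right-hand side of \eqref{eq:main}.

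The core step is the computation of $\Div(\mathrm{Boch})$. Writing $\mathrm{Boch}=R-Q$, where $Q$ is the algebraic combination of $g,\omega,\Ric,r$ and $\Scal$ occurring in \eqref{eq:Bochner}, I would take the divergence of both sides. For $\Div(R)$ I would use the contracted second Bianchi identity, which expresses it through $\nabla\Ric$ (a ``Cotton-type'' tensor, antisymmetric in the derivative index and one curvature index) and, via $\nabla J=0$, its $J$-twisted companion. For $\Div(Q)$, since $\nabla g=\nabla\omega=0$, only the factors $\Ric,r,\Scal$ get differentiated, so each of the six Kulkarni--Nomizu resp.\ symmetric-product summands of $Q$ contributes a combination of: uncontracted $\nabla\Ric$ (or its $J$-twist), $\Div(\Ric)=\tfrac12\d\Scal$, and $\d\Scal$ itself; these six divergences I would evaluate one by one. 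Subtracting and simplifying, the purely scalar ($\d\Scal$) contributions should cancel, and the remaining terms --- after passing to $\wt{\Ric}$ and using $\d\Scal=4(n+1)\Lambda^\flat$ --- should assemble into $\Div(\mathrm{Boch})=c\,\mathcal T'$, where $\mathcal T'$ is the antisymmetrization of $\mathcal T$ in the derivative index and one curvature index and $c$ is a numerical constant, nonzero for $n\geq 2$.

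Finally one has to check that $\mathcal T'=0$ is equivalent to $\mathcal T=0$. One direction is immediate. For the converse one invokes once more the closedness of $\tilde r$: the identity $\d\tilde r=0$ annihilates exactly those $\mathrm{U}(g,J)$-components of $\nabla\tilde r$ that are \emph{not} recorded by $\mathcal T'$, so that $\mathcal T'=0$ together with $\d\tilde r=0$ forces $\nabla\wt{\Ric}$ into the hamiltonian form \eqref{eq:main}. Since $c\neq0$, the lemma follows in both directions. I expect the main obstacle to be the bookkeeping in the middle step: the sum $\Div(R)-\Div(Q)$ produces many terms whose coefficients must cancel delicately --- this is precisely the computation performed for positive definite $g$ in \cite[\S 1.3]{ApostolovI}, which, as the remark preceding the lemma already points out, carries over unchanged to arbitrary signature --- and one must also keep careful track, in the last step, of which irreducible components of $\nabla\tilde r$ are governed by $\d\tilde r=0$ and which by $\Div(\mathrm{Boch})$.
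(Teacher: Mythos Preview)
Your proposal is correct and takes essentially the same approach as the paper: the paper does not give an independent proof but simply cites \cite[\S 1.3]{ApostolovI} and remarks that the computation of the divergence of~\eqref{eq:Bochner}---using only contractions, the contracted second Bianchi identity, and standard K\"ahler identities---carries over unchanged to arbitrary signature. Your outline is precisely a sketch of that computation.
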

Lemma~\ref{lem:Bochnertensor} is the key which allows to apply the description of (pseudo-)K\"ahler manifolds
with hermitian solution of~\eqref{eq:main} from Theorem~\ref{thm:localclass} to the weakly Bochner-flat and, in particular, to the Bochner-flat case.

\subsection{K\"ahler-Einstein and weakly Bochner-flat metrics}

The next theorem will be proven in \S\ref{sec:WBF}. It is the first step in the proof of Theorem~\ref{thm:Bochner0}.

\begin{thm}[Weakly Bochner-flat (pseudo-)K\"ahler structures]\label{thm:WBF}
The (pseudo-)K\"ahler structure $(g,J,\omega)$ from~\eqref{eq:gjwrhocoord} is weakly-Bochner flat if
\begin{enumerate}
\item[(a)] $H(t)=\Theta_j'(t) + \Theta_j(t) \frac{p'_\c(t)}{p_\c(t)}$ is a polynomial of degree $\leq \ell+1$ independent of $j$,
where $p_\c(t)=\det_\C(t\cdot\Id-A_\c)$ is the characteristic polynomial of $A_\c$ considered as complex endomorphism 
(and $\Theta_j'(t)=\tfrac{\d}{\d t}\Theta_j(t)$, $p_\c'(t)=\tfrac{\d}{\d t}p_\c(t)$), and
\item[(b)] $\mathrm{Ric}_\c=-\frac{1}{2}g_\c(H(A_\c)\cdot,\cdot)$, where $\mathrm{Ric}_\c$ is the Ricci tensor of $(g_\c,\omega_\c)$.
\end{enumerate} 
Moreover, the (pseudo-)K\"ahler structure $(g,J,\omega)$ from~\eqref{eq:gjwrhocoord} is K\"ahler-Einstein if and only if in addition to (a) and (b) the polynomial
$H(t)$ has degree $\leq \ell$ and it is Ricci-flat if and only if the degree of $H(t)$ is $\leq \ell-1$.

Conversely, every $2n$-dimensional weakly Bochner-flat (pseudo-)K\"ahler manifold $(M,g,J,\omega)$ takes locally, in a neighbourhood of a generic point, 
the form~\eqref{eq:gjwrhocoord} with (a) and (b) satisfied.
\end{thm}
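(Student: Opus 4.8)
The plan is to apply Lemma~\ref{lem:Bochnertensor}, which turns the problem into the equivalence, for a (pseudo-)K\"ahler structure of the form~\eqref{eq:gjwrhocoord}, between weak Bochner-flatness and the normalized Ricci tensor $\tilde{\Ric}$ from~\eqref{eq:normRic} (viewed as a hermitian endomorphism) being a solution of~\eqref{eq:main}. The first and most laborious step is then to compute $\Ric$, hence $\tilde{\Ric}$, for the metric~\eqref{eq:gjwrhocoord}. One computes this either directly from the coordinate expression of~\eqref{eq:gjwrhocoord} --- the block structure ($\d\rho_i^2$-block diagonal, $t_i$-block constant along the $t$-directions, $S$-block pulled back) keeps the Christoffel bookkeeping manageable --- or via the O'Neill/K\"ahler-reduction formulas for the local K\"ahler quotient $M\to S$ over $(S,g_\c(p_\nc(A_\c)\cdot,\cdot),J_\c)$ described in Remark~\ref{rem:naturalcoord}, in which $\Ric$ is assembled from the base Ricci tensor $\Ric_\c$, the one-dimensional pieces $\tfrac{\Delta_i}{\Theta_i(\rho_i)}\d\rho_i^2$ carrying the nonconstant eigenvalues, and the curvatures $\d\theta_i=(-1)^i\omega_\c(A_\c^{\ell-i}\cdot,\cdot)$ of the connection $\theta$. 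The $\rho_i,t_i$-block is treated exactly as in~\cite{ApostolovI}; after simplification using the standard identities among the $\mu_{i}(\hat\rho_s)$, the $\Delta_s$ and $p_\nc$, this yields a closed formula for $\tilde{\Ric}$ which is block-diagonal with respect to the splitting of $TM$ into the $\rho_i$-planes $\langle\D/\D\rho_i,\,J\,\D/\D\rho_i\rangle$ and the lift of $TS$.

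The second step is to determine precisely when this $\tilde{\Ric}$ solves~\eqref{eq:main}. Substituting the explicit expression into~\eqref{eq:main} --- equivalently, recognising $\tilde{\Ric}$ as an instance of~\eqref{eq:Arhocoord}, which is consistent with the block form just found since solutions of~\eqref{eq:main} commute with the solution $A$ from~\eqref{eq:Arhocoord} (cf.~\cite{BMR}) --- the conditions arising from the $\rho_i$-planes collapse to the requirement that $H(t)=\Theta_j'(t)+\Theta_j(t)\tfrac{p_\c'(t)}{p_\c(t)}$ be a polynomial of degree $\le\ell+1$ independent of $j$, which is~(a), while the conditions arising from the $S$-block become $\Ric_\c=-\tfrac12 g_\c(H(A_\c)\cdot,\cdot)$, which is~(b). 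This gives the full equivalence, for metrics already in the form~\eqref{eq:gjwrhocoord}, between weak Bochner-flatness and [(a) and (b)]. The ``Conversely'' clause then follows: if $(M,g,J,\omega)$ is weakly Bochner-flat, Lemma~\ref{lem:Bochnertensor} shows $\tilde{\Ric}$ is a hermitian solution of~\eqref{eq:main}, so by the converse part of Theorem~\ref{thm:localclass} applied with $A=\tilde{\Ric}$ the metric takes the form~\eqref{eq:gjwrhocoord} near a generic point, and the equivalence just established produces~(a) and~(b).

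The remaining assertions are read off from the same explicit formula for $\tilde{\Ric}$. Under~(a) and~(b) the scalar by which $\tilde{\Ric}$ acts on each $\rho_i$-plane is a universal function of $\rho_i$ which reduces to one and the same constant exactly when $\deg H\le\ell$; in that case the formula shows $\tilde{\Ric}$ is a constant multiple of $g$ --- equivalently the structure is K\"ahler--Einstein --- and taking the trace to compute $\Scal$ shows this constant vanishes, i.e. the metric is Ricci-flat, if and only if moreover $\deg H\le\ell-1$.

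I expect the main obstacle to be the Ricci computation of the first step in full generality: in arbitrary signature, with possibly complex-conjugate eigenvalues $\rho_i$ and a non-diagonalizable parallel endomorphism $A_\c$ on $S$ (nontrivial Jordan blocks in the constant block), the bookkeeping --- and in particular checking that every Jordan-block contribution is exactly repackaged into $H(A_\c)$ and $p_\nc(A_\c)$ so that conditions~(a) and~(b) come out clean --- is delicate. A secondary point needing care is the genericity in the converse: one must ensure that $\tilde{\Ric}$ has, on a dense open subset, regular eigenvalue data in the sense of Remark~\ref{rem:generic}, and treat the degenerate Einstein situation in which the parametrization~\eqref{eq:gjwrhocoord} partly collapses.
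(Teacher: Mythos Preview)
Your proposal is correct and follows essentially the same route as the paper: compute $\Ric$ for the metric~\eqref{eq:gjwrhocoord}, then use Lemma~\ref{lem:Bochnertensor} to reduce weak Bochner-flatness to $\tilde\Ric$ being a linear combination of the given solution $A$ and $\Id$, and match this against the explicit formula to extract conditions (a) and (b); the converse then comes from feeding $A=\tilde\Ric$ into Theorem~\ref{thm:localclass}.

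One practical point worth knowing: the Ricci computation you flag as the main obstacle is done in the paper not by Christoffel symbols or O'Neill, but via a Ricci potential. For a K\"ahler metric one has $r=\d\d^{\c}\kappa$, and for~\eqref{eq:gjwrhocoord} one can write down $\kappa=\kappa_\c-\tfrac12\log\bigl|\prod_i\Theta_i(\rho_i)\,p_\c(\rho_i)\bigr|$ directly (cf.~\cite[\S5.1]{ApostolovI}; the derivation is signature-independent). Differentiating this once and composing with $J$ gives the block-diagonal formula for $\Ric$ with essentially no bookkeeping, and the functions $H_i(t)=\Theta_i'(t)+\Theta_i(t)\,p_\c'(t)/p_\c(t)$ appear immediately. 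The Vandermonde-type identities in $\rho_1,\dots,\rho_\ell$ then convert the condition ``$\tilde\Ric$ is an affine function of $A$'' into exactly (a), and the horizontal block yields (b). This shortcut also makes it transparent that the Jordan structure of $A_\c$ causes no extra difficulty: everything is packaged functorially through $p_\c(t)$ and $H(A_\c)$.
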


\begin{rem}[The statement of Theorem~\ref{thm:WBF} for $\ell=0$]
Consider the (pseudo-)K\"ahler structure $(g,J,\omega)$ from~\eqref{eq:gjwrhocoord} for $\ell=0$. 
Note that the condition $\ell=0$ is equivalent to $A$ from~\eqref{eq:Arhocoord} being parallel. 
In this case, $(g,J,\omega,A)$ is just given by the constant block: $g=g_\c$, $J=J_\c$, $\omega=\omega_\c$ and $A=A_\c$.
The first part of Theorem~\ref{thm:WBF} states that for $\ell=0$, $(g,J,\omega)$ from~\eqref{eq:gjwrhocoord} is weakly Bochner-flat if
$\Ric=\Ric_\c$ is parallel. This statement is already implied by Lemma~\ref{lem:Bochnertensor}: if the Ricci-tensor of a (pseudo-)K\"ahler manifold
is parallel, then also the normalized Ricci tensor is parallel and is therefore a solution of Equation~\eqref{eq:main}.

The converse direction of Theorem~\ref{thm:WBF} still remains valid for the description of weakly Bochner-flat manifolds
with parallel (normalized) Ricci tensor, although the statement in this case is empty: obviously, such manifolds 
are given by the formulas~\eqref{eq:gjwrhocoord} with $\ell=0$ and $\Ric_\c$ parallel.
\end{rem}

\begin{rem}[Relation to the results of \cite{ApostolovI}]
This statement generalizes the description of positive definite weakly Bochner-flat metrics from \cite{ApostolovI}:
suppose all eigenvalues $c_1,\dots,c_N$ of $A_\c$ are real and $A_\c$ is diagonalizable.
Decomposing $(g_\c,\omega_\c)$ and $A_\c$ as in \eqref{eq:deRhamdecomp} w.r.t.\ the parallel eigenspace distributions of $A_\c$,
we obtain from Theorem~\ref{thm:WBF} that the Ricci tensor $\mathrm{Ric}_\gamma$ of each component $(g_\gamma,\omega_\gamma)$
satisfies $\mathrm{Ric}_\gamma=-\tfrac{1}{2}H(c_\gamma)g_\gamma$, $1\leq \gamma\leq N$.
Thus, each component $(g_\gamma,\omega_\gamma)$ is K\"ahler-Einstein with scalar curvature $\mathrm{Scal}_\gamma=-n_\gamma H(c_\gamma)$,
where $n_\gamma$ is the multiplicity of $c_\gamma$ in the characteristic polynomial $p_\c(t)$ of $A_\c$.
\end{rem}

\begin{rem}[Indecomposable non-Einstein (pseudo-)K\"ahler metrics with parallel Ricci]\label{rem:nonEinstein}
In contrast to the positive definite case, the constant block $(g_\c,\omega_\c)$ 
in Theorem~\ref{thm:WBF} is in general not a direct product of K\"ahler-Einstein metrics.
For instance, using Theorem~\ref{thm:symmetricspace} and Remark~\ref{rem:symmetricspace}, it is easy to construct
(symmetric) examples of indecomposable non-Einstein (pseudo-)K\"ahler metrics with parallel Ricci tensor:
use as the data in Theorem~\ref{thm:symmetricspace} the vector space $\mathbb V=\R^4$ with (pseudo-)hermitian structure $(g_0,J_0)$
and hermitian endomorphism $A_0$ as in Example~\ref{ex:symmetric} but this time with the quadratic polynomial $\Theta(t)=t^2$. 
The algebraic curvature tensor $R_0=R_{\Theta,A_0}$ (computed from~\eqref{eq:defRThA}) then has Ricci tensor $\Ric(R_0)=-2A_0$ and is not Einstein.
\end{rem}

\subsection{Bochner-flat metrics}
\label{subsec:algprelim}

Let us give a precise definition of the algebraic curvature tensors $R_{\Theta,A_0}$ appearing in Theorem~\ref{thm:symmetricspace}:
consider a (pseudo-)hermitian vector space $(\mathbb{V},g,J)$ of real dimension $2n$ with hermitian form $\omega=g(J\cdot,\cdot)$. 
For hermitian endomorphisms $A,B:\mathbb{V}\rightarrow \mathbb{V}$, we write for short $gA=g(A\cdot,\cdot)$, $gB=g(B\cdot,\cdot)$ and $\omega A=\omega(A\cdot,\cdot)$, 
$\omega B=\omega(B\cdot,\cdot)$ and we define 
\begin{equation}\label{eq:RAB}
R_{A,B}=gA\owedge gB+\omega A\owedge \omega B +2\omega A\odot \omega B\,
\end{equation}
which is an algebraic curvature tensor having K\"ahler symmetry. 
If it is not clear from the context which scalar product (compatible with a fixed complex structure $J$)
enters the construction of the tensors $R_{A,B}$, we will write $R_{g,A,B}$ instead of $R_{A,B}$ to avoid ambiguities.
It can be computed straightforwardly that the Ricci tensor and the scalar curvature corresponding to $R_{A,B}$ are given by 
\begin{equation}\label{eq:RicRAB}
\Ric(R_{A,B})=\tr(A)B+\tr(B)A+2(AB+BA),
\end{equation}
\begin{equation}\label{eq:ScalRAB}
\Scal(R_{A,B})=2\bigl(\tr(A)\tr(B)+2\tr(AB) \bigr).
\end{equation}
Note that for $A=B=\Id$, the tensor $R_{\Id,\Id}$ is the K\"ahler curvature tensor of constant holomorphic sectional curvature 
with scalar curvature $\Scal(R_{\Id,\Id})=8n(n+1)$. For a polynomial $\Theta(t)=\sum_{k=0}^N a_k t^k$ (with real coefficients) and 
a hermitian endomorphism $A:\mathbb V\rightarrow \mathbb V$, we now 
define the algebraic curvature tensor
\begin{equation}
\label{eq:defRThA}
R_{\Theta,A}=-\frac{1}{8}\sum_{k=1}^N a_k \sum_{r+s=k-1}R_{A^r,A^s}.
\end{equation}
Again, if there is some ambiguity in the choice of the scalar product, we write $R_{g,\Theta,A}$ instead of $R_{\Theta,A}$.
We see immediately that if $A=c\cdot\Id$ for some $c\in \R$, then $R_{\Theta,c\cdot\Id}=-\tfrac{1}{8}\Theta'(c)R_{\Id,\Id}$ 
is a tensor of constant holomorphic sectional curvature with scalar curvature $\Scal(R_{\Theta,c\cdot\Id})=-n(n+1)\Theta'(c)$.
We will explain in \S\ref{subsec:algprelim2} that $R_{\Theta,A}$ defined in \eqref{eq:defRThA} indeed arises
from the Riemannian curvature tensor $\tilde{R}_{\Theta,A}$ defined in~\eqref{eq:derivofpol} via projecting the latter onto
the space of K\"ahler curvature tensors. The proof of Theorem~\ref{thm:Bochner0} will be given in \S\ref{sec:Bochner}. 
The statement we will actually prove is Theorem~\ref{thm:Bochner} which in particular implies Theorem~\ref{thm:Bochner0} 
(cf.\ also Remarks~\ref{rem:reltoTHMsymmetricspace} and~\ref{rem:BFsymmetricspace}):

\begin{thm}[Bochner-flat (pseudo-)K\"ahler structures]\label{thm:Bochner}
The (pseudo-)K\"ahler structure $(g,J,\omega)$ from~\eqref{eq:gjwrhocoord} is Bochner-flat if 
\begin{enumerate}
\item[(a)] $\Theta_i(t)=\Theta(t)$ is a polynomial of degree $\leq \ell+2$ independent of $i$,
\item[(b)] $\Theta(A_\c)=0$ and 
\item[(c)] $R_\c=R_{g_\c,\Theta,A_\c}$, where $R_\c$ is the curvature tensor of $(g_\c,\omega_\c)$.
\end{enumerate} 
Under these conditions, when $\Theta(t)=C_2t^{\ell+2}+C_1t^{\ell+1}+\dots$, the curvature
tensor $R$ of $g$ is given by $R=R_{g,q,A}$, where $q(t)=C_2t^2+(C_2\mu_1+C_1)t$ and $A$ is the endomorphism from~\eqref{eq:Arhocoord}. 
Thus, $(g,J,\omega)$ has constant holomorphic sectional curvature with scalar curvature $\Scal=-n(n+1)C_1$ 
if and only if $C_2=0$ and it is flat if and only if in addition $C_1=0$.

Conversely, every $2n$-dimensional Bochner-flat (pseudo-)K\"ahler manifold $(M,g,J,\omega)$ 
takes locally, in a neighbourhood of a generic point, the form~\eqref{eq:gjwrhocoord} with (a)--(c) satisfied.
\end{thm}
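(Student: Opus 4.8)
The plan is to exploit that Bochner-flatness is strictly stronger than weak Bochner-flatness: since $\mathrm{Boch}=0$ implies $\Div(\mathrm{Boch})=0$, the converse part of Theorem~\ref{thm:WBF} shows that every Bochner-flat (pseudo-)K\"ahler manifold is locally of the form~\eqref{eq:gjwrhocoord} with conditions (a)--(b) of Theorem~\ref{thm:WBF} satisfied, which already yields the ``local form'' half of the converse. It then remains to characterise, among such metrics, the Bochner-flat ones, and to prove sufficiency of (a)--(c) together with the formula for $R$. The bookkeeping is organised by the following consequence of~\eqref{eq:Bochner}: substituting $\Ric=\tilde\Ric+\tfrac{\Scal}{2(n+1)}g$ and collecting terms, a $2n$-dimensional (pseudo-)K\"ahler structure is Bochner-flat if and only if
\[
R=\tfrac{1}{2(n+2)}\,R_{g,\Id,\tilde\Ric}+\tfrac{\Scal}{8(n+1)(n+2)}\,R_{g,\Id,\Id},
\]
where $R_{A,B}$ is as in~\eqref{eq:RAB}. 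Conversely, each $R_{g,\Id,B}=g\owedge gB+\omega\owedge\omega B+2\omega\odot\omega B$, after splitting $B$ into trace and trace-free parts, lies in the sum of the Ricci- and scalar-components of the $\mathrm{U}(n)$-decomposition underlying~\eqref{eq:Bochner}, hence has vanishing Bochner tensor; so any $R_{g,P,B}$ with $\deg P\leq 2$ is Bochner-flat, being a combination of $R_{g,\Id,B}$ and $R_{g,\Id,\Id}$ by~\eqref{eq:defRThA}. Thus the whole theorem reduces to computing the curvature of~\eqref{eq:gjwrhocoord} and deciding precisely when it has this algebraic shape.

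For the ``if'' direction I would first check algebraically that (a)--(c) imply conditions (a)--(b) of Theorem~\ref{thm:WBF}: using~\eqref{eq:defRThA}, \eqref{eq:RicRAB}, \eqref{eq:ScalRAB}, the partial-fraction expansion of $p_\c'/p_\c$, Cayley--Hamilton $p_\c(A_\c)=0$ and the hypothesis $\Theta(A_\c)=0$, one obtains $\Ric(R_{g_\c,\Theta,A_\c})=-\tfrac12\,g_\c(H(A_\c)\cdot,\cdot)$ with $H(t)=\Theta'(t)+\Theta(t)p_\c'(t)/p_\c(t)$ a polynomial of degree $\leq\ell+1$, which together with (c) is exactly Theorem~\ref{thm:WBF}(b). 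In particular, by Theorem~\ref{thm:symmetricspace} the constant block $(S,g_\c,J_\c,A_\c)$, having curvature $R_\c=R_{g_\c,\Theta,A_\c}$, is one of the special symmetric spaces $\mathcal S_{g_0,J_0,A_0,\Theta}$, so the Bochner-flat manifold fibers over such a symmetric space -- this is the bridge to Theorem~\ref{thm:Bochner0}. The heart of the proof is then the curvature computation for~\eqref{eq:gjwrhocoord}: work in the frame adapted to the orthogonal splitting $TM=\bigoplus_i\langle\gr\,\rho_i,J\gr\,\rho_i\rangle\oplus(\text{horizontal lift of }TS)$, use that the $\D/\D t_i$ are holomorphic Killing fields with Hamiltonians $\mu_i$ so that the curvature components hitting these directions are governed by their second covariant derivatives (the Kostant-type identity for Killing fields), handle the mixed components via the O'Neill-type identities for the local submersion $M\to U\times S$ whose connection $1$-form $\theta=(\theta_i)$ has curvature fixed by~\eqref{eq:diffalpha}, and feed in $R_\c=R_{g_\c,\Theta,A_\c}$ on the base. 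The outcome to verify is $R=R_{g,q,A}$ with $q(t)=C_2t^2+(C_2\mu_1+C_1)t$ and $A$ from~\eqref{eq:Arhocoord}; Bochner-flatness then follows from the first paragraph. The remaining assertions are read off: $R_{g,q,A}$ has constant holomorphic sectional curvature with $\Scal=-n(n+1)C_1$ (via~\eqref{eq:ScalRAB}) exactly when $C_2=0$, is flat exactly when moreover $C_1=0$, and is locally symmetric exactly when $\ell=0$ (then $A$ is parallel and $R=R_\c$ is symmetric by Theorem~\ref{thm:symmetricspace}) or $C_2=0$, since for $\ell\geq1$, $C_2\neq0$ one has $\nabla_uR=-\tfrac14 C_2 R_{g,\Id,\nabla_uA}-\tfrac18 C_2(\d\mu_1)(u)R_{g,\Id,\Id}\neq0$ for generic $u$ (because $\nabla_uA\not\propto\Id$ by~\eqref{eq:main}). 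A shorter route to $R=R_{g,q,A}$: since $\tilde\Ric=c_1A+c_2\Id$ with constant $c_1,c_2$ (read off from $\Ric(R_{g,q,A})$, $\Scal(R_{g,q,A})$), $\tilde\Ric$ also solves~\eqref{eq:main}, and one can substitute the prolongation of~\eqref{eq:main} -- the integrability identity relating $[R(x,y),A]$ to $\nabla\Lambda$, $\Lambda=\tfrac14\gr\,\tr(A)$ -- together with the spectral structure of $A$ and the symmetric curvature of the base to pin down $R$.

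For the converse within the family, one runs this computation in reverse: taking the curvature of the general metric~\eqref{eq:gjwrhocoord} satisfying Theorem~\ref{thm:WBF}(a)--(b), imposing that its Bochner tensor -- equivalently $R-\tfrac{1}{2(n+2)}R_{g,\Id,\tilde\Ric}-\tfrac{\Scal}{8(n+1)(n+2)}R_{g,\Id,\Id}$ -- vanishes, and matching the $\mathrm{U}(n)$-irreducible components direction by direction. The components along $TS\otimes TS$ force $R_\c=R_{g_\c,\Theta,A_\c}$, i.e.\ (c); the components involving the $\d\rho_i$ force the functions $\Theta_i$ to a common polynomial $\Theta$, i.e.\ (a) -- here one uses that Theorem~\ref{thm:WBF}(a) already writes $\Theta_ip_\c$ as a fixed polynomial plus a constant $b_i$, and Bochner-flatness forces that polynomial to be divisible by $p_\c$ and all the $b_i$ equal; (a) together with (c) then yields $\Theta(A_\c)=0$, i.e.\ (b).

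The step I expect to be the main obstacle is the curvature computation for~\eqref{eq:gjwrhocoord}: it is lengthy, and the real difficulty is to arrange it along the $\mathrm{U}(n)$-irreducible decomposition so that the Bochner component is isolated cleanly rather than buried inside a full Christoffel-symbol expansion; the prolongation/symmetric-space route mitigates but does not remove the need to keep careful track of the Jordan blocks of $A$ inherited from the possibly nondiagonalisable constant block $A_\c$.
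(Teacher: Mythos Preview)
Your overall architecture is correct and matches the paper's: reduce via weak Bochner-flatness to the normal form~\eqref{eq:gjwrhocoord}, observe that Bochner-flatness is equivalent to $R=R_{g,q,A}$ for a quadratic $q$ (this is exactly the paper's Remark~\ref{rem:Bols1}), and then determine when the curvature of~\eqref{eq:gjwrhocoord} has this shape. Where you diverge is in \emph{which} tool you make primary.

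You propose a direct curvature computation via O'Neill-type formulas as the main engine, with the prolongation identity $[R(x,y),A]=4[X,\nabla\Lambda]$ mentioned only as a ``shorter route''. The paper does the opposite: that identity (Lemma~\ref{lem:prolongation}) is the \emph{core} of the argument. From it and Lemma~\ref{lem:curvpol}(b) one gets $R=R_{q,A}+\varphi$ with $[\varphi(u,v),A]=0$, and under weak Bochner-flatness one identifies $\varphi=\mathrm{Boch}$. The paper then computes $\nabla\Lambda$ explicitly (Lemma~\ref{lem:nablalambda}) and shows that $\nabla\Lambda=\tfrac14 q(A)$ for the quadratic $q$ in~\eqref{eq:polynomialBols} is \emph{equivalent} to conditions (a) and (b) (Lemma~\ref{lem:Bols1}). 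Combined with formula~\eqref{eq:prolong}, this immediately gives: Bochner-flat $\Rightarrow$ $\mathsf{Boch}(JA)=0$ $\Rightarrow$ $\nabla\Lambda=\tfrac14 q(A)$ $\Rightarrow$ (a),(b); and conversely (a),(b) $\Rightarrow$ the Bochner tensor satisfies $[\mathrm{Boch}(u,v),A]=0$, hence by Lemma~\ref{lem:curvcommwithA} is purely horizontal. Only \emph{then} is a direct O'Neill computation invoked (Proposition~\ref{prop:Bochner2}), and only for the horizontal block, where it reduces to the clean identity~\eqref{eq:horizBFcond} and the algebraic manipulation~\eqref{eq:firsteq}--\eqref{eq:fortheq} turning $R_{g_H,\Theta/p_\nc,A_\c}$ into $R_{g_\c,\Theta,A_\c}$.

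What this buys: the paper never computes vertical or mixed curvature components of~\eqref{eq:gjwrhocoord} at all. The sectional-operator machinery confines the Bochner tensor to the horizontal directions \emph{before} any Christoffel-type work, so the Jordan-block issue you flag as the main obstacle is absorbed entirely into linear algebra (Lemmas~\ref{lem:curvpol}, \ref{lem:curvcommwithA}) rather than into a frame computation. Your route would work, but is the longer one; and your derivation of (a) in the converse (``$\Theta_i p_\c$ is a fixed polynomial plus $b_i$, Bochner-flatness forces divisibility by $p_\c$ and $b_i$ equal'') is noticeably more delicate than the paper's one-line deduction from $\nabla\Lambda=\tfrac14 q(A)$.
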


\begin{rem}[Relation to the results of \cite{ApostolovI}]\label{rem:posdef}
This statement generalizes the description of positive definite Bochner-flat metrics from \cite{ApostolovI,Bryant}:
if $A_\c$ has only real eigenvalues $c_1,\dots,c_N$ and is diagonalizable, the condition $\Theta(A_\c)=0$ is equivalent to 
$\Theta(c_i)=0$ for $i=1,\dots,N$. 
Moreover, the condition $R_\c=R_{\Theta,A_\c}$ factorizes (cf.\ Proposition~\ref{prop:curvminpol}(a))
w.r.t.\ the decomposition \eqref{eq:deRhamdecomp} such that for each component 
$(g_\gamma,\omega_\gamma)$ of $(g_\c,\omega_\c)$ on which $A_\c=c_\gamma \Id$, it gives
$R_\gamma=-\frac{\Theta'(c_\gamma)}{8}R_{\Id,\Id}$, where $R_\gamma$ is the curvature tensor of $(g_\gamma,\omega_\gamma)$. Thus, 
$(g_\gamma,\omega_\gamma)$ has constant holomorphic sectional curvature with scalar curvature 
equal to $\mathrm{Scal}_\gamma=-\Theta'(c_\gamma)n_\gamma(n_\gamma+1)$ (cf.\ for instance~\eqref{eq:ScalRAB}), 
where $n_\gamma$ is the multiplicity of $c_\gamma$ in the characteristic polynomial $p_\c(t)$ of $A_\c$. 
\end{rem}

\begin{rem}[Relation to the weakly Bochner-flat case]\label{rem:relationtoWBF}
Of course the conditions $\Theta(A_\c)=0$ and $R_\c=R_{\Theta,A_\c}$ from Theorem~\ref{thm:Bochner} imply
condition $\mathrm{Ric}_\c=-\tfrac{1}{2}H(A_\c)$ from Theorem~\ref{thm:WBF} (as it should be since Bochner-flat
implies weakly Bochner-flat). This follows from Proposition~\ref{prop:curvminpol}(b). 
\end{rem}

\begin{rem}[Relation to Theorem~\ref{thm:symmetricspace}]\label{rem:reltoTHMsymmetricspace}
Since $A_\c$ is parallel w.r.t.\ $g_\c$ (cf.\ Theorem~\ref{thm:localclass}) 
and $R_\c=R_{g_\c,\Theta,A_\c}$ for the polynomial $\Theta(t)$ with constant coefficients (cf.\ Theorem~\ref{thm:Bochner}), 
the curvature tensor $R_\c$ of $g_\c$ must be parallel as well, hence, the constant block $(S,g_\c,J_\c,\omega_\c)$ 
is a locally symmetric space. Theorem~\ref{thm:symmetricspace} then
states that there are no more obstructions on $(S,g_\c,J_\c,\omega_\c)$ or $\Theta(t)$ arising from this condition
and therefore $(S,g_\c,J_\c,\omega_\c)$ is indeed of the form $\mathcal{S}_{g_0,J_0,A_0,\Theta}$, where $g_0=g(p),J_0=J(p)$ and $A_0=A(p)$
for some point $p\in S$.
\end{rem}

\begin{rem}[The locally symmetric case]\label{rem:BFsymmetricspace}
Clearly, a Bochner-flat (pseudo-)K\"ahler manifold $(M,g,J,\omega)$ is locally symmetric, that is, the curvature tensor $R$ is parallel,
if and only if its Ricci tensor is parallel, cf.\ Equation~\eqref{eq:Bochner}.
From the formula $R=R_{g,q,A}$ from Theorem~\ref{thm:Bochner} together with \eqref{eq:RicRAB}--\eqref{eq:defRThA},
we compute that the Ricci tensor and scalar curvature of $g$ are given by
$$
\Ric=-\frac{1}{8}[4(n+2)C_2A+2(n+2)C_2\tr(A)\Id+C\cdot\Id],
$$
\begin{equation}\label{eq:Scalg}
\Scal=-\frac{1}{2}(n+1)(n+2)C_2\tr(A)-\frac{n}{4}C=-(n+1)(n+2)C_2\mu_1+\tilde C
\end{equation}
for certain constants $C,\tilde C$. Thus, if $C_2\neq 0$, that is, $(g,J,\omega)$ has nonconstant holomorphic sectional curvature, then,
$(g,J,\omega)$ is locally symmetric if and only if $A$ is parallel. 
On the other hand, a solution $A$ of~\eqref{eq:main} is parallel if and only if $\ell=0$, i.e.,
all of its eigenvalues are constant (indeed, under this condition the vector field $\Lambda$ in~\eqref{eq:main}
vanishes identically). We obtain the statement from Theorem~\ref{thm:Bochner0} that the Bochner-flat (pseudo-)K\"ahler structure $(g,\omega)$
from~\eqref{eq:gwBochner} is locally symmetric if and only if either $C_2=0$ or $\ell=0$.
\end{rem}

\begin{rem}[The scalar curvature]
Suppose a (pseudo-)K\"ahler structure $(g,J,\omega)$ is Bochner-flat and of nonconstant holomorphic sectional curvature. 
From \eqref{eq:Scalg} (with $C_2\neq 0$), we see that the scalar curvature is constant if and only if $\ell=0$, that is,
if and only if $(g,J,\omega)$ is locally symmetric (cf. Remark~\ref{rem:BFsymmetricspace}).
This is the (pseudo-)K\"ahler version of a result from~\cite{Matsumoto0} obtained in the positive definite case. 
\end{rem}

\section{Proof of Theorem \ref{thm:WBF}}
\label{sec:WBF}

\subsection{The Ricci tensor of the (pseudo-)K\"ahler structure from \eqref{eq:gjwrhocoord} }
\label{subsec:Ricci}

Let $r(X,Y)=\mathrm{Ric}(JX,Y)$ be the Ricci form of a K\"ahler structure 
$(g,J,\omega)$. For the K\"ahler structure from \eqref{eq:gjwrhocoord} 
a Ricci potential $\kappa$ such that $\d\d^\c\kappa=r$ (where for a function $f$, we define $\d^\c f=-\d f\circ J$)
can be computed in the same way as for positive definite $g$. This has already been done in \cite[Section 5.1]{ApostolovI}
(again the proof holds in any signature): if $\kappa_\c$ is a Ricci potential for $(g_\c,\omega_\c)$, 
then 
\begin{equation}
\label{eq:Riccipotential}
\kappa=\kappa_\c-\frac{1}{2}\log|\prod_{i=1}^\ell \Theta_i(\rho_i) p_\c (\rho_i)|,
\end{equation}
where $p_\c(t)=\det_\C(t\cdot \Id-A_\c)$. 
From this it is straight-forward to compute $\mathrm{Ric}=\d\d^\c\kappa(\cdot,J\cdot)$: define functions 
$$
H_i(t)=\Theta'_i(t) + \Theta_i(t) \frac{p'_\c(t)}{p_\c(t)}, \quad1\leq i\leq \ell,
$$
where $p'_\c(t)=\tfrac{\d}{\d t}p_\c(t)$. Then we obtain
\begin{equation}\label{eq:Riccirhocoord}
\begin{array}{c}
\displaystyle \mathrm{Ric}=\sum_{i=1}^\ell \lambda_i\frac{\Delta_i}{\Theta_i}\d\rho_i^2 
+\sum_{i,j=1}^\ell \sum_{k=1}^\ell \mu_{i-1}(\hat{\rho_k})\mu_{j-1}(\hat{\rho_k})\frac{\lambda_k \Theta_k}{\Delta_k} 
\theta_i\theta_j\vspace{1mm}\\
\displaystyle+\mathrm{Ric}_\c+\frac{1}{2}\sum_{i=1}^\ell \frac{H_i}{\Delta_i}g_\c\bigl(p_\nc(A_\c)(A_\c-\rho_i\Id)^{-1}\cdot,\cdot\bigr),
\end{array}
\end{equation}
where $\mathrm{Ric}_\c$ is the Ricci tensor of $(g_\c,\omega_\c)$ and 
$$
\lambda_i=-\frac{1}{2}\frac{\D}{\D\rho_i}\sum_{j=1}^\ell\frac{H_j}{\Delta_j}.
$$
We see from \eqref{eq:gjwrhocoord} and \eqref{eq:Riccirhocoord} that $\lambda_i$ is an eigenvalue of $\Ric$ 
(when considered as endomorphism) with corresponding eigenspace $\mathrm{span}\{\D_{\rho_i},J\D_{\rho_i}\}$.

\subsection{Proof of Theorem \ref{thm:WBF}}
\label{subsec:WBF}

Consider a (pseudo-)K\"ahler manifold $(M,g,J,\omega)$ with a hermitian solution $A$ of \eqref{eq:main} having nonconstant eigenvalues 
(possibly complex-valued) $\rho_1,\dots,\rho_\ell$ as in~\eqref{eq:gjwrhocoord} and~\eqref{eq:Arhocoord}. 
Then $(g,J,\omega)$ is weakly Bochner-flat with normalized Ricci tensor $\tilde{\mathrm{Ric}}=\Ric-\tfrac{\Scal}{2(n+1)}\Id$
a (constant coefficient) linear combination of $A$ and $\Id$ if and only if
$$
\mathrm{Ric}=-\frac{c_2}{2} A+\left(\frac{\mathrm{Scal}}{2(n+1)}-\frac{\tilde{c}_1}{2}\right)\Id
$$
for certain $c_2,\tilde c_1\in \R$ (cf.\ Lemma~\ref{lem:Bochnertensor}). Taking the trace yields
$$
\frac{\mathrm{Scal}}{2(n+1)}=-\frac{c_2}{2} \mu_1-\frac{c_2}{4} \tr(A_\c)-\frac{n\tilde{c}_1}{2}.
$$
Inserting this into the formula for $\Ric$ and defining the constant $c_1=\tfrac{c_2}{2}\tr(A_\c)+(n+1)\tilde{c}_1$, we obtain
that $(g,J,\omega)$ is weakly Bochner-flat with normalized Ricci tensor a linear combination of $A$ and $\Id$ if and only if
\begin{equation}\label{eq:WBF}
\mathrm{Ric}=-\frac{1}{2}(c_2 A+(c_2\mu_1+c_1)\Id)
\end{equation}
for certain $c_1,c_2\in \R$. In particular, $(g,J,\omega)$ is K\"ahler-Einstein if and only if $c_2=0$
and $(g,J,\omega)$ is Ricci-flat if and only if in addition $c_1=0$. 
Comparing $A$ from \eqref{eq:Arhocoord} with $\mathrm{Ric}$ from \eqref{eq:Riccirhocoord}, 
the first set of equations we obtain from~\eqref{eq:WBF} is
$$
\frac{\D}{\D\rho_i}\left(\sum_{j=1}^\ell\frac{H_j(\rho_j)}{\Delta_j}\right)
=c_2(2\rho_i+\mu_1(\hat \rho_i))+c_1,\quad 1\leq i\leq \ell,
$$
which after integration (and using $\mu_1^2-\mu_2=\sum_{j=1}^\ell\rho_j^2+\mu_2$) yields

\begin{equation}\label{eq:solutionPDE}
\sum_{j=1}^\ell\frac{H_j(\rho_j)}{\Delta_j}
=c_2\Big( \sum_{j=1}^\ell \rho_j^2+\mu_2\Big)+c_1\mu_1+c_0.
\end{equation}
We recall some standard identities that will be used frequently throughout the article:

\begin{lem}\label{lem:VandId}
Consider free variables $\rho_1,\dots,\rho_\ell$ (which are either real or arise as complex conjugate pairs)
such that $\rho_i\neq \rho_j$ for $i\neq j$. Let $\Delta_i=\prod_{j\neq i}(\rho_i-\rho_j)$
and $p_\nc(t)=\prod_{i=1}^\ell(t-\rho_i)$.
\begin{enumerate}
\item[(a)] We have
\begin{equation}\label{eq:VandId}
\begin{array}{c}
\displaystyle\sum_{j=1}^\ell\frac{\rho_j^{\ell-s}}{\Delta_j}=0\quad(2\leq s\leq \ell),
\quad\sum_{j=1}^\ell\frac{\rho_j^{\ell-1}}{\Delta_j}=1,\quad\sum_{j=1}^\ell\frac{\rho_j^{\ell}}{\Delta_j}=\mu_1,
\quad
\sum_{j=1}^\ell\frac{\rho_j^{\ell+1}}{\Delta_j}=\mu_1^2-\mu_2
\vspace{1mm}\\
\displaystyle
\mbox{and, more generally,}\quad
\sum_{j=1}^\ell\frac{\rho_j^{\ell+m}}{\Delta_j}=\sum_{j_1+\dots+j_\ell=m+1}\rho_1^{j_1}\dots\rho_\ell^{j_\ell}\quad\mbox{for}\quad m\geq -1.
\end{array}
\end{equation}
\item[(b)] Suppose we have $\sum_{i=1}^\ell\tfrac{f_i(\rho_i)}{\Delta_i}=h(\vec{\rho})$, 
where $f_i:\R\rightarrow \R$ are functions of one variable and $h:\R^\ell\rightarrow \R$ 
is a polynomial in $\rho_1,\dots,\rho_\ell$ of degree $\leq k$.  
Then, $f(t)=f_i(t)$ is a polynomial of degree $\leq \ell+k-1$ (independent of $i$). 
\item[(c)] Let $\Theta(t)=C_2t^{\ell+2}+C_1t^{\ell+1}+C_0 t^\ell+\dots$ be a polynomial of degree $\leq \ell+2$.  
Then, we have the identity
\begin{equation}\label{eq:ApplVandIdAppl}
\sum_{i=1}^\ell \frac{\Theta(\rho_i)}{\Delta_i(\rho_i-t)}
=-\frac{\Theta(t)}{p_\nc(t)}+C_2t^2+(C_2\mu_1+C_1)t+C_2(\mu_1^2-\mu_2)+C_1\mu_1+C_0.
\end{equation}
\end{enumerate}
\end{lem}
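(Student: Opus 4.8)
The plan is to treat (a), (b), (c) essentially as consequences of partial-fraction decomposition / Lagrange interpolation, since all three assertions are statements about the expansion of rational functions with simple poles at $t=\rho_1,\dots,\rho_\ell$. For part (a), the cleanest route is to observe that for any polynomial $q(t)$ of degree $\leq \ell-1$ one has the Lagrange interpolation identity $q(t)=\sum_{j=1}^\ell q(\rho_j)\prod_{s\neq j}\frac{t-\rho_s}{\rho_j-\rho_s}=\sum_{j=1}^\ell \frac{q(\rho_j)}{\Delta_j}\cdot\frac{p_\nc(t)}{t-\rho_j}$; comparing leading coefficients (the coefficient of $t^{\ell-1}$ on the right is $\sum_j q(\rho_j)/\Delta_j$) gives $\sum_j \rho_j^m/\Delta_j=0$ for $0\le m\le \ell-2$ and $=1$ for $m=\ell-1$. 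For the higher powers $m\ge \ell$ one divides $t^m$ by $p_\nc(t)$, writes $t^m=p_\nc(t)Q(t)+r(t)$ with $\deg r<\ell$, notes that $\sum_j \rho_j^m/\Delta_j=\sum_j r(\rho_j)/\Delta_j$ equals the coefficient of $t^{\ell-1}$ in $r$, and reads this coefficient off from the Euclidean division; equivalently, and more slickly, one expands the generating function $\sum_{j}\frac{1}{\Delta_j(1-\rho_j u)}=\frac{1}{p_\nc(1/u)u^{\ell-1}}$ in powers of $u$ and recognizes the complete homogeneous symmetric polynomials $h_{m+1}(\rho_1,\dots,\rho_\ell)=\sum_{j_1+\dots+j_\ell=m+1}\rho_1^{j_1}\cdots\rho_\ell^{j_\ell}$ as the coefficients, which is exactly the displayed ``more generally'' formula; the special cases $\mu_1$, $\mu_1^2-\mu_2$ follow since $h_1=\mu_1$ and $h_2=\mu_1^2-\mu_2$.

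For part (b), consider the rational function $\Phi(t)=\sum_{i=1}^\ell \frac{f_i(t_i)}{\Delta_i}$ viewed... more precisely, fix all $\rho_j$ with $j\ne i$ and regard the hypothesis $\sum_i f_i(\rho_i)/\Delta_i=h(\vec\rho)$ as an identity in $\rho_i$. Multiplying by $\Delta_i$ and letting $\rho_i$ vary shows $f_i(\rho_i)=\Delta_i h(\vec\rho)-\Delta_i\sum_{j\ne i}f_j(\rho_j)/\Delta_j$; the right-hand side, after clearing the remaining denominators (which are products of differences $\rho_j-\rho_k$), must be a polynomial in $\rho_i$, and a degree count — $\Delta_i$ contributes degree $\ell-1$ in $\rho_i$ and $h$ contributes degree $\le k$ — forces $f_i$ to be a polynomial of degree $\le \ell+k-1$. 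Independence of $i$ then follows by a symmetry/relabelling argument together with comparing the contributions: subtracting the relation for index $i$ from the relation for index $i'$ and using part~(a) to kill the spurious terms shows $f_i=f_{i'}$ on the overlap of their domains, hence they are the same polynomial. Part (c) is then immediate from (a): expand $\Theta(\rho_i)/(\rho_i-t)$ by writing $\Theta(\rho_i)=\Theta(t)+(\rho_i-t)\,\Psi_t(\rho_i)$ where $\Psi_t(s)=\frac{\Theta(s)-\Theta(t)}{s-t}$ is a polynomial of degree $\le \ell+1$ in $s$ (with $t$ as a parameter), so that
\begin{equation*}
\sum_{i=1}^\ell \frac{\Theta(\rho_i)}{\Delta_i(\rho_i-t)}
=\Theta(t)\sum_{i=1}^\ell\frac{1}{\Delta_i(\rho_i-t)}+\sum_{i=1}^\ell\frac{\Psi_t(\rho_i)}{\Delta_i};
\end{equation*}
the first sum equals $-1/p_\nc(t)$ by the $q\equiv 1$ case of Lagrange interpolation (set $t=t$ in $1=\sum_j \frac{1}{\Delta_j}\frac{p_\nc(t)}{t-\rho_j}$ and solve), and the second sum is evaluated by applying part~(a) term by term to the coefficients of $\Psi_t(s)=C_2 s^{\ell+1}+(C_2 t+C_1)s^\ell+(C_2 t^2+C_1 t+C_0)s^{\ell-1}+\cdots$, where only the top three powers $s^{\ell+1},s^\ell,s^{\ell-1}$ survive (the lower ones give zero), producing $C_2(\mu_1^2-\mu_2)+C_2\mu_1 t+C_1\mu_1 + C_2 t^2+C_1 t+C_0$, which rearranges to the claimed right-hand side.

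The main obstacle is the ``independent of $i$'' clause in part (b): the degree bound is a routine count, but establishing that the various $f_i$ coincide as polynomials — given that a priori they are only defined on the possibly-different images of the individual $\rho_i$ and that complex-conjugate pairs occur — requires care in setting up the relabelling argument and in checking that the identity $\sum_i f_i(\rho_i)/\Delta_i=h(\vec\rho)$, which holds as an identity of functions on the open set $\{\rho_i\ne\rho_j\}$, can be promoted to an identity of polynomials to which one may apply differentiation in a single $\rho_i$ while freezing the others. Once that analytic-continuation/polynomial-identity point is made precise, the rest is bookkeeping with symmetric functions.
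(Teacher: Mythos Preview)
Your proposal is correct, but the route you take for parts~(b) and~(c) differs from the paper's.

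For part~(b), the paper does not argue by a direct degree count. Instead it establishes the differential identity
\[
\sum_{i=1}^\ell\frac{\partial}{\partial\rho_i}\sum_{j=1}^\ell\frac{f_j(\rho_j)}{\Delta_j}
=\sum_{j=1}^\ell\frac{f_j'(\rho_j)}{\Delta_j},
\]
and then applies the operator $\bigl(\sum_i\partial_{\rho_i}\bigr)^{k+1}$ to both sides of the hypothesis: the right-hand side dies because $h$ has total degree $\le k$, and one is reduced to the homogeneous case $\sum_j f_j^{(k+1)}(\rho_j)/\Delta_j=0$, which is then dispatched by a citation. Your degree-count argument also works, but two points deserve tightening: first, the term $\Delta_i\sum_{j\ne i}f_j(\rho_j)/\Delta_j$ is \emph{already} a polynomial of degree $\ell-2$ in $\rho_i$ (since $\Delta_i/\Delta_j=-\prod_{s\ne i,j}(\rho_i-\rho_s)/\prod_{s\ne i,j}(\rho_j-\rho_s)$), so no ``clearing of remaining denominators'' is needed, and this is exactly what makes the degree bound come out right; second, your relabelling sketch for $f_i=f_{i'}$ is cleanest if you clear the full Vandermonde $\prod_{a<b}(\rho_a-\rho_b)$ to obtain a polynomial identity, then set $\rho_i=\rho_{i'}$. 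The paper's differential trick is more elegant because it reduces in one stroke to the case $h=0$, but it too leaves the final independence step to an external reference.

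For part~(c), your divided-difference decomposition $\Theta(\rho_i)=\Theta(t)+(\rho_i-t)\Psi_t(\rho_i)$ is correct and your coefficient computation checks out. The paper instead applies part~(a) with $\ell+1$ variables $\rho_1,\dots,\rho_\ell,t$: since $\Theta$ has degree $\le(\ell+1)+1$, the identities for $m=0,1$ give the result in one line, with $\tilde\mu_1=\mu_1+t$ and $\tilde\mu_1^2-\tilde\mu_2=t^2+\mu_1 t+(\mu_1^2-\mu_2)$. This is a bit slicker but buys nothing beyond brevity; your approach has the advantage of being self-contained and exhibiting the mechanism (only the top three coefficients of $\Psi_t$ survive) explicitly.
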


\begin{proof}
Part (a) is taken from \cite[Appendix B]{ApostolovI}. To show part (b) we first notice that 
\begin{equation}
\label{eq:frombols1}
\sum_{i=1}^\ell\frac{\D}{\D\rho_i}\sum_{j=1}^\ell\frac{f_j(\rho_j)}{\Delta_j}=\sum_{j=1}^\ell\frac{f_j'(\rho_j)}{\Delta_j}.
\end{equation}
Iteratively we conclude
$$
\sum_{i_1,\dots,i_{k+1}=1}^\ell\frac{\D}{\D\rho_{i_1}}\dots \frac{\D}{\D\rho_{i_{k+1}}}
\sum_{j=1}^\ell\frac{f_j(\rho_j)}{\Delta_j}=\sum_{j=1}^\ell\frac{f_j^{(k+1)}(\rho_j)}{\Delta_j}.
$$
Applying this to $\sum_{i=1}^\ell\tfrac{f_i(\rho_i)}{\Delta_i}=h(\vec{\rho})$ yields $\sum_{i=1}^\ell\tfrac{f_i^{(k+1)}(\rho_i)}{\Delta_i}=0$.
The claim now follows from part (a) or \cite[Lemma 5.12]{BMR}.

\medskip
To show part (c) we apply the Vandermonde identities~\eqref{eq:VandId} 
in the $\ell+1$ variables $\rho_1,\dots,\rho_\ell,t$ to the degree $\leq \ell+2$
polynomial $\Theta(t)$. We obtain
$$
\sum_{i=1}^\ell \frac{\Theta(\rho_i)}{\Delta_i(\rho_i-t)}+\frac{\Theta(t)}{\prod_{i=1}^\ell (t-\rho_i)}=C_2\bigl((\mu_1+t)^2-\mu_1 t-\mu_2\bigr)+C_1(\mu_1+t)+C_0
$$
$$
=C_2t^2+(C_2\mu_1+C_1)t+C_2(\mu_1^2-\mu_2)+C_1\mu_1+C_0
$$
as we claimed.
\end{proof}
Applying the identities \eqref{eq:VandId} to \eqref{eq:solutionPDE}, we see that $H_j(t)-c_2 t^{\ell+1}-c_1t^{\ell}-c_0t^{\ell-1}$ is a polynomial
of degree $\leq \ell-2$ in $t$ (with constant coefficients independent of $j$). We write $H(t)=H_j(t)$ for the polynomial
of degree $\leq \ell+1$.

Now let us turn to the implications for the constant block $(g_\c,\omega_\c)$ of the K\"ahler structure from \eqref{eq:gjwrhocoord}
arising from the weakly Bochner-flat condition~\eqref{eq:WBF}: combining~\eqref{eq:WBF} with \eqref{eq:gjwrhocoord}, \eqref{eq:Arhocoord}
and~\eqref{eq:Riccirhocoord} yields
\begin{equation}\label{eq:horizcomp}
-2g_\c^{-1}\mathrm{Ric}_\c=\sum_{i=1}^\ell \frac{H(\rho_i)}{\Delta_i}p_\nc(A_\c)(A_\c-\rho_i\Id)^{-1}
+c_2 p_\nc(A_\c)A_\c+\Big(c_2\sum_{i=1}^\ell \rho_i+c_1\Big)p_\nc(A_\c).
\end{equation}
Thus, $-2g_\c^{-1}\mathrm{Ric}_\c=p(A_\c)$ for some polynomial $p(t)$ with coefficients that may a priori depend on $\vec\rho$.
To show that the polynomial has actually constant coefficients, fix $\rho_1<\dots<\rho_\ell$ and choose $t\in \R$
such that $t\neq \rho_i$, $1\leq i\leq \ell$. Then we see from \eqref{eq:horizcomp} that 
$$
\frac{p(t)}{p_\nc(t)}=-\sum_{i=1}^\ell \frac{H(\rho_i)}{\Delta_i(\rho_i-t)}
+c_2\Big(\sum_{i=1}^\ell \rho_i+ t\Big)+c_1.
$$
Recall that $H(t)=c_2t^{\ell+1}+c_1t^{\ell}+c_0t^{\ell-1}+\dots$ is a polynomial of degree $\leq \ell+1$.
Then the identities \eqref{eq:VandId} taken in the $\ell+1$ variables $\rho_1,\dots,\rho_\ell,t$ show that
$$
\sum_{i=1}^\ell \frac{H(\rho_i)}{\Delta_i(\rho_i-t)}+\frac{H(t)}{p_\nc(t)}=c_2
\Big(\sum_{i=1}^\ell\rho_i+t\Big)+c_1.
$$
Inserting this into the last equation, we see that $p(t)=H(t)$ holds for all $t\in \R$ 
and therefore \eqref{eq:horizcomp} is equivalent to
$$
\mathrm{Ric}_\c=-\frac{1}{2}g_\c\big(H(A_\c)\cdot,\cdot\big).
$$
This completes the proof of Theorem \ref{thm:WBF}.

\section{Proof of Theorem \ref{thm:Bochner}}
\label{sec:Bochner}

\subsection{Algebraic preliminaries}
\label{subsec:algprelim2}

We continue to work with the notation introduced in \S\ref{subsec:algprelim}.
Let $(\mathbb{V},g,J)$ be a (pseudo-)hermitian vector space with hermitian form $\omega=g(J\cdot,\cdot)$. 
Denote by 
$$
\mathfrak{so}(g)=\big\{X\in \mathfrak{gl}(\mathbb{V}):g(X\cdot,\cdot)=-g(\cdot,X\cdot)\big\},\quad
\mathfrak{u}(g,J)=\big\{X\in \mathfrak{so}(g):XJ=JX\big\}
$$
the Lie algebras of skew-symmetric and skew-hermitian endomorphisms of $\mathbb V$ respectively.
Consider the tensor space $\bigotimes^4 \mathbb{V}^*$ and the symmetry relations
\begin{itemize}
\item[($\mathcal{K}1$)] $R(x,y,u,v)=-R(y,x,u,v)=-R(x,y,v,u)$,
\item[($\mathcal{K}2$)] $R(x,y,u,v)=R(u,v,x,y)$,
\item[($\mathcal{K}3$)] $R(x,y,u,v)+R(x,v,y,u)+R(x,u,v,y)=0$,
\item[($\mathcal{K}4$)] $R(x,y,u,v)=R(Jx,Jy,u,v)=R(x,y,Ju,Jv)$.
\end{itemize}
Let $\mathcal{R}(\mathbb V)$ be the space of algebraic curvature tensors on $\mathbb V$
given by all elements of $\bigotimes^4 \mathbb{V}^*$ satisfying ($\mathcal{K}1$)--($\mathcal{K}3$).
The subspace $\mathcal{K}(\mathbb V)\subseteq \mathcal{R}(\mathbb V)$ of elements which satisfy in addition
($\mathcal{K}4$) are the curvature tensors with K\"ahler symmetry.
Each $R\in \bigotimes^4 \mathbb{V}^*$ satisfying ($\mathcal{K}1$) 
can be identified with a linear operator ${\mathsf R}: \mathfrak{so}(g)\rightarrow \mathfrak{so}(g)$.
Indeed, if we adopt the $(1,3)$-tensor notation for $R$, i.e., $R(x,y,u,v)=g(x,R(u,v)y)$,  
and identify $\Lambda^2 \mathbb V$ with $\mathfrak{so}(g)$ by 
$$
u\wedge v= u^\flat\otimes v-v^\flat\otimes u,
$$
then we can set by definition:
$$
{\mathsf R}(u\wedge v)=2R(u,v).
$$
Similarly,  if we consider generating elements 
$$
u\wedge_J v=u\wedge v+Ju\wedge Jv
$$
of $\mathfrak{u}(g,J)$, then we can view an element 
$R\in \bigotimes^4 \mathbb{V}^*$ satisfying ($\mathcal{K}1$) and ($\mathcal{K}4$)
as a linear operator $\mathsf R: \mathfrak{u}(g,J)\rightarrow \mathfrak{u}(g,J)$, namely: 
$$
{\mathsf R}(u\wedge_J v)=4R(u,v).
$$ 
Recall the formula for the orthogonal projection $\mathcal{R}(\mathbb V)\rightarrow \mathcal{K}(\mathbb V)$
(cf.\ for instance~\cite{Vanhecke}):

\begin{lem}\label{lem:projection}
The linear mapping ${\mathsf{pr}}:\mathcal{R}(\mathbb V)\rightarrow \mathcal{R}(\mathbb V)$,
$$
{\mathsf{pr}}(R)(x,y,u,v)=\frac{1}{16}\Big[3R(x,y,u,v)+3R(Jx,Jy,u,v)+3R(x,y,Ju,Jv)+3R(Jx,Jy,Ju,Jv)
$$
$$
+R(x,Ju,v,Jy)-R(x,v,Jy,Ju)+R(x,Jv,Jy,u)-R(x,u,Jv,Jy)
$$
$$
+R(Jx,u,Jv,y)-R(Jx,Jv,y,u)+R(Jx,v,y,Ju)-R(Jx,Ju,v,y)\Big],
$$
satisfies ${\mathsf{pr}}^2={\mathsf{pr}}$ and ${\mathsf{pr}}(\mathcal{R}(\mathbb V))=\mathcal{K}(\mathbb V)$.
\end{lem}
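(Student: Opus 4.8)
The plan is to verify directly that the explicitly given linear map $\mathsf{pr}$ sends $\mathcal R(\mathbb V)$ into $\mathcal K(\mathbb V)$ and restricts to the identity on $\mathcal K(\mathbb V)$; once these two facts are established, $\mathsf{pr}^2=\mathsf{pr}$ and $\mathsf{pr}(\mathcal R(\mathbb V))=\mathcal K(\mathbb V)$ follow formally (if $\mathsf{pr}(R)\in\mathcal K(\mathbb V)$ for every $R$, then $\mathsf{pr}(\mathsf{pr}(R))=\mathsf{pr}(R)$, and $\mathcal K(\mathbb V)=\mathsf{pr}(\mathcal K(\mathbb V))\subseteq\mathsf{pr}(\mathcal R(\mathbb V))\subseteq\mathcal K(\mathbb V)$). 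It is convenient to split $\mathsf{pr}=\mathsf{pr}_1+\mathsf{pr}_2$, where $\mathsf{pr}_1(R)$ collects the four terms with coefficient $3/16$ in the first line of the formula and $\mathsf{pr}_2(R)$ collects the eight ``mixed'' terms with coefficient $1/16$ in the second and third lines. Introducing the first- and second-pair rotations $\sigma_1 R(x,y,u,v)=R(Jx,Jy,u,v)$ and $\sigma_2 R(x,y,u,v)=R(x,y,Ju,Jv)$, one has $\sigma_i^2=\mathrm{id}$ (since $J^2=-\mathrm{id}$) and $\sigma_1\sigma_2=\sigma_2\sigma_1$, so these generate a group $G\cong\mathbb Z_2\times\mathbb Z_2$ acting on $\bigotimes^4\mathbb V^*$; and $\mathsf{pr}_1=\tfrac34 A$, where $A$ is the average over $G$. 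Note that $(\mathcal K4)$ for a tensor is precisely the condition of being fixed by $\sigma_1$ and $\sigma_2$.

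First I would dispose of the easy symmetries. The operators in $G$ commute with antisymmetrisation in each of the two argument pairs, so $\mathsf{pr}_1(R)$ inherits $(\mathcal K1)$, and a short inspection shows the eight terms of $\mathsf{pr}_2(R)$ are already grouped into differences that are antisymmetric in $\{x,y\}$ and in $\{u,v\}$ separately; using $(\mathcal K2)$ for $R$ one checks that the pair exchange $(x,y)\leftrightarrow(u,v)$ permutes the four terms of $\mathsf{pr}_1(R)$ among themselves and the eight terms of $\mathsf{pr}_2(R)$ among themselves. Hence $\mathsf{pr}(R)$ satisfies $(\mathcal K1)$ and $(\mathcal K2)$. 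For the K\"ahler symmetry $(\mathcal K4)$ of $\mathsf{pr}(R)$ one must show $\sigma_1$ and $\sigma_2$ fix $\mathsf{pr}(R)$: the piece $\mathsf{pr}_1$ is $G$-invariant by construction, and for $\mathsf{pr}_2$ one verifies that applying $\sigma_1$ (resp.\ $\sigma_2$) permutes the eight mixed terms up to sign among themselves --- here one already has to use $(\mathcal K1)$--$(\mathcal K3)$ for $R$, in particular the first Bianchi identity, to re-identify a twisted term with another member of the list.

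The main obstacle is the first Bianchi identity $(\mathcal K3)$ for $\mathsf{pr}(R)$, because this is exactly where $\mathsf{pr}_1$ fails on its own: cycling $y,u,v$ kills $\mathfrak S_{y,u,v}R(x,y,u,v)$ by $(\mathcal K3)$, but $\mathfrak S_{y,u,v}R(Jx,Jy,u,v)$ and its companions do not vanish, since only one of the three cycled slots carries a $J$; the eight mixed terms of $\mathsf{pr}_2$ are precisely the correction that compensates this defect with the matching coefficients $3/16$ and $1/16$. The computation I would carry out is to expand $\mathfrak S_{y,u,v}\mathsf{pr}(R)(x,y,u,v)$, reduce the cyclic sums arising from $\mathsf{pr}_1(R)$ via $(\mathcal K1)$--$(\mathcal K3)$ to a normal form involving only terms $R(Ja,b,c,d)$ with a single $J$, put $\mathfrak S_{y,u,v}\mathsf{pr}_2(R)(x,y,u,v)$ into the same normal form, and check term by term that everything cancels. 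This is a finite, mechanical but delicate bookkeeping exercise, each step using only $J^2=-\mathrm{id}$ and the three curvature symmetries of $R$ (cf.\ the computation in \cite{Vanhecke}). It remains to prove $\mathsf{pr}|_{\mathcal K(\mathbb V)}=\mathrm{id}$: on $\mathcal K(\mathbb V)$ the relation $(\mathcal K4)$ collapses the four terms of $\mathsf{pr}_1(R)$ to $\tfrac34 R$, and using $(\mathcal K4)$ together with the Bianchi identity one collapses the sum of the eight mixed terms to $4R$, so $\mathsf{pr}_2(R)=\tfrac14 R$ and $\mathsf{pr}(R)=R$. Finally, $\mathsf{pr}$ is automatically the orthogonal projection (as recalled before the lemma): every summand of the formula is a permutation of the four tensor slots conjugated by an even number of copies of the $g$-orthogonal map $J$, hence self-adjoint for the $g$-induced inner product on $\bigotimes^4\mathbb V^*$, and the list of summands is stable under taking adjoints; alternatively this is immediate from $\mathrm U(g,J)$-equivariance once the Tricerri--Vanhecke decomposition of $\mathcal R(\mathbb V)$ is invoked.
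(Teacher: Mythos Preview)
Your approach is correct and is exactly the standard direct verification; the paper itself gives no proof at all for this lemma, but simply recalls the formula with a reference to Tricerri--Vanhecke~\cite{Vanhecke}. One small over-statement in your outline: the $(\mathcal K4)$-invariance of $\mathsf{pr}_2(R)$ does \emph{not} require the Bianchi identity --- substituting $x\mapsto Jx$, $y\mapsto Jy$ (resp.\ $u\mapsto Ju$, $v\mapsto Jv$) together with $J^2=-\mathrm{id}$ already permutes the eight mixed terms among themselves (terms 1--4 swap with terms 8--5); the Bianchi identity is genuinely needed only in the $(\mathcal K3)$ verification and in collapsing $\mathsf{pr}_2(R)$ to $\tfrac14 R$ on $\mathcal K(\mathbb V)$.
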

We now use the projection $\mathsf{pr}$  to interpret the curvature tensors \eqref{eq:RAB} and  \eqref{eq:defRThA} defined above in 
\S\ref{subsec:algprelim} as operators on $\mathfrak{u}(g,J)$.
Let $A,B:\mathbb V\rightarrow \mathbb V$ be hermitian endomorphisms. 
It is easy to check that the linear operator ${\mathsf R}_{A,B}: \mathfrak{u}(g,J) \to \mathfrak{u}(g,J)$ 
corresponding to the curvature tensor $R_{A,B}\in \mathcal{K}(\mathbb V)$ defined in \eqref{eq:RAB} is given by
\begin{equation}
\label{eq:RABopK}
{\mathsf R}_{A,B}(X)=-4(AXB +BXA-\tfrac{1}{2}\tr(JBX)JA-\tfrac{1}{2}\tr(JAX)JB),\quad X\in \mathfrak{u}(g,J).
\end{equation}
Moreover, we have
\begin{equation}
\label{eq:pr1}
{\mathsf R}_{A,B}=2\, \mathsf{pr}\left(  \widetilde {\mathsf R}_{A,B}  \right),
\end{equation} 
where $\widetilde {\mathsf R}_{A,B}:  \mathfrak{so}(g) \to \mathfrak{so}(g)$, $\widetilde{\mathsf R}_{A,B} \in \mathcal{R}(\mathbb V)$, is defined by
\begin{equation}
\label{eq:RABtilde}
\widetilde{\mathsf R}_{A,B} (X) = -4 (AXB + BXA),  \quad X\in \mathfrak{so}(g).
\end{equation} 
Now let $q(t)=\sum_{k=0}^N a_k t^k$ be any polynomial. From the definition~\eqref{eq:defRThA} and Equation~\eqref{eq:RABopK}, it follows that 
the linear operator ${\mathsf R}_{q,A}: \mathfrak{u}(g,J) \to \mathfrak{u}(g,J)$ corresponding to
the curvature tensor $R_{q,A}\in \mathcal{K}(\mathbb V)$ takes the form
\begin{equation}
\label{eq:polcurvop}
{\mathsf R}_{q,A} (X) = -\frac{1}{8}\sum_{k=1}^N a_k  \!\! \sum_{r+s=k-1} \!\! {\mathsf R}_{A^r,A^s}(X) = 
\sum_{k=1}^N a_k \!\! \sum_{r+s=k-1} \!\! \big(A^s X A^r  - \tfrac{1}{2}\tr(JA^sX)JA^r\big)
\end{equation}
for $X\in \mathfrak{u}(g,J)$. By \eqref{eq:pr1} we have
\begin{equation}
\label{eq:pr2}
{\mathsf R}_{q,A} = 2\,{\mathsf {pr}}\left(\widetilde{\mathsf R}_{q,A}\right),
\end{equation}
where $\widetilde{\mathsf R}_{q,A} : \mathfrak{so}(g) \to \mathfrak{so}(g)$, $\widetilde{\mathsf R}_{q,A} \in \mathcal{R}(\mathbb V)$, is defined by
\begin{equation}
\label{eq:polcurvopRiem0}
\widetilde{\mathsf R}_{q,A} (X) = -\frac{1}{8}\sum_{k=1}^N a_k\sum_{r+s=k-1}\widetilde{\mathsf R}_{A^r,A^s}(X) = \sum_{k=1}^N a_k \sum_{r+s=k-1} A^s X A^r.
\end{equation}
The operator $\widetilde{\mathsf R}_{q,A}$ can be written more compactly in the form
\begin{equation}
\label{eq:polcurvopRiem}
\widetilde{\mathsf R}_{q,A}(X)=\frac{\d}{\d t}\Big|_{t=0}q(A+tX).
\end{equation}

\begin{rem}[Changing the polynomial]\label{rem:changepol}
The zero-order term $a_0$ of $q(t)$ does not contribute to ${\mathsf R}_{q,A}$ from \eqref{eq:polcurvop}. 
We also see that for any $\lambda\in \C$ (when we complexify $\mathbb V$
and extend all objects complex-linearly), 
${\mathsf R}_{q,A}={\mathsf R}_{\tilde q,A-\lambda\Id}$ for the modified polynomial
$\tilde{q}(t)=q(t+\lambda)$. This follows by applying the projection 
${\mathsf {pr}}:\mathcal R(\mathbb V)\rightarrow \mathcal K(\mathbb V)$
to the obvious identity $\widetilde{{\mathsf R}}_{q,A}=\widetilde{{\mathsf R}}_{\tilde q,A-\lambda\Id}$
for the curvature operators from \eqref{eq:polcurvopRiem}.
\end{rem}

We can generalize the above constructions by considering a rational or real analytic function $f:\R\rightarrow \R$
instead of a polynomial $q(t)$:
for $A:\mathbb V\rightarrow \mathbb V$ hermitian, define an operator $\widetilde{{\mathsf R}}_{f,A}:\mathfrak{so}(g)\rightarrow \mathfrak{so}(g)$
as in \eqref{eq:polcurvopRiem} by
$$
\widetilde{{\mathsf R}}_{f,A}(X)=\frac{\d }{\d t}\Big|_{t=0}f(A+tX).
$$
Since $f$ is real analytic we obtain as above that $\widetilde{{\mathsf R}}_{f,A}\in \mathcal{R}(\mathbb V)$. 
Then, we define 
$$
{\mathsf R}_{f,A}=2\, \mathsf{pr}\left(\widetilde{{\mathsf R}}_{f,A}\right)\in \mathcal{K}(\mathbb V),
$$  
where ${\mathsf{pr}}:\mathcal{R}(\mathbb V)\rightarrow \mathcal{K}(\mathbb V)$ is the projection from 
Lemma~\ref{lem:projection}. 

\begin{exmp}
\label{exmp:main}
Let $f(t)=(\rho-t)^{-1}$ for some $\rho\in \R$.
For $X\in \mathfrak{so}(g)$ we compute
$$
\widetilde{\mathsf R}_{f,A}(X)=\frac{\d}{\d t}\Big|_{t=0}(\rho\Id-A-tX)^{-1}
=(A-\rho\Id)^{-1}X(A-\rho\Id)^{-1}.
$$
Thus, by \eqref{eq:RABtilde} we have $\widetilde{\mathsf R}_{f,A}=-\tfrac{1}{8}\widetilde{\mathsf R}_{(A-\rho\Id)^{-1}, (A-\rho\Id)^{-1}}$
and therefore, by \eqref{eq:pr1}, 
$$
{\mathsf R}_{f,A}=2\, \mathsf{pr}\left(\widetilde{\mathsf R}_{f,A}\right)=-\tfrac{1}{8}{\mathsf R}_{(A-\rho\Id)^{-1},(A-\rho\Id)^{-1}}.
$$
\end{exmp}
If there are any ambiguities in the choice of the scalar product $g$ (compatible with a fixed complex structure $J$),
we write ${\mathsf R}_{g,A,B}$, $\widetilde{{\mathsf R}}_{g,A,B}$, ${\mathsf R}_{g,f,A}$ and $\widetilde{{\mathsf R}}_{g,f,A}$ 
instead of ${\mathsf R}_{A,B}$, $\widetilde{{\mathsf R}}_{A,B}$, ${\mathsf R}_{f,A}$ and $\widetilde{{\mathsf R}}_{f,A}$.
A similar convention is adopted for the corresponding algebraic curvature tensors.

\medskip
We will show next that the operators ${\mathsf R}_{q,A}$ for fixed $A$ arise as solutions of a certain
inhomogeneous linear equation.
The first statement of the next lemma also appeared in \cite[Remark A.1]{BMR}. 
The second statement appeared in the proof of \cite[Proposition A.2]{BMR}.

\begin{lem}\label{lem:curvpol}
Let $(\mathbb V,g,J)$ be a (pseudo-)hermitian vector space
and let $A,B:\mathbb V\rightarrow \mathbb V$ be hermitian endomorphisms.
\begin{enumerate}
\item[(a)] Suppose an operator ${\mathsf R}:\mathfrak{u}(g,J)\rightarrow \mathfrak{u}(g,J)$ satisfies 
\begin{equation}\label{eq:secop0}
[{\mathsf R}(X),A]=[X,B]\quad\mbox{for all}\quad X\in \mathfrak{u}(g,J).
\end{equation}
Then, $B=q(A)$ for a certain polynomial $q(t)$ with real coefficients
(unique up to adding a polynomial $r(t)$ such that $r(A)=0$).
\item[(b)] Let $q(t)$ be a polynomial with real coefficients. Consider the equation
\begin{equation}\label{eq:secop}
[{\mathsf R}(X),A]=[X,q(A)]\quad\mbox{for all}\quad X\in \mathfrak{u}(g,J)
\end{equation}
on operators ${\mathsf R}:\mathfrak{u}(g,J)\rightarrow \mathfrak{u}(g,J)$. A particular solution is given by ${\mathsf R}={\mathsf R}_{q,A}\in \mathcal{K}(\mathbb V)$.
Any solution of \eqref{eq:secop} can be written as ${\mathsf R}_{q,A}+{\upvarphi}$ for an operator ${\upvarphi}:\mathfrak{u}(g,J)\rightarrow \mathfrak{u}(g,J)$
satisfying $[\upvarphi(X),A]=0$ for all $X\in \mathfrak{u}(g,J)$.
\item[(c)]  The Ricci tensor of the operator ${\mathsf R}_{q,A}$ (resp.\ the Ricci tensor of the curvature tensor $R_{q,A}$ corresponding to ${\mathsf R}_{q,A}$)
takes the following explicit form:
\begin{equation}
\label{eq:RicRTheta2}
\Ric \left({\mathsf R}_{q,A}\right) = -\frac{1}{2} \left( q'(A) + \sum_{\lambda_i \in \mathrm{Spec}(A)} q_{\lambda_i} (A)  \right),
\end{equation}
where the polynomial $q_{\lambda_i}(t)$ is defined from the relation $q(t) - q(\lambda_i) = q_{\lambda_i} (t) (t-\lambda_i)$ and $q'(t)=\tfrac{\d}{\d t}q(t)$.
\end{enumerate}
\end{lem}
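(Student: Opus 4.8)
I would treat the three assertions in turn; they are largely independent, and only part~(a) requires a genuine idea.

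For part~(a), the plan is to reformulate the hypothesis \eqref{eq:secop0} as an inclusion of centralizers. All operators in sight ($A$, $B$, $\mathsf R(X)$, and all brackets of these) commute with $J$, so everything happens inside the real algebra $\mathfrak g:=\{X\in\mathfrak{gl}(\mathbb V):XJ=JX\}$ of $\C$-linear endomorphisms; I equip $\mathfrak g$ with the nondegenerate real trace form $\langle X,Y\rangle=\tr(XY)$, with respect to which every $\mathrm{ad}_A=[A,\,\cdot\,]$ is skew-symmetric, whence $\mathrm{im}(\mathrm{ad}_A)^\perp=\ker(\mathrm{ad}_A)=\{A\}'$ (the centralizer of $A$ in $\mathfrak g$), and similarly for $B$. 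The hypothesis reads $\mathrm{ad}_B\big(\mathfrak u(g,J)\big)\subseteq\mathrm{ad}_A\big(\mathfrak u(g,J)\big)$; multiplying by $J$ (which commutes with $A$ and $B$) and using $\mathfrak g=\mathfrak u(g,J)\oplus J\,\mathfrak u(g,J)$ (the splitting into $g$-skew and $g$-symmetric parts) upgrades this to $\mathrm{im}(\mathrm{ad}_B)\subseteq\mathrm{im}(\mathrm{ad}_A)$, hence, taking orthogonal complements, to $\{A\}'\subseteq\{B\}'$, hence $B\in\{A\}''=\C[A]$ (the double commutant of a single endomorphism is its polynomial algebra). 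Finally $\C[A]=\R[A]\oplus J\,\R[A]$, the two summands being the $g$-symmetric resp.\ $g$-skew elements, so the hermitian endomorphism $B$ actually lies in $\R[A]$: thus $B=q(A)$ with $q$ having real coefficients, unique modulo the minimal polynomial of $A$. The reason to argue via (bi)commutants rather than eigenspaces, which is the subtle point, is that in indefinite signature the hermitian operator $A$ need not be semisimple.

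For part~(b), the facts that $\mathsf R_{q,A}$ lies in $\mathcal K(\mathbb V)$ and maps $\mathfrak u(g,J)$ into itself are already recorded in \S\ref{subsec:algprelim2}. To check that it solves \eqref{eq:secop}, I would apply $[\,\cdot\,,A]$ to the explicit formula \eqref{eq:polcurvop}: the terms $-\tfrac12\tr(JA^sX)\,JA^r$ commute with $A$ (since $A$ commutes with $J$ and with its own powers) and drop out, while for each $k$ the sum $\sum_{r+s=k-1}(A^sXA^{r+1}-A^{s+1}XA^r)$ telescopes to $XA^k-A^kX=[X,A^k]$; summing over $k$ with weights $a_k$ produces $[X,q(A)]$. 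The description of the whole solution set is then formal: the difference $\upvarphi$ of two solutions satisfies $[\upvarphi(X),A]=0$ for all $X$, and conversely $\mathsf R_{q,A}+\upvarphi$ solves \eqref{eq:secop} for any $\upvarphi$ with this property.

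For part~(c), I would substitute the Ricci formula \eqref{eq:RicRAB} into the definition \eqref{eq:defRThA}. Since all powers of $A$ commute, $\Ric(R_{A^r,A^s})=\tr(A^r)A^s+\tr(A^s)A^r+4A^{k-1}$ whenever $r+s=k-1$. The $4A^{k-1}$ term occurs for each of the $k$ pairs $(r,s)$ and contributes $-\tfrac18\sum_k a_k\cdot 4kA^{k-1}=-\tfrac12 q'(A)$. For the trace terms, the symmetry $r\leftrightarrow s$ reduces them to $-\tfrac14\sum_k a_k\sum_{r+s=k-1}\tr(A^s)A^r$, and the elementary identity $\tfrac{t^k-\lambda^k}{t-\lambda}=\sum_{r+s=k-1}t^r\lambda^s$ together with $\sum_{\lambda_i}\lambda_i^s=\tfrac12\tr(A^s)$ (the $\lambda_i$ being the eigenvalues of $A$ acting $\C$-linearly, listed with multiplicity, so that the real trace is twice the complex one) rewrites these as $-\tfrac12\sum_{\lambda_i}q_{\lambda_i}(A)$. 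Adding the two contributions gives \eqref{eq:RicRTheta2}. The only thing to watch is the factor~$2$ relating the real trace in \eqref{eq:RicRAB} to the sum of complex eigenvalues; I would cross-check the normalization against the known special case $\Ric(R_{\Theta,c\cdot\Id})=-\tfrac{n+1}{2}\,\Theta'(c)\,\Id$.
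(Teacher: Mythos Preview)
Your proof is correct and follows essentially the same strategy as the paper's: part~(a) via the bicommutant of $A$ using a trace-form argument (the paper works with the complex trace and shows directly that $[B,Y]=0$ for every $Y$ commuting with $A$, while you reach the same inclusion $\{A\}'\subseteq\{B\}'$ via image/kernel duality for the real trace form), and part~(c) via linearity in $q$ together with \eqref{eq:RicRAB} and the identity $\tfrac{t^k-\lambda^k}{t-\lambda}=\sum_{r+s=k-1}t^r\lambda^s$. The only notable variation is in~(b): you verify $[\mathsf R_{q,A}(X),A]=[X,q(A)]$ by telescoping the explicit sum \eqref{eq:polcurvop}, whereas the paper differentiates the identity $[q(A+tX),A+tX]=0$ at $t=0$ to see at once that $\widetilde{\mathsf R}_{q,A}$ solves \eqref{eq:secop}, and then notes that the extra trace terms in $\mathsf R_{q,A}$ commute with $A$.
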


\begin{rem}[Sectional operators]
The commutator relation \eqref{eq:secop0} plays an important role in the theory of 
integrable systems on Lie algebras \cite{BFom70,Man,Mish} -- note that \eqref{eq:secop0} also makes sense 
on an arbitrary semisimple Lie algebra $\mathfrak{g}$. Solutions ${\mathsf R}:\mathfrak{g}\rightarrow \mathfrak{g}$ 
are known as sectional operators. 
The analogue of~\eqref{eq:secop0} for $\mathfrak{g}=\mathfrak{so}(g)$ also appeared in (real) 
projective geometry~\cite[\S 3]{Fubini} and in the construction of certain holonomy algebras for (pseudo-)Riemannian metrics \cite{BolTsonev}.
The properties of solutions of \eqref{eq:secop0} have been applied to c-projective geometry in~\cite[Appendix A]{BMR}
and we will make further use of it in \S\ref{subsec:BochnerflatTheta} and \S\ref{subsec:Bochnerflatconst} below.  
\end{rem}

\begin{proof}[Proof of Lemma \ref{lem:curvpol}]
(a) Consider $\mathbb V$ as a complex vector space (with multiplication $(a+ib)x=ax+bJx$). 
For an arbitrary  complex-linear endomorphism $Y:\mathbb V\rightarrow \mathbb V$ and 
$X\in \mathfrak{u}(g,J)$ we compute
$$
\tr_{\C} ( X \cdot[B, Y]) = \tr_{\C} ( Y\cdot [X, B] ) = \tr_{\C} ( Y\cdot [{\mathsf R}(X), A]) = \tr_{\C} ( {\mathsf R}(X)\cdot[A,Y]),
$$
 where $\tr_\C$ denotes the complex trace. Since $\mathfrak u(g,J)$ spans $\mathfrak {gl}_\C (\mathbb V)$ in the complex
 sense, we conclude that $[B, Y]=0$ for any $Y$ that commutes with $A$. It is a well-known algebraic fact that in this case $B$ 
can be written as a polynomial of $A$. Moreover, as both $A$ and $B$ are hermitian,  this polynomial must have real coefficients.

(b) For fixed $A,q$, \eqref{eq:secop} is an inhomogeneous linear equation on linear operators 
${\mathsf R}:\mathfrak{u}(g,J)\rightarrow \mathfrak{u}(g,J)$. A particular solution is given by
$$
{\mathsf R}_0(X)=\frac{\d}{\d t}\Big|_{t=0}q(A+tX)
$$
which can be seen by taking the $t$-derivative at $t=0$ of the identity $0=[q(A+tX),A+tX]$.
We see that ${\mathsf R}_0=\widetilde{{\mathsf R}}_{q,A}|_{\mathfrak{u}(g,J)}$, where $\widetilde{{\mathsf R}}_{q,A}\in \mathcal{R}(\mathbb V)$
is defined in~\eqref{eq:polcurvopRiem0}.
A comparison with \eqref{eq:polcurvop} shows that ${\mathsf R}_{q,A}(X)$ and ${\mathsf R}_0(X)$
only differ by terms that commute with $A$. Therefore also ${\mathsf R}_{q,A}$ solves \eqref{eq:secop}.
Then any solution of the inhomogeneous equation \eqref{eq:secop} can be written as the sum of 
the particular solution ${\mathsf R}_{q,A}$ and a solution $\upvarphi$ of the homogeneous equation $[\upvarphi(X),A]=0$.

(c)  Since formula \eqref{eq:RicRTheta2} is linear in $q$, it is sufficient to check it for $q(t)=t^k$. Then we have
$$
{\mathsf R}_{t^k, A} = -\frac{1}{8}    \sum_{s+r=k-1} {\mathsf R}_{A^s, A^r} 
$$
and, by using \eqref{eq:RicRAB},
$$
\begin{aligned}
\mathrm{Ric}& \left( {\mathsf R}_{t^k, A}  \right) = -\frac{1}{8}  \sum_{s+r=k-1} \bigl(\tr (A^s) A^r + \tr (A^r) A^s +  4 A^{k-1}\bigr) = \\
& -\frac{1}{4} \Big( \tr (A^0) A^{k-1} + \tr (A) A^{k-2} + \tr (A^2) A^{k-3} + \dots  + \tr (A^{k-1}) A^0\Big) -\frac{1}{2}  k A^{k-1}. \\ 
\end{aligned}
$$
Now taking into account the fact that each eigenvalue $\lambda_i$ of $A$ comes with multiplicity 2,  we can rewrite the above formula as
$$
\begin{aligned}
\mathrm{Ric} \left( {\mathsf R}_{t^k, A}  \right) &= -\frac{1}{2} \sum_{\lambda_i \in \mathrm{Spec}(A)} \Big( A^{k-1} 
+ \lambda_i A^{k-2} + \lambda_i^2 A^{k-3} + \dots  + \lambda_i^{k-1} \Id  \Big) -\frac{1}{2}  k A^{k-1} \\
&= -\frac{1}{2} \sum_{\lambda_i \in \mathrm{Spec}(A)}  q_{\lambda_i}(A) - \frac{1}{2}   q'(A), 
\end{aligned}
$$
where $q(t)=t^k$, as required.
\end{proof}
We have shown that an algebraic curvature operator ${\mathsf R}$ solving~\eqref{eq:secop} is of the form~\eqref{eq:polcurvop} 
up to adding a curvature operator $\upvarphi$ with image in the stabilizer $\mathfrak{g}_A$ of $A$ (of the (adjoint) action of $\mathfrak{u}(g,J)$ on hermitian endomorphisms). 
The form of this subalgebra is very much restricted by the Jordan normal form of $A$:

\begin{lem}
\label{lem:curvcommwithA}
Let $(\mathbb V,g,J)$ be a (pseudo-)hermitian vector space 
and let $A:\mathbb V\rightarrow \mathbb V$ be a hermitian endomorphism. Denote 
the complexification of $\mathbb{V}$ by $\mathbb{V}^\C=\mathbb{V}\otimes_\R \C$
and, for $\lambda\in \mathrm{Spec}(A)$, the generalized $\lambda$-eigenspace of $A$
by $\mathbb{V}_\lambda\subseteq \mathbb{V}^\C$. 
Suppose $\upvarphi\in \mathcal{K}(\mathbb V)$ satisfies
\begin{equation}\label{eq:commwithA0}
[\upvarphi(X),A]=0\quad\mbox{for all}\quad X\in \mathfrak{u}(g,J).
\end{equation}
Then, $\varphi(x,y,u,v)=0$ (we extend $\varphi$ complex-linearly to the complexification $\mathbb V^\C$)
whenever two vectors  are contained in different spaces $\mathbb{V}_\lambda$, $\mathbb{V}_{\lambda'}$,
$\lambda\neq \lambda'$.
\end{lem}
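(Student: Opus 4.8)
The plan is to translate the operator condition \eqref{eq:commwithA0} into the statement that every curvature endomorphism commutes with $A$, then to exploit the $g$-orthogonality of distinct generalized eigenspaces of the symmetric endomorphism $A$, and finally to dispose of the one remaining ``mixed'' configuration by means of the first Bianchi identity $(\mathcal{K}3)$.

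First I would fix notation: write $\varphi(u,v)\colon\mathbb{V}\to\mathbb{V}$ for the curvature endomorphism, so that $\varphi(x,y,u,v)=g(x,\varphi(u,v)y)$ and, under the identification used in the paper, $\upvarphi(u\wedge_J v)=4\varphi(u,v)$. Since the elements $u\wedge_J v$ span $\mathfrak{u}(g,J)$, the hypothesis \eqref{eq:commwithA0} is equivalent to $[\varphi(u,v),A]=0$ for all $u,v\in\mathbb{V}$, and after extending $g$, $A$ and $\varphi$ complex-bilinearly to $\mathbb{V}^\C$ this persists for all $u,v\in\mathbb{V}^\C$. Hence each $\varphi(u,v)$ commutes with $A$, therefore with every polynomial in $A$, and so preserves each generalized eigenspace $\mathbb{V}_\lambda$. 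I would also recall the standard linear-algebra fact that the complex-bilinear extension of $g$ is again nondegenerate with $A$ still $g$-symmetric, and that for a $g$-symmetric operator one has $g(\mathbb{V}_\lambda,\mathbb{V}_{\lambda'})=0$ whenever $\lambda\neq\lambda'$; this follows from a one-line induction on $p+q$ applied to $(A-\lambda\Id)^p x=0$, $(A-\lambda'\Id)^q y=0$ using $g(Ax,y)=g(x,Ay)$.

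With these preliminaries the result follows by a short case analysis. By multilinearity it is enough to treat the case where all four arguments lie in generalized eigenspaces, say $x\in\mathbb{V}_{\lambda_1},\,y\in\mathbb{V}_{\lambda_2},\,u\in\mathbb{V}_{\lambda_3},\,v\in\mathbb{V}_{\lambda_4}$, and to show $\varphi(x,y,u,v)=0$ unless $\lambda_1=\lambda_2=\lambda_3=\lambda_4$. If $\lambda_1\neq\lambda_2$, then $\varphi(x,y,u,v)=g(x,\varphi(u,v)y)$ vanishes, since $\varphi(u,v)y\in\mathbb{V}_{\lambda_2}$ and $g(\mathbb{V}_{\lambda_1},\mathbb{V}_{\lambda_2})=0$; if $\lambda_3\neq\lambda_4$, the same argument applies after rewriting $\varphi(x,y,u,v)=\varphi(u,v,x,y)=g(u,\varphi(x,y)v)$ via $(\mathcal{K}2)$. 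The only remaining possibility is $\lambda_1=\lambda_2=:\mu$, $\lambda_3=\lambda_4=:\nu$ with $\mu\neq\nu$; here $(\mathcal{K}3)$ gives $\varphi(x,y,u,v)=-\varphi(x,v,y,u)-\varphi(x,u,v,y)$, and each term on the right has its first two arguments in the distinct eigenspaces $\mathbb{V}_\mu$ and $\mathbb{V}_\nu$, hence vanishes by the case already settled. I do not anticipate a genuine obstacle in any of this; the only points demanding a little care are keeping the complexification bookkeeping straight (so that ``$A$ symmetric'', ``$g$ nondegenerate'' and ``$\varphi(u,v)$ commutes with $A$'' all survive passage to $\mathbb{V}^\C$) and the orthogonality statement for generalized eigenspaces, while the combinatorics of the case analysis uses only $(\mathcal{K}1)$, $(\mathcal{K}2)$ and $(\mathcal{K}3)$ --- the K\"ahler symmetry $(\mathcal{K}4)$ is not needed.
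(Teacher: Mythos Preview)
Your proof is correct and follows essentially the same approach as the paper: the paper's own proof is a single sentence observing that \eqref{eq:commwithA0} is equivalent to $[\varphi(x,y),A]=0$, and then appealing to the curvature symmetries $(\mathcal{K}1)$--$(\mathcal{K}4)$ together with the orthogonality of distinct generalized eigenspaces. Your write-up simply spells out the case analysis that the paper leaves implicit, and your observation that $(\mathcal{K}4)$ is in fact not needed is a small sharpening of what the paper states.
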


\begin{proof}
The proof follows immediately from \eqref{eq:commwithA0}, which equivalently reads $[\varphi(x,y),A]=0$ for all $x,y\in \mathbb V$,
in combination with the symmetries ($\mathcal{K}1$)--($\mathcal{K}4$) of a K\"ahler curvature tensor $\varphi(x,y,u,v)=g(x,\varphi(u,v)y)$ and the fact that generalized eigenspaces 
$\mathbb{V}_\lambda$, $\mathbb{V}_{\lambda'}$, $\lambda\neq \lambda'$, are perpendicular to each other.
\end{proof}

Of particular interest for us is the case $q(A)=0$ in the commutator relation~\eqref{eq:secop}. 
Part~(b) of the next proposition relates the statement of 
Theorem~\ref{thm:Bochner} for Bochner-flat metrics to the weakly Bochner-flat case from Theorem~\ref{thm:WBF}. 
Part~(c) is Theorem~\ref{thm:symmetricspace}.

\begin{prop}\label{prop:curvminpol}
Let $(\mathbb V,g,J)$ be a (pseudo-)hermitian vector space, 
let $A:\mathbb V\rightarrow \mathbb V$ be a hermitian endomorphism with characteristic polynomial $p(t)=\det_\C(t\cdot\Id-A)$
and let $\Theta(t)$ be a polynomial such that $\Theta(A)=0$. Then, we have the following:
\begin{enumerate}
\item[(a)] ${\mathsf R}={\mathsf R}_{\Theta,A}$ satisfies \eqref{eq:commwithA0}. In particular, $R_{\Theta,A}(x,y,u,v)=0$ 
whenever two vectors  are contained in different spaces $\mathbb{V}_\lambda$, $\mathbb{V}_{\lambda'}$,
$\lambda\neq \lambda'$ (in the notation of Lemma~\ref{lem:curvcommwithA}).

Conversely, if $[{\mathsf R}_{q,A}(X),A]=0$ for all $X\in \mathfrak{u}(g,J)$ and certain polynomial $q(t)$, 
we have $q(A)=0$ if we chose the zero-order term in $q(t)$ appropriately.
\item[(b)] Consider the polynomial $H(t)=\Theta'(t)+\Theta(t)\frac{p'(t)}{p(t)}$. Then,
$$
\Ric({\mathsf R}_{\Theta,A})=-\frac{1}{2}H(A).
$$

\item[(c)] ${\mathsf R}_{\Theta,A}$ satisfies the integrability condition \eqref{eq:condsymmetric}.
\end{enumerate}
\end{prop}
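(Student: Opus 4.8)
The plan is to derive \eqref{eq:condsymmetric} as the infinitesimal version of the $\mathrm{U}(g,J)$-equivariance of the assignment $A\mapsto{\mathsf R}_{q,A}$, combined with part~(a) of the present proposition. Throughout I write $R_0=R_{\Theta,A}$ and let ${\mathsf R}_0\colon\mathfrak{u}(g,J)\to\mathfrak{u}(g,J)$ be the corresponding operator, so that ${\mathsf R}_0(u\wedge_J v)=4R_0(u,v)$.

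First I would record the equivariance. For any invertible $g$ one has $\widetilde{\mathsf R}_{q,gAg^{-1}}(X)=\tfrac{\d}{\d t}\big|_{t=0}q(gAg^{-1}+tX)=g\,\widetilde{\mathsf R}_{q,A}(g^{-1}Xg)\,g^{-1}$, i.e.\ $\widetilde{\mathsf R}_{q,gAg^{-1}}=\mathrm{Ad}_g\circ\widetilde{\mathsf R}_{q,A}\circ\mathrm{Ad}_{g^{-1}}$ with $\mathrm{Ad}_g(X)=gXg^{-1}$. Since the projection ${\mathsf{pr}}$ of Lemma~\ref{lem:projection} is built only out of $J$ and permutations of tensor slots, it commutes with the natural $\mathrm{U}(g,J)$-action on $\mathcal{R}(\mathbb V)$; as that action corresponds on the operator side to conjugation by $\mathrm{Ad}_g$ and ${\mathsf R}_{q,A}=2\,{\mathsf{pr}}(\widetilde{\mathsf R}_{q,A})$, this gives
\[
{\mathsf R}_{q,gAg^{-1}}=\mathrm{Ad}_g\circ{\mathsf R}_{q,A}\circ\mathrm{Ad}_{g^{-1}}\qquad(g\in\mathrm{U}(g,J)).
\]
Differentiating at $g=\exp(sZ)$ with $Z\in\mathfrak{u}(g,J)$ yields, for every $X\in\mathfrak{u}(g,J)$,
\[
\tfrac{\d}{\d s}\Big|_{s=0}{\mathsf R}_{q,e^{sZ}Ae^{-sZ}}(X)=[Z,{\mathsf R}_{q,A}(X)]-{\mathsf R}_{q,A}([Z,X]).
\]
If $[Z,A]=0$ then $e^{sZ}Ae^{-sZ}=A$ for all $s$, so the left-hand side vanishes and $[Z,{\mathsf R}_{q,A}(X)]={\mathsf R}_{q,A}([Z,X])$ for all $X\in\mathfrak{u}(g,J)$.

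Next I would specialize to $q=\Theta$ with $\Theta(A)=0$. By part~(a) (equivalently, by Lemma~\ref{lem:curvpol}(b) with $q=\Theta$, which gives $[{\mathsf R}_0(X),A]=[X,\Theta(A)]=0$) the image of ${\mathsf R}_0$ lies in the centralizer $\{Z\in\mathfrak{u}(g,J):[Z,A]=0\}$. Hence for every $Y\in\mathfrak{u}(g,J)$ the operator $Z={\mathsf R}_0(Y)$ commutes with $A$, and the previous paragraph gives $[{\mathsf R}_0(Y),{\mathsf R}_0(X)]={\mathsf R}_0([{\mathsf R}_0(Y),X])$ for all $X,Y$. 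Putting $X=u\wedge_J v$, $Y=x\wedge_J y$, using ${\mathsf R}_0(u\wedge_J v)=4R_0(u,v)$ together with the identity $[Z',u\wedge_J v]=(Z'u)\wedge_J v+u\wedge_J(Z'v)$ valid for $Z'\in\mathfrak{u}(g,J)$ (since $Z'$ commutes with $J$), this becomes
\[
16\big([R_0(x,y),R_0(u,v)]-R_0\bigl(R_0(x,y)u,v\bigr)-R_0\bigl(u,R_0(x,y)v\bigr)\big)=0,
\]
i.e.\ \eqref{eq:condsymmetric}. Together with $[R_0(x,y),A]=0$ from part~(a) this also finishes the proof of Theorem~\ref{thm:symmetricspace}.

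The differentiations are routine; the main obstacle will be the equivariance bookkeeping — checking carefully that ${\mathsf{pr}}$ intertwines the $\mathrm{U}(g,J)$-actions on $\mathcal{R}(\mathbb V)$ and on operators, and then matching the operator identity of the second paragraph with the tensorial shape \eqref{eq:condsymmetric} through the substitution $X=u\wedge_J v$ and the normalization ${\mathsf R}_0(u\wedge_J v)=4R_0(u,v)$, so that the Lie bracket on $\mathfrak{u}(g,J)$ turns into the curvature contractions appearing on the right of \eqref{eq:condsymmetric}.
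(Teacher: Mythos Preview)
Your argument for part~(c) is correct and takes a genuinely different route from the paper's. The paper proceeds by direct computation: it rewrites \eqref{eq:condsymmetric} as the operator identity $[{\mathsf R}_0(X),{\mathsf R}_0(Y)]={\mathsf R}_0([{\mathsf R}_0(X),Y])$, then uses the explicit formula \eqref{eq:polcurvop} to break ${\mathsf R}_{\Theta,A}$ into building blocks ${\mathsf R}_1(X)=A^rXA^s$ and ${\mathsf R}_2(X)=\tr(JA^rX)JA^s$, and checks for each block that ${\mathsf R}_i([{\mathsf R}_{\Theta,A}(X),Y])=[{\mathsf R}_{\Theta,A}(X),{\mathsf R}_i(Y)]$ using only that ${\mathsf R}_{\Theta,A}(X)$ commutes with $A$ (and with $J$). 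You instead exploit the $\mathrm{U}(g,J)$-equivariance of the assignment $A\mapsto{\mathsf R}_{q,A}$ and differentiate: since any $Z$ commuting with $A$ generates a one-parameter subgroup fixing $A$, the operator ${\mathsf R}_{q,A}$ is $\mathrm{ad}_Z$-equivariant, and then part~(a) lets you take $Z={\mathsf R}_0(Y)$. Both arguments hinge on the same key input $[{\mathsf R}_{\Theta,A}(X),A]=0$ from part~(a). Your approach is more conceptual and makes the structural reason for \eqref{eq:condsymmetric} transparent (it is naturality plus the image-in-centralizer property), and it would apply verbatim to any $\mathrm{U}(g,J)$-natural construction $A\mapsto{\mathsf R}_A$ whose image lands in the centralizer of $A$; the paper's computation, on the other hand, is self-contained and avoids the bookkeeping of checking that ${\mathsf{pr}}$ and the tensor-to-operator identification intertwine the group action.
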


\begin{rem}\label{rem:BFtoWBF}
Since $\Theta(A)=0$ and therefore, the roots of $p(t)$ (i.e., the eigenvalues of $A$)
are also roots of $\Theta(t)$, we have that the function $H(t)$ from Proposition~\ref{prop:curvminpol}(b) is indeed a polynomial.
\end{rem}

\begin{proof}[Proof of Proposition \ref{prop:curvminpol}]
(a) From Lemma \ref{lem:curvpol}(b) we obtain $[{\mathsf R}_{\Theta,A}(X),A]=[X,\Theta(A)]=0$ for all $X\in \mathfrak{u}(g,J)$
as claimed. For the converse direction, we use again Lemma \ref{lem:curvpol}(b) and conclude 
$[X,q(A)]=[{\mathsf R}_{q,A}(X),A]=0$ for all $X\in \mathfrak{u}(g,J)$. Therefore $q(A)$ is proportional to $\Id$.

(b)  The formula for the Ricci tensor immediately follows from \eqref{eq:RicRTheta2} as in this case $\Theta (\lambda_i) = 0$, $p(t) =\prod (t-\lambda_i)$ and therefore 
$H(t) = \Theta'(t) + \sum_{\lambda_i\in\mathrm{Spec}(A)} \Theta_{\lambda_i}(t)$.

(c) Rewriting \eqref{eq:condsymmetric} as a condition on operators ${\mathsf R}_0:\mathfrak{u}(g,J)\rightarrow \mathfrak{u}(g,J)$,
it takes the form
$$
0=[{\mathsf R}_0(X),{\mathsf R}_0(Y)]-{\mathsf R}_0([{\mathsf R}_0(X),Y])\quad\mbox{for all }X,Y\in \mathfrak{u}(g,J).
$$
From \eqref{eq:RABopK} and \eqref{eq:polcurvop} we see that ${\mathsf R}_{\Theta,A}(X)$ is a linear combination
of terms of the form ${\mathsf R}_1(X)=A^rXA^s$ and ${\mathsf R}_2(X)=\tr(JA^rX)JA^s$. 
Thus, it suffices to verify the two equations
$$
{\mathsf R}_1\big([{\mathsf R}_{\Theta,A}(X),Y]\big)=[{\mathsf R}_{\Theta,A}(X),{\mathsf R}_1(Y)]
\quad\mbox{and}\quad
{\mathsf R}_2\big([{\mathsf R}_{\Theta,A}(X),Y]\big)=[{\mathsf R}_{\Theta,A}(X),{\mathsf R}_2(Y)].
$$
Using that by part (a) we have ${\mathsf R}_{\Theta,A}(X)A=A{\mathsf R}_{\Theta,A}(X)$, the first equation 
can be verified straight-forwardly. The second equation also follows easily from part (a): 
$$
{\mathsf R}_2
\big([{\mathsf R}_{\Theta,A}(X),Y]\big)=\tr\Big(JA^r\big({\mathsf R}_{\Theta,A}(X)Y-Y{\mathsf R}_{\Theta,A}(X)\big)\Big)JA^s
$$
$$
=\tr\big(JA^r{\mathsf R}_{\Theta,A}(X)Y\big)JA^s-\tr\big(JA^rY{\mathsf R}_{\Theta,A}(X)\big)JA^s=0
$$
which coincides with $[{\mathsf R}_{\Theta,A}(X), {\mathsf R}_2(Y)]=0$.
\end{proof}

\begin{rem}\label{rem:Bols1}
It is interesting to notice the following reformulation of the Bochner-flat condition.   
An algebraic curvature operator ${\mathsf R}: \mathfrak{u}(g,J) \to 
 \mathfrak{u}(g,J)$  is  Bochner-flat if and only if ${\mathsf R}$ can be written as ${\mathsf R}={\mathsf R}_{q,A}$ for some hermitian operator $A$ 
and  a quadratic polynomial $q$.  Indeed, according to the definition  \eqref{eq:Bochner} of the Bochner tensor,  the condition $\mathrm{Boch} = 0$ is equivalent to 
 $$
{\mathsf R} = \frac{1}{2(n+2)} {\mathsf R}_{\Id, \mathrm{Ric}} - \frac{\mathrm{Scal}}{8(n+1)(n+2)} {\mathsf R}_{\Id, \Id} = {\mathsf R}_{q, \mathrm{Ric}},
 $$
 where $q(t) = -\frac{2}{n+2} t^2  +     \frac{\mathrm{Scal}}{(n+1)(n+2)} t + \mathrm{const}$.  
Conversely, if ${\mathsf R} = {\mathsf R}_{q, A}$ with $q(t) = at^2 + bt + c$, then  $\mathsf R = -\frac{1}{8} (2a\,{\mathsf R}_{\Id, A} + b\, {\mathsf R}_{\Id,\Id})$, 
i.e., $\mathsf R$ is a linear combination of two Bochner-flat tensors.  
Note also that $\mathsf R$ is Bochner-flat if and only if $\mathsf R={\mathsf R}_{\Id, A}$ for a certain hermitian endomorphism $A$.
 \end{rem}

\subsection{The curvature in the weakly Bochner-flat case}
\label{subsec:Bochnerflatcond}

As in \S\ref{subsec:WBF} assume that a (pseudo-)K\"ahler structure $(g,J,\omega)$ is weakly Bochner-flat
with normalized Ricci tensor a linear combination of $\Id$ and a hermitian solution $A$ of \eqref{eq:main}. 
Then, $\Ric$ is given by~\eqref{eq:WBF} for certain constants $c_2,c_1$ which appear as
coefficients of highest order in the degree $\leq \ell+1$ polynomial $H(t)=\Theta_i'(t) - \Theta_i (t)\frac{p'_\c(t)}{p_\c(t)}$ from Theorem~\ref{thm:WBF}. 
Taking the trace from \eqref{eq:WBF} and replacing $c_1,c_2$ by constants $C_1,C_2$ defined by 
\begin{equation}\label{eq:constants}
c_2=(n+2)C_2\mbox{ and }c_1=\frac{C_2}{2}\tr(A_\c)+(n+1)C_1,
\end{equation}
we obtain
$$
\frac{\Scal}{n+1}=-(n+2)C_2\mu_1-C_2\tr(A_\c)-nC_1.
$$
Inserting these formulas for $\Ric$, $\Scal$, $c_1$ and $c_2$ 
into the formula for $R$ from \eqref{eq:Bochner} and using the notation from 
\S\ref{subsec:algprelim2}, we see that 
\begin{equation}\label{eq:curvWBF}
R=\mathrm{Boch}-\tfrac{1}{8}\big((C_2\mu_1+C_1)R_{\Id,\Id}+2C_2 R_{\Id,A}\big)=R_{q,A}+\mathrm{Boch},
\end{equation}
for a polynomial 
$$
q(t)=C_2t^2+(C_2\mu_1+C_1)t+f_0,
$$
where $f_0$ can be any function on $M$ (it does not contribute to $R_{q,A}$). 
If the (pseudo-)K\"ahler structure is actually Bochner-flat, we obtain
\begin{equation}\label{eq:curvBF}
R=R_{q,A}=-\tfrac{1}{8}\big((C_2\mu_1+C_1)R_{\Id,\Id}+2C_2 R_{\Id,A}\big).
\end{equation}
Notice that the condition~\eqref{eq:curvBF} is necessary and sufficient for the (pseudo-)K\"ahler structure $(g,J,\omega)$ 
to be Bochner-flat  (cf.  Remark~\ref{rem:Bols1}).   
Thus,  our goal is to show that \eqref{eq:curvBF} is equivalent to conditions (a), (b) and (c) of Theorem \ref{thm:Bochner}.

\begin{rem}
\label{rem:Bols2}
If the conditions (a) and (b) of Theorem \ref{thm:Bochner} are fulfilled, then both $\Theta(t)$ and $H(t)= \Theta'(t) + \Theta (t) \frac{p'_{\c}(t)}{p_{\c}(t)}$ 
are polynomials  of degree $\ell+2$ and $\ell+1$ respectively.  The leading coefficients of these  polynomials 
$\Theta (t) = C_2 t^{\ell +2} + C_1 t^{\ell +1} + C_0 t^{\ell} + \dots$ and $H (t) = c_2 t^{\ell +1} + c_1 t^{\ell} + c_0 t^{\ell-1} + \dots$ 
are related as follows  (this can be easily checked by a straight-forward computation):
\begin{equation}
\label{eq:relCc}
\begin{aligned}
c_2 &= (n+2) C_2 \\
c_1 &= (n+1) C_1 +  \frac{C_2}{2} \tr (A_\c) \\
c_0 &= n C_0 +  \frac{C_1}{2} \tr  (A_\c) + \frac{C_2}{2} \tr  (A^2_\c) 
\end{aligned}
\end{equation}
In particular, the notation for these coefficients agrees with  \eqref{eq:constants}.  
If the constant block $(g_\c,\omega_\c,A_\c)$ is absent, we assume that 
$\tr  (A_\c) = \tr  (A^2_\c) = 0$, i.e., the corresponding terms in \eqref{eq:relCc} simply disappear.
\end{rem}


\subsection{Conditions on the $\Theta_i$'s}
\label{subsec:BochnerflatTheta}

We start by recalling a result from the general theory of c-projectively equivalent metrics, cf.\
\cite[Proposition 4]{ApostolovI}, \cite[Lemmas 2.1 and A.1]{BMR} and \cite[Equation (1.3)]{Mikes}:

\begin{lem}\label{lem:prolongation}
Let $(M,g,J)$ be a (pseudo-)K\"ahler manifold of real dimension $2n\geq 4$
and let $A$ be a hermitian solution of \eqref{eq:main} and $\Lambda=\tfrac{1}{4}\gr\,\tr(A)$. 
\begin{enumerate}
\item[(a)] The endomorphism $\nabla\Lambda$ is hermitian and the curvature operator ${\mathsf R}:\mathfrak{u}(g,J)\rightarrow \mathfrak{u}(g,J)$ 
satisfies
\begin{equation}\label{eq:secopcpro}
[{\mathsf R}(X),A]=4[X,\nabla\Lambda]\quad\mbox{for all}\quad X\in \mathfrak{u}(g,J).
\end{equation}
\item[(b)] The endomorphism $\nabla\Lambda$ satisfies the relation
\begin{equation}\label{eq:prolong}
\nabla\Lambda=\tfrac{1}{2n}\big( \tr (\nabla \Lambda)\, \Id+\tfrac{1}{2}J \,{\mathsf R}(JA)-A\,\Ric\big),
\end{equation}       
where ${\mathsf R}(JA)$ is the action of ${\mathsf R}$ on the skew-hermitian endomorphism $JA\in\mathfrak{u}(g,J)$ (cf.\ \S\ref{subsec:algprelim2}).
\end{enumerate}
\end{lem}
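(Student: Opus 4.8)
The plan is to carry out the standard ``prolongation'' of~\eqref{eq:main}: differentiate it once more, commute the two covariant derivatives, and then feed in the symmetries of the Kähler curvature tensor. This is the route taken in the works cited just before the lemma, and I would follow it here.

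For part~(a), the $g$-symmetry of $\nabla\Lambda$ is automatic, since $\Lambda=\tfrac14\gr\,\tr(A)$ is a gradient field. For the rest, I would take the covariant derivative of~\eqref{eq:main}; using $\nabla g=\nabla J=0$ and the second-covariant-derivative convention $\nabla^2_{v,u}A=\nabla_v(\nabla A)(u)$, this yields a closed expression for $\nabla^2_{v,u}A$ built algebraically from $g$ and $\omega$ and depending linearly on the single endomorphism $\nabla\Lambda$ (appearing both directly and twisted by $J$). Antisymmetrising in $v,u$ and invoking the Ricci identity $\nabla^2_{v,u}A-\nabla^2_{u,v}A=[R(v,u),A]$ then turns this into an integrability identity whose right-hand side is $[R(v,u),A]$. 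Now I would use the Kähler symmetry~($\mathcal{K}4$), which gives $R(Jv,Ju)=R(v,u)$ as endomorphisms, so the right-hand side — and hence the whole identity — is invariant under $(v,u)\mapsto(Jv,Ju)$; comparing the matching tensorial components (for instance, the terms proportional to $g(u,\cdot)$) forces $J\,\nabla\Lambda=\nabla\Lambda\,J$, i.e.\ $\nabla\Lambda$ is hermitian. With this in hand the antisymmetrised second-derivative expression collapses precisely to $-[u\wedge_J v,\nabla\Lambda]$, and since $\mathsf R(u\wedge_J v)=4R(u,v)$ while the elements $u\wedge_J v$ span $\mathfrak u(g,J)$, the identity becomes $[\mathsf R(X),A]=4[X,\nabla\Lambda]$ for all $X\in\mathfrak u(g,J)$, which is~\eqref{eq:secopcpro}. (As a byproduct, Lemma~\ref{lem:curvpol}(a) then shows that $\nabla\Lambda$ is in fact a polynomial in $A$.)

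For part~(b), I would instead \emph{contract} the same Ricci identity rather than antisymmetrise it: trace $\nabla^2_{v,u}A-\nabla^2_{u,v}A=[R(v,u),A]$ over the direction $u$ of the inner derivative against the argument of the $(1,1)$-tensor. The left-hand side is computed from the divergence of~\eqref{eq:main} (which gives $\Div A=2n\Lambda$, hence the term $2n\,\nabla_v\Lambda$) and from the explicit formula for $\nabla^2A$ above, using that $\nabla\Lambda$ is symmetric and $J$-commuting by part~(a) (so in particular $\tr(J\,\nabla\Lambda)=0$); it evaluates to $2n\,\nabla_v\Lambda-\tr(\nabla\Lambda)\,v$. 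On the right-hand side, $\sum_i A\,R(v,e_i)e^i=(A\,\Ric)(v)$ is immediate from the definition of $\Ric$, while $\sum_i R(v,e_i)(Ae^i)$ is identified, using the Kähler symmetries~($\mathcal{K}2$)--($\mathcal{K}4$) of $R$ together with $AJ=JA$, with $\tfrac12\,J\,\mathsf R(JA)\,v$. Equating the two sides and solving for $\nabla_v\Lambda$ produces~\eqref{eq:prolong}.

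The routine differentiation and index bookkeeping aside, the two steps that require genuine care are the extraction of $J\,\nabla\Lambda=\nabla\Lambda\,J$ in part~(a) — one has to isolate the correct tensorial component of the integrability identity — and, above all, the curvature identity $\sum_i R(v,e_i)(Ae^i)=\tfrac12\,J\,\mathsf R(JA)\,v$ in part~(b). The latter is the only genuinely non-trivial curvature computation: it rests on the full set of Kähler symmetries of $R$ and on $A$ commuting with $J$, and it is the step I expect to be the main obstacle. A useful sanity check is the case $A=\Id$, where it reduces to the classical fact that the Ricci form of a Kähler metric is the image of the Kähler form under the curvature operator.
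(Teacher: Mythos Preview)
The paper does not give its own proof of this lemma: it is stated as a known result, with references to \cite[Proposition~4]{ApostolovI}, \cite[Lemmas~2.1 and~A.1]{BMR} and \cite[Equation~(1.3)]{Mikes}. Your prolongation argument --- differentiate~\eqref{eq:main}, antisymmetrise and use the Ricci identity for part~(a), then contract the same identity for part~(b), with the K\"ahler symmetries supplying both $J\nabla\Lambda=\nabla\Lambda J$ and the curvature identity $\sum_i R(v,e_i)Ae^i=\tfrac12 J\,\mathsf R(JA)\,v$ --- is exactly the standard route taken in those references, and your sketch is correct.
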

Lemma~\ref{lem:prolongation}(a) provides the link to the theory described in \S\ref{subsec:algprelim2}:
Equation~\eqref{eq:secopcpro} takes the same form as Equation~\eqref{eq:secop0} from Lemma~\ref{lem:curvpol}.
In particular, we conclude that at each point of $M$ we have 
$\nabla\Lambda=\tfrac{1}{4}q(A)$ for some polynomial $q(t)=\sum_{k=0}^N a_k t^k$ 
with coefficients $a_k$ which may depend on the points of $M$.  The  next lemma  describes those (pseudo-)K\"ahler structures  for which this polynomial is quadratic.  

\begin{lem}\label{lem:Bols1}
Let $(g,J,\omega)$ be the (pseudo-)K\"ahler structure from \eqref{eq:gjwrhocoord} and let $A$ 
be the solution of \eqref{eq:main} given by \eqref{eq:Arhocoord}.   
\begin{enumerate}
\item[(a)] Assume that the parameters $\Theta_i$ from~\eqref{eq:gjwrhocoord} satisfy 
conditions {\rm (a)} and {\rm (b)} of Theorem~\ref{thm:Bochner}. Then $\nabla\Lambda=\tfrac{1}{4}q(A)$ with $q(t)$ being a quadratic polynomial of the form
\begin{equation}
\label{eq:polynomialBols}
q(t)=C_2 t^2+(C_2\mu_1+C_1)t+C_2 (\mu_1^2-\mu_2)+C_1\mu_1+C_0,
\end{equation}
where $C_2$, $C_1$ and $C_0$ are the leading coefficients of the polynomial $\Theta (t) = \Theta_i(t)$. 

\item[(b)] Conversely,  if $\nabla\Lambda=\tfrac{1}{4}q(A)$ with $q(t)$ given by~\eqref{eq:polynomialBols} for some constants $C_0$, $C_1$ and $C_2$, 
then the $\Theta_i$  satisfy conditions {\rm (a)} and {\rm (b)} of Theorem~\ref{thm:Bochner} and $\Theta (t) = \Theta_i(t)$ has leading coefficients $C_2$, $C_1$ and $C_0$.
\end{enumerate}
\end{lem}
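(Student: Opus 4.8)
The plan is to pin down the polynomial $q$ with $\nabla\Lambda=\tfrac14 q(A)$ by computing $\nabla\Lambda$ on the eigenspaces of $A$. From the explicit shape of $A$ in~\eqref{eq:Arhocoord} one reads off $\tr(A)=2\mu_1+\tr(A_\c)$ with $\tr(A_\c)$ constant, so $\Lambda=\tfrac14\gr\,\tr(A)=\tfrac12\gr\,\mu_1$ and $\nabla\Lambda=\tfrac12\Hess\,\mu_1$. By Lemma~\ref{lem:prolongation}(a) combined with Lemma~\ref{lem:curvpol}(a), $\nabla\Lambda=\tfrac14 q(A)$ for a polynomial $q$ whose coefficients a priori depend on the point. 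Since $q(A)$ commutes with $A$ it respects the generalized eigenspace splitting of $A$: on the complex line $\mathrm{span}\{\D_{\rho_i},J\D_{\rho_i}\}$ of a nonconstant eigenvalue $\rho_i$ (cf.\ Remark~\ref{rem:naturalcoord}) it acts as $q(\rho_i)\Id$, while the generalized eigenspaces of the constant eigenvalues $\mathrm{Spec}(A_\c)$ assemble into the ``constant block'' spanned by the lifts $\widehat{\D}_{x_s}:=\D_{x_s}-\sum_i\alpha_{is}\D_{t_i}$, on which $A$ acts as $A_\c$. Thus it suffices to compute $\nabla\Lambda$ on these two pieces.

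For the nonconstant block I would use that in~\eqref{eq:gjwrhocoord} the $\rho$-directions are mutually $g$-orthogonal and orthogonal to the $t$- and $x$-directions with $g(\D_{\rho_i},\D_{\rho_i})=\Delta_i/\Theta_i(\rho_i)$; a short computation of the Christoffel symbols $\Gamma^{\rho_j}_{\rho_i\rho_i}$ together with $\D_{\rho_i}\mu_1=1$ then shows that the $\rho_i$-eigenvalue of $\nabla\Lambda=\tfrac12\Hess\,\mu_1$ equals $\tfrac14\,\D_{\rho_i}\bigl(\sum_{j=1}^\ell\Theta_j(\rho_j)/\Delta_j\bigr)$. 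Under condition (a) the Vandermonde identities of Lemma~\ref{lem:VandId}(a) evaluate $\sum_j\Theta(\rho_j)/\Delta_j$ to a symmetric polynomial of degree $\le 3$ in $\vec\rho$ whose $\D_{\rho_i}$-derivative is exactly the value $q_0(\rho_i)$ of the polynomial $q_0$ in~\eqref{eq:polynomialBols} (one may instead invoke Lemma~\ref{lem:VandId}(c) directly); hence $q(\rho_i)=q_0(\rho_i)$ for all $i$.

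For the constant block I would apply the Koszul formula, using the decoupled block structure of~\eqref{eq:gjwrhocoord} and the brackets $[\D_{\rho_i},\widehat{\D}_{x_s}]=0$: this gives that $\nabla_{\D_{\rho_i}}\widehat{\D}_{x_s}$ has no component in the $\rho$- or $t$-directions and that $g(\nabla_{\D_{\rho_i}}\widehat{\D}_{x_s},\widehat{\D}_{x_r})=\tfrac12\D_{\rho_i}[g_\c(p_\nc(A_\c)\cdot,\cdot)]_{sr}=-\tfrac12[g_\c(p_\nc(A_\c)(A_\c-\rho_i\Id)^{-1}\cdot,\cdot)]_{sr}$, so that $\nabla_{\D_{\rho_i}}\widehat{\D}_{x_s}=-\tfrac12(A_\c-\rho_i\Id)^{-1}\widehat{\D}_{x_s}$ as an operator identity on the constant block. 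With $\Lambda=\tfrac12\sum_i(\Theta_i(\rho_i)/\Delta_i)\D_{\rho_i}$ this yields $(\nabla\Lambda)(\widehat{\D}_{x_s})=-\tfrac14\bigl(\sum_i(\Theta_i(\rho_i)/\Delta_i)(A_\c-\rho_i\Id)^{-1}\bigr)\widehat{\D}_{x_s}$. Under (a) and (b) I would clear denominators in Lemma~\ref{lem:VandId}(c) to the polynomial identity $\Theta(t)-q_0(t)p_\nc(t)=\sum_i(\Theta(\rho_i)/\Delta_i)\prod_{k\neq i}(t-\rho_k)$, substitute $t=A_\c$ (legitimate since no $\rho_i$ lies in $\mathrm{Spec}(A_\c)$, so $p_\nc(A_\c)$ is invertible and $\prod_{k\neq i}(A_\c-\rho_k\Id)=p_\nc(A_\c)(A_\c-\rho_i\Id)^{-1}$), and use $\Theta(A_\c)=0$ to get $\sum_i(\Theta(\rho_i)/\Delta_i)(A_\c-\rho_i\Id)^{-1}=-q_0(A_\c)$. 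Hence $\nabla\Lambda$ acts on the constant block as $\tfrac14 q_0(A_\c)$, matching $\tfrac14 q_0(A)$ in all values and derivatives along $\mathrm{Spec}(A_\c)$. Combining the two pieces gives $\nabla\Lambda=\tfrac14 q_0(A)$ with $q_0$ the quadratic polynomial~\eqref{eq:polynomialBols}, which is part (a).

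For part (b) the same two computations are read backwards. If $\nabla\Lambda=\tfrac14 q(A)$ with $q$ of the form~\eqref{eq:polynomialBols}, the nonconstant-block identity becomes $\D_{\rho_i}\bigl(\sum_j\Theta_j(\rho_j)/\Delta_j\bigr)=q(\rho_i)$ for each $i$; as the right-hand side is the $\D_{\rho_i}$-derivative of a symmetric polynomial of degree $\le 3$ in $\vec\rho$, integration and Lemma~\ref{lem:VandId}(b) force $\Theta_i(t)=\Theta(t)$ to be one polynomial of degree $\le\ell+2$ with leading coefficients $C_2,C_1,C_0$, i.e.\ condition (a). Then the constant-block identity reads $\sum_i(\Theta(\rho_i)/\Delta_i)(A_\c-\rho_i\Id)^{-1}=-q(A_\c)$, and comparison with the cleared form of Lemma~\ref{lem:VandId}(c) (now $q=q_0$) gives $p_\nc(A_\c)^{-1}\Theta(A_\c)=q_0(A_\c)-q(A_\c)=0$, hence $\Theta(A_\c)=0$, i.e.\ condition (b). I expect the main obstacle to be the constant-block step: obtaining $\nabla_{\D_{\rho_i}}\widehat{\D}_{x_s}$ as the clean operator identity $-\tfrac12(A_\c-\rho_i\Id)^{-1}\widehat{\D}_{x_s}$ is what lets the functional calculus in $A_\c$, and with it the argument using $\Theta(A_\c)=0$, go through uniformly in the Jordan structure of $A_\c$; in the converse direction the delicate point is the upgrade, via Lemma~\ref{lem:VandId}(b), from the a priori unrelated functions $\Theta_i$ to a single polynomial of the sharp degree $\ell+2$.
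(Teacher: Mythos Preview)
Your proposal is correct and follows the same overall strategy as the paper: split $\nabla\Lambda$ into its action on the nonconstant eigenspaces and on the constant block, compute each piece explicitly, and then match against $\tfrac14 q(A)$ using the Vandermonde identities of Lemma~\ref{lem:VandId}.

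The difference lies only in how the two pieces of $\nabla\Lambda$ are obtained. For the nonconstant block, the paper avoids Christoffel symbols: writing $\nabla\Lambda(\gr\rho_i)=m_i\gr\rho_i$ and observing $\d\,g(\Lambda,\Lambda)=\sum_i m_i\,\d\rho_i$, it reads off $m_i=\partial_{\rho_i}g(\Lambda,\Lambda)=\tfrac14\partial_{\rho_i}\sum_j\Theta_j(\rho_j)/\Delta_j$ in one line. For the constant block, the paper does not compute $\nabla_{\partial_{\rho_i}}\widehat\partial_{x_s}$ via Koszul but instead differentiates the eigenvector relation $(A-\rho_i\Id)\gr\rho_i=0$ using the c-projective equation~\eqref{eq:main} directly: for $u\in\mathcal F^\perp$ one gets $0=\nabla_u\bigl((A-\rho_i\Id)\gr\rho_i\bigr)=\tfrac12\tfrac{\Theta_i(\rho_i)}{\Delta_i}u+(A-\rho_i\Id)\nabla_u\gr\rho_i$, hence $\nabla\Lambda|_{\mathcal F^\perp}=\tfrac14\sum_i\tfrac{\Theta_i(\rho_i)}{\Delta_i}(\rho_i\Id-A_\c)^{-1}$ without any coordinate computation. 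Your Koszul argument reaches the same formula and is perfectly valid; the paper's route is shorter and has the conceptual advantage of using~\eqref{eq:main} rather than the explicit metric coefficients, so it would work in any frame adapted to the eigenspace splitting. The rest of your argument (the use of Lemma~\ref{lem:VandId}(b) and (c), and the equivalence $\Theta(A_\c)=0\Leftrightarrow\sum_i\Theta(\rho_i)/\Delta_i\,(A_\c-\rho_i\Id)^{-1}=-q(A_\c)$) matches the paper exactly.
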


\begin{proof}
By using the formulas~\eqref{eq:gjwrhocoord} for $(g,J,\omega)$, we can compute $\nabla\Lambda$ explicitly:

\begin{lem}\label{lem:nablalambda}
Let $(g,J,\omega)$ be the (pseudo-)K\"ahler structure from \eqref{eq:gjwrhocoord}
and let $A$ be the solution of \eqref{eq:main} given by \eqref{eq:Arhocoord}.
\begin{enumerate}
\item[(a)] We have 
$$
\nabla\Lambda(\gr\,\rho_i)=\frac{1}{4}\left(\frac{\D}{\D \rho_i} \sum_{j=1}^\ell \frac{\Theta_j(\rho_j)}{\Delta_j}\right)\gr\,\rho_i
$$
for $i=1,\dots,\ell$.
\item[(b)] We have 
$$
\nabla\Lambda|_{\mathcal{F}^\perp}=\frac{1}{4}\sum_{i=1}^\ell\frac{\Theta_i(\rho_i)}{\Delta_i}(\rho_i\Id-A_\c)^{-1},
$$
where $\mathcal F=\mathrm{span}\{\gr\,\rho_i,J\gr\,\rho_i:i=1,\dots,\ell\}$ (such that the orthogonal complement $\mathcal{F}^\perp$
is the direct sum of the generalized eigenspaces of $A$ corresponding to the constant eigenvalues).
\item[(c)] We have
$$
\tr  ( \nabla \Lambda )= \frac{1}{2} \sum_{i=1}^\ell\frac{H_i(\rho_i)}{\Delta_i}, 
$$
where $H_i(t)=\Theta_i'(t)+\Theta_i(t)\tfrac{p_\c'(t)}{p_\c(t)}$ for all $i=1,\dots,\ell$.
In particular,  if $H_i(t)= H(t) = c_2 t^{\ell + 1} + c_1 t^{\ell} + c_0 t^{\ell - 1} +\dots$  is a polynomial of degree $\leq\ell+1$, then
\begin{equation}
\label{eq:trDL}
\tr  ( \nabla \Lambda )  = \frac{1}{2} \left(c_2 \left(\sum_{i=1}^\ell \rho_i^2 + \mu_2\right) + c_1\mu_1 + c_0\right).
\end{equation}
\end{enumerate}
\end{lem}

\begin{proof}
Define $m_i$ via $\nabla\Lambda(\gr\,\rho_i)=m_i\gr\,\rho_i$.
Using $\Lambda=\tfrac{1}{2}\sum_{i=1}^\ell\gr\,\rho_i$ we calculate for an arbitrary tangent vector $u$ that
$$
u \big(g(\Lambda,\Lambda)\big)=2g(\nabla_u \Lambda,\Lambda)=2g(\nabla_\Lambda \Lambda,u)=\sum_{i=1}^\ell g\big(\nabla\Lambda(\gr\,\rho_i),u\big)
= \sum_{i=1}^\ell m_i g(\gr\,\rho_i,u).
$$
Hence, $\d g(\Lambda,\Lambda)=\sum_{i=1}^\ell m_i \d\rho_i$ or $m_i=\tfrac{\D}{\D \rho_i} g(\Lambda,\Lambda)$. 
By \eqref{eq:gjwrhocoord} we have 
$$
g(\Lambda,\Lambda)=\frac{1}{4}\sum_{i=1}^\ell g(\gr\,\rho_i,\gr\,\rho_i)=\frac{1}{4}\sum_{i=1}^\ell \frac{\Theta_i(\rho_i)}{\Delta_i}
$$
from which (a) follows. To prove (b), let $u\in \mathcal{F}^\perp$. Using \eqref{eq:main}, we compute for each $i\in \{1,\dots,\ell\}$
$$
0=\nabla_u\big((A-\rho_i\Id)\gr\,\rho_i\big)=\frac{1}{2}\frac{\Theta_i(\rho_i)}{\Delta_i} u+(A-\rho_i\Id)\nabla_u\gr\,\rho_i.
$$
Consequently, $\nabla_u\Lambda=-\frac{1}{4}\sum_{i=1}^\ell\frac{\Theta_i(\rho_i)}{\Delta_i}(A_\c-\rho_i\Id)^{-1} u$
from which (b) follows.

To prove (c),   we notice that (a) gives us all the eigenvalues of $\nabla \Lambda$ related to the nonconstant block. 
Taking into account both constant and nonconstant blocks,  formula \eqref{eq:frombols1} and the fact that each eigenvalue comes with multiplicity two, we get 
$$
\tr (\nabla\Lambda) = \frac{1}{2} \sum_{i=1}^\ell\left(\frac{\D}{\D \rho_i} \sum_{j=1}^\ell \frac{\Theta_j(\rho_j)}{\Delta_j}\right) 
+ \frac{1}{4}\tr \left(\sum_{i=1}^\ell\frac{\Theta_i(\rho_i)}{\Delta_i}(\rho_i\Id-A_\c)^{-1}\right) =
$$
$$
 =\frac{1}{2}  \sum_{i=1}^\ell \frac{\Theta'_i(\rho_i)}{\Delta_i} + \frac{1}{4} \sum_{i=1}^\ell \left(\frac{\Theta_i(\rho_i)}{\Delta_i}\tr (\rho_i\Id-A_\c)^{-1}\right) = 
$$
$$
=\frac{1}{2} \sum_{i=1}^\ell \frac{1}{\Delta_i }\left(\Theta'_i(\rho_i) 
+ \sum_{\lambda_\alpha \in \mathrm{Spec}{A_\c}} \frac{\Theta_i(\rho_i)}{\rho_i - \lambda_\alpha} \right) =\frac{1}{2} \sum_{i=1}^\ell \frac{H_i(\rho_i)}{\Delta_i }.
$$
The final formula for $H_j(t)=H(t)= c_2 t^{\ell + 1} + c_1 t^{\ell} + c_0 t^{\ell - 1} +\dots$ follows from the Vandermonde identities~\eqref{eq:VandId}.
\end{proof}
Let us continue with the proof of Lemma~\ref{lem:Bols1}: let $q(t)$ be the polynomial given by~\eqref{eq:polynomialBols} 
for certain constants $C_0$, $C_1$ and $C_2$. Then, $\nabla\Lambda=\tfrac{1}{4}q(A)$ holds if and only if
\begin{equation}\label{eq:system}
\nabla\Lambda(\gr\,\rho_i)=\tfrac{1}{4}q(\rho_i)\gr\,\rho_i\quad\forall i=1,\dots,\ell
\quad\mbox{and}\quad
\nabla\Lambda|_{\mathcal{F}^\perp}=\tfrac{1}{4}q(A_\c).
\end{equation}
By Lemma~\ref{lem:nablalambda}(a), the first set of equations in~\eqref{eq:system} is satisfied if and only if
\begin{equation}\label{eq:determineTheta}
\frac{\D}{\D \rho_i} \sum_{j=1}^\ell \frac{\Theta_j(\rho_j)}{\Delta_j}
=C_2 \rho_i^2+(C_2\mu_1+C_1)\rho_i+C_2 (\mu_1^2-\mu_2)+C_1\mu_1+C_0
\end{equation}
for each $i=1,\dots,\ell$. Lemma~\ref{lem:VandId}(b) implies that functions $\Theta_i(t)$ solving \eqref{eq:determineTheta}
must be equal to a polynomial $\Theta_i(t)=\Theta(t)$ of degree $\leq \ell+2$ (independent of $i$). 
Moreover, summation over all $i=1,\dots,\ell$ in~\eqref{eq:determineTheta} together with the identity~\eqref{eq:frombols1} yields
$$
\sum_{i=1}^\ell \frac{\Theta'(\rho_i)}{\Delta_i}
=(\ell+2)C_2(\mu_1^2-\mu_2)+(\ell+1)C_1\mu_1+\ell C_0.
$$
Comparing with \eqref{eq:VandId}, we see that $C_2,C_1,C_0$ are the leading coefficients of the polynomial $\Theta(t)$.
Conversely, the choice $\Theta_i(t)=\Theta(t)=C_2t^{\ell+2}+C_1t^{\ell+1}+C_0t^{\ell}+\dots$ for polynomials $\Theta_i(t)$ solves~\eqref{eq:determineTheta}
as can be verified straight-forwardly by using the Vandermonde identities~\eqref{eq:VandId}.

\medskip
For proving Lemma~\ref{lem:Bols1}, it remains to show that under the assumption that $\Theta_i(t)=\Theta(t)=C_2t^{\ell+2}+C_1t^{\ell+1}+C_0t^\ell+\dots$,
we have $\nabla\Lambda|_{\mathcal{F}^\perp}=\tfrac{1}{4}q(A_\c)$ satisfied if and only if $\Theta(A_\c)=0$. 
By Lemma~\ref{lem:nablalambda}(b), $\nabla\Lambda|_{\mathcal{F}^\perp}=\tfrac{1}{4}q(A_\c)$ is satisfied if and only if
\begin{equation}\label{eq:system2}
\sum_{i=1}^\ell\frac{\Theta_i(\rho_i)}{\Delta_i}(\rho_i\Id-A_\c)^{-1}=q(A_\c).
\end{equation}
By Lemma~\ref{lem:VandId}(c), we have 
\begin{equation}\label{eq:deriv}
\sum_{i=1}^\ell\frac{\Theta(\rho_i)}{\Delta_i(\rho_i-t)}=-\frac{\Theta(t)}{p_\nc(t)}+q(t)
\end{equation}
such that the left-hand side of \eqref{eq:system2} is the same as $-\Theta(A_\c)p_\nc(A_\c)^{-1}+q(A_\c)$. Thus, \eqref{eq:system2} is equivalent 
to $\Theta(A_\c)=0$ as we claimed.   
\end{proof}
Now, if we assume that $(g,J,\omega)$ is weakly Bochner-flat, then we can obtain the formula for $q(t)$
by inserting the formulas \eqref{eq:WBF} and \eqref{eq:curvWBF} for $\Ric$ and $R$ 
into the right-hand side of the formula \eqref{eq:prolong} for $\nabla\Lambda$, cf.\
\cite[\S 2.3]{ApostolovI} and \cite[Equation (2.14)]{Bryant}:

\begin{lem}
\label{lem:polynomial}
Let $(g,J,\omega)$ be the (pseudo-)K\"ahler structure from \eqref{eq:gjwrhocoord}
and let $A$ be the solution of \eqref{eq:main} given by \eqref{eq:Arhocoord}. Suppose $(g,J,\omega)$ is weakly-Bochner flat
with normalized Ricci tensor \eqref{eq:normRic} a linear combination of $A$ and $\Id$.
Let $c_2,c_1,c_0$ be the leading coefficients of the degree $\leq \ell+1$ polynomial $H(t)=\Theta_i'(t) + \Theta_i(t) \frac{p'_\c(t)}{p_\c(t)}$
such that $\Ric$ is given by \eqref{eq:WBF}. Then we have 
\begin{equation}\label{eq:Pol+Boch}
\nabla\Lambda=\tfrac{1}{4n}J\mathsf{Boch}(JA)+\tfrac{1}{4} q(A),
\end{equation}
where $q(t)$ is a quadratic polynomial given by \eqref{eq:polynomialBols}  with constants $C_2,C_1, C_0$ related to $c_2, c_1, c_0$ by \eqref{eq:relCc}.   

If moreover, $\nabla\Lambda=\tfrac{1}{4}q(A)$ for $q(t)$ given by \eqref{eq:polynomialBols} with some constants $C_2, C_1, C_0$, then 
these constants are related to $c_2, c_1, c_0$ as in \eqref{eq:relCc} and, therefore, $\mathsf{Boch}(JA)=0$.
\end{lem}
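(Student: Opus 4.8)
The plan is to feed the weakly-Bochner-flat splitting of the curvature into the prolongation formula for $\nabla\Lambda$. I would start from Lemma~\ref{lem:prolongation}(b),
\[
\nabla\Lambda=\tfrac{1}{2n}\bigl(\tr(\nabla\Lambda)\,\Id+\tfrac12 J\,{\mathsf R}(JA)-A\,\Ric\bigr),
\]
and insert the decomposition ${\mathsf R}=\mathsf{Boch}+{\mathsf R}_{q_0,A}$ of \eqref{eq:curvWBF}, where $q_0(t)=C_2t^2+(C_2\mu_1+C_1)t$ (its constant term being irrelevant for ${\mathsf R}_{q_0,A}$) and $C_2,C_1$ are determined from the leading coefficients $c_2,c_1$ of $H$ by \eqref{eq:constants}, i.e.\ by the first two relations of \eqref{eq:relCc}. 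Since ${\mathsf R}(JA)=\mathsf{Boch}(JA)+{\mathsf R}_{q_0,A}(JA)$, the term $\tfrac{1}{4n}J\,\mathsf{Boch}(JA)$ appears immediately, so the first assertion reduces to the purely algebraic identity
\[
\tr(\nabla\Lambda)\,\Id+\tfrac12 J\,{\mathsf R}_{q_0,A}(JA)-A\,\Ric=\tfrac n2\,q(A),
\]
with $q(t)$ the quadratic of \eqref{eq:polynomialBols}, whose constant coefficient $C_0$ we are free to \emph{define} by the third relation of \eqref{eq:relCc}.

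To verify this identity I would evaluate the three summands. From \eqref{eq:polcurvop} one has ${\mathsf R}_{q_0,A}=-\tfrac18\bigl(2C_2{\mathsf R}_{\Id,A}+(C_2\mu_1+C_1){\mathsf R}_{\Id,\Id}\bigr)$, and applying \eqref{eq:RABopK} to $X=JA\in\mathfrak{u}(g,J)$, together with $JA=AJ$ and $J^2=-\Id$, gives ${\mathsf R}_{\Id,\Id}(JA)=-8JA-4\tr(A)J$ and ${\mathsf R}_{\Id,A}(JA)=-8JA^2-2\tr(A^2)J-2\tr(A)JA$; multiplying back by $J$ turns $\tfrac12 J\,{\mathsf R}_{q_0,A}(JA)$ into an explicit combination of $\Id,A,A^2$ with coefficients linear in $C_2,C_1,\tr(A),\tr(A^2)$. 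For $\tr(\nabla\Lambda)$ I would invoke Lemma~\ref{lem:nablalambda}(c) and formula \eqref{eq:trDL} — legitimate because weak Bochner-flatness with normalized Ricci a combination of $A$ and $\Id$ forces $H_i=H$ to be a single polynomial of degree $\le\ell+1$ (Theorem~\ref{thm:WBF}) — using also $\sum_i\rho_i^2+\mu_2=\mu_1^2-\mu_2$; and for $A\,\Ric$ I would use \eqref{eq:WBF}. Finally I would substitute $\tr(A)=2\mu_1+\tr(A_\c)$, $\tr(A^2)=2(\mu_1^2-2\mu_2)+\tr(A_\c^2)$ and \eqref{eq:constants}, and collect the coefficients of $A^2$, $A$, $\Id$: the first two collapse to $\tfrac n2 C_2$ and $\tfrac n2(C_2\mu_1+C_1)$, and matching the $\Id$-coefficient is exactly what yields the remaining relation $c_0=nC_0+\tfrac{C_1}{2}\tr(A_\c)+\tfrac{C_2}{2}\tr(A_\c^2)$ of \eqref{eq:relCc}. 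This is the pseudo-K\"ahler analogue of the computation in \cite[\S2.3]{ApostolovI} and \cite[Eq.~(2.14)]{Bryant}; I expect this bookkeeping of traces and $J^2=-\Id$ factors to be the main — though entirely routine — obstacle, there being no conceptual difficulty, only the risk of arithmetic slips.

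For the second statement, assume in addition $\nabla\Lambda=\tfrac14\tilde q(A)$ with $\tilde q$ of the shape \eqref{eq:polynomialBols} for some $\tilde C_2,\tilde C_1,\tilde C_0$. By Lemma~\ref{lem:Bols1}(b) this already forces all $\Theta_i$ to coincide with a polynomial $\Theta$ of degree $\le\ell+2$ with leading coefficients $\tilde C_2,\tilde C_1,\tilde C_0$ and $\Theta(A_\c)=0$, i.e.\ conditions (a),(b) of Theorem~\ref{thm:Bochner} hold. Remark~\ref{rem:Bols2} then identifies the leading coefficients $c_2,c_1,c_0$ of $H(t)=\Theta'(t)+\Theta(t)p_\c'(t)/p_\c(t)$ — which are precisely the $c_i$ of the hypothesis, since $H$ is determined by $\Theta$ and $A_\c$ — with $\tilde C_2,\tilde C_1,\tilde C_0$ via \eqref{eq:relCc}. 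As \eqref{eq:relCc} is triangular and recovers the $C_i$ uniquely from the $c_i$, we get $\tilde C_i=C_i$, hence $\tilde q=q$; subtracting this from the formula of the first part gives $\tfrac{1}{4n}J\,\mathsf{Boch}(JA)=0$, and therefore $\mathsf{Boch}(JA)=0$ because $J$ is invertible.
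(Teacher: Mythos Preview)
Your proposal is correct and follows essentially the same route as the paper: substitute the weakly-Bochner-flat splitting \eqref{eq:curvWBF} into the prolongation formula \eqref{eq:prolong}, evaluate ${\mathsf R}_{q_0,A}(JA)$ via \eqref{eq:RABopK}, use \eqref{eq:WBF} for $A\,\Ric$ and \eqref{eq:trDL} for $\tr(\nabla\Lambda)$, and collect coefficients. The second part is likewise handled in the paper by the same combination of Lemma~\ref{lem:Bols1}(b) and Remark~\ref{rem:Bols2} that you spell out.
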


\begin{proof}
The curvature $R$ is given by the formula \eqref{eq:curvWBF}. Using \eqref{eq:RABopK}
in
${\mathsf R}(JA)=\mathsf{Boch}(JA)-\tfrac{1}{8}\big((C_2\mu_1+C_1){\mathsf R}_{\Id,\Id}(JA)+2C_2 {\mathsf R}_{\Id,A}(JA)\big)$,
we compute
$$
\tfrac{1}{2}J{\mathsf R}(JA)=\tfrac{1}{2}J\mathsf{Boch}(JA)-C_2 A^2-\tfrac{1}{2}\big(2C_2\mu_1+C_1+\tfrac{C_2}{2}\tr(A_\c)\big)A
-\tfrac{1}{4}\big((C_2\mu_1+C_1)\tr(A)+C_2\tr(A^2)\big)\Id.
$$
On the other hand, by \eqref{eq:WBF} and \eqref{eq:relCc},
$$
A\,\Ric =-\tfrac{1}{2}(n+2)C_2 A^2-\tfrac{1}{2}\big((n+2)C_2\mu_1+\tfrac{C_2}{2}\tr(A_\c)+(n+1)C_1\big)A
$$
so that
$$
\tfrac{1}{2}J{\mathsf R}(JA)-A\,\Ric 
=\tfrac{1}{2}J\mathsf{Boch}(JA)+\tfrac{n}{2}C_2 A^2+\tfrac{n}{2}(C_2\mu_1+C_1)A
-\tfrac{1}{4}\big((C_2\mu_1+C_1)\tr(A)+C_2\tr(A^2)\big)\Id.
$$
It remains to substitute this expression in \eqref{eq:prolong} and use formula \eqref{eq:trDL} for $\tr(\nabla\Lambda)$ to get:
$$
\nabla\Lambda=\tfrac{1}{2n}\big(\tfrac{1}{2}J{\mathsf R}(JA)-A\,\Ric\big)+\tfrac{1}{2n}\tr(\nabla\Lambda) \Id
$$
$$
=\tfrac{1}{4n}J\mathsf{Boch}(JA)+\tfrac{1}{4}\bigg( C_2 A^2+(C_2\mu_1+C_1)A-\tfrac{1}{2n}\big((C_2\mu_1+C_1)\tr(A)+C_2\tr(A^2)\big)   \Id +
$$
$$
+ \frac{1}{n} \big(  c_2 (\sum\rho_i^2 + \mu_2) + c_1\mu_1 + c_0\big) \Id
\bigg) =\tfrac{1}{4n}J\mathsf{Boch}(JA)+\tfrac{1}{4} q(A)
$$
as claimed.  The final part follows from Lemma~\ref{lem:Bols1}(b)  and Remark~\ref{rem:Bols2}.
\end{proof}

We now use Lemmas~\ref{lem:Bols1} and~\ref{lem:polynomial} to clarify the geometric meaning of conditions (a) and (b) of Theorem~\ref{thm:Bochner}. 
For the (pseudo-)K\"ahler structure $(g,J,\omega)$ from \eqref{eq:gjwrhocoord} we say that
a tangent vector is \emph{vertical} if it is contained in the vertical distribution
$\mathcal F=\mathrm{span}\{\gr\,\rho_i,J\gr\,\rho_i:i=1,\dots,\ell\}$. It is called \emph{horizontal} if it is contained
in the orthogonal complement $\mathcal{F}^\perp$. Note that $\mathcal F$ is the direct sum of all eigenspaces of $A$ from  \eqref{eq:Arhocoord}
corresponding to the nonconstant eigenvalues $\rho_1,\dots,\rho_\ell$. Thus, $\mathcal{F}^\perp$ 
is the direct sum of the generalized eigenspaces of $A$ corresponding to the constant eigenvalues.
We will call a tensor \emph{horizontal} if it vanishes when a vertical tangent vector is inserted. 

\begin{prop}\label{prop:Bochner1}
Consider the (pseudo-)K\"ahler structure $(g,J,\omega)$ from \eqref{eq:gjwrhocoord} and let
$A$ be given by \eqref{eq:Arhocoord}.
If $(g,J,\omega)$ is Bochner-flat with normalized Ricci tensor \eqref{eq:normRic}
a constant linear combination of $g(A\cdot,\cdot)$ and $g$, then conditions (a) and (b) of Theorem \ref{thm:Bochner} hold, i.e.,
\begin{enumerate}
\item[(a)] $\Theta_i(t)=\Theta(t)$ is a polynomial of degree $\leq \ell+2$ independent of $i$ and
\item[(b)] $\Theta(A_\c)=0$.
\end{enumerate} 
Conversely, suppose $(g,J,\omega)$ is weakly Bochner-flat with normalized Ricci tensor \eqref{eq:normRic}
a linear combination of $g(A\cdot,\cdot)$ and $g$ and it satisfies conditions {\rm (a)} and {\rm (b)}
from above. Then, the Bochner tensor is horizontal.
\end{prop}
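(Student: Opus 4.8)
\medskip

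The plan is to treat the two implications in turn, each time passing to the algebraic reformulation of \S\ref{subsec:algprelim2} and invoking Lemmas~\ref{lem:Bols1} and~\ref{lem:polynomial}. For the forward implication, assume $(g,J,\omega)$ from~\eqref{eq:gjwrhocoord} is Bochner-flat with normalized Ricci tensor a constant linear combination of $g(A\cdot,\cdot)$ and $g$. Then $\mathrm{Boch}=0$ forces $\Div(\mathrm{Boch})=0$, so the structure is in particular weakly Bochner-flat with normalized Ricci of the form required in Lemma~\ref{lem:polynomial}; applying that lemma and using $\mathrm{Boch}=0$ gives $\nabla\Lambda=\tfrac{1}{4n}J\mathsf{Boch}(JA)+\tfrac14 q(A)=\tfrac14 q(A)$ with $q(t)$ the quadratic polynomial of the shape~\eqref{eq:polynomialBols}. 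Feeding this into Lemma~\ref{lem:Bols1}(b) returns precisely conditions (a) and (b) of Theorem~\ref{thm:Bochner} (and identifies the leading coefficients of $\Theta$ with $C_2,C_1,C_0$), which is the first assertion.

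For the converse, assume $(g,J,\omega)$ from~\eqref{eq:gjwrhocoord} is weakly Bochner-flat with normalized Ricci a linear combination of $g(A\cdot,\cdot)$ and $g$, and that (a) and (b) hold. The first step is to show $[\mathsf{Boch}(X),A]=0$ for all $X\in\mathfrak{u}(g,J)$. Indeed, Lemma~\ref{lem:Bols1}(a) turns (a)+(b) into $\nabla\Lambda=\tfrac14 q(A)$ with $q$ the quadratic polynomial~\eqref{eq:polynomialBols}, so the prolongation identity Lemma~\ref{lem:prolongation}(a) gives $[{\mathsf R}(X),A]=4[X,\nabla\Lambda]=[X,q(A)]$. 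On the other hand, weak Bochner-flatness supplies the curvature decomposition~\eqref{eq:curvWBF}, $R={\mathsf R}_{q,A}+\mathrm{Boch}$, with the same $q$ up to an (irrelevant) constant term — the coefficients $C_2,C_1$ agree because in both places they are expressed through~\eqref{eq:relCc}/\eqref{eq:constants} in terms of the leading coefficients $c_2,c_1$ of $H$ (cf.\ Remark~\ref{rem:Bols2}). Lemma~\ref{lem:curvpol}(b) then gives $[{\mathsf R}_{q,A}(X),A]=[X,q(A)]$, and subtracting the two identities for $[{\mathsf R}(X),A]$ yields $[\mathsf{Boch}(X),A]=0$. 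Verifying this coincidence of polynomials is the one slightly fiddly point; everything else here is bookkeeping.

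Granting $[\mathsf{Boch}(X),A]=0$, Lemma~\ref{lem:curvcommwithA} shows that $\mathrm{Boch}(x,y,u,v)=0$ whenever two of the arguments lie in different (generalized) eigenspaces of $A$; thus $\mathrm{Boch}$ is block-diagonal with respect to the eigenspace decomposition of $A$, and in particular the Ricci contraction of $\mathrm{Boch}$ restricted to one eigenspace coincides with the full Ricci contraction restricted there. By~\eqref{eq:Arhocoord} and Remark~\ref{rem:naturalcoord}, for a nonconstant eigenvalue $\rho_i$ the eigenspace $V_i=\mathrm{span}\{\D/\D\rho_i,J(\D/\D\rho_i)\}$ (passing to the complexification of the tangent space if $\rho_i$ is complex) is an honest eigenspace — no Jordan block — which is $J$-invariant of complex dimension one, and the vertical distribution equals $\mathcal F=\bigoplus_i V_i$. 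Since $\mathrm{Boch}$ is totally trace-free, $\Ric(\mathrm{Boch})=0$, so by the previous observation the restriction of $\mathrm{Boch}$ to any such $V_i$ is a trace-free K\"ahler algebraic curvature tensor on a (pseudo-)hermitian vector space of complex dimension one; but the space of K\"ahler algebraic curvature tensors on a complex line is one-dimensional, spanned by the tensor of constant holomorphic sectional curvature, which has nonzero Ricci trace, so $\mathrm{Boch}|_{V_i}=0$. Hence $\mathrm{Boch}(x,\cdot,\cdot,\cdot)=0$ for every $x\in\mathcal F$, which is exactly horizontality of the Bochner tensor. I expect the only step requiring a genuinely new (if short) argument to be this last one — reducing the vanishing of the vertical blocks of $\mathrm{Boch}$ to the triviality of trace-free K\"ahler curvature tensors on a complex line; the remaining difficulty is the coefficient comparison in the previous paragraph.
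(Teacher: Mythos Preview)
Your proof is correct. The forward direction and the reduction to $[\mathsf{Boch}(X),A]=0$ in the converse direction coincide with the paper's argument (combining Lemmas~\ref{lem:Bols1}, \ref{lem:polynomial}, \ref{lem:prolongation}(a), \ref{lem:curvpol}(b), and the decomposition~\eqref{eq:curvWBF}, together with the coefficient match via Remark~\ref{rem:Bols2}). The final step, however, is genuinely different: the paper invokes the second conclusion of Lemma~\ref{lem:polynomial}, namely $\mathsf{Boch}(JA)=0$, and then expands $JA$ in the eigenbasis of $A$ to extract $\mathrm{Boch}(\gr\rho_i,J\gr\rho_i)\gr\rho_i=0$ for each $i$. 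You instead use only the block-diagonality from Lemma~\ref{lem:curvcommwithA} together with the intrinsic fact that $\mathrm{Boch}$ is Ricci-flat: since the Ricci contraction respects the block decomposition, each $\mathrm{Boch}|_{V_i}$ is a trace-free K\"ahler curvature tensor on a complex line, hence zero. Your route is slightly more self-contained---it does not require the identity $\mathsf{Boch}(JA)=0$ and hence not the full strength of Lemma~\ref{lem:polynomial}---while the paper's route makes the vanishing explicit via a single scalar equation. One small remark: for a complex nonconstant eigenvalue $\rho_i$ the eigenspace in the complexified tangent space is one-dimensional over $\mathbb C$, so $\mathrm{Boch}|_{V_i}$ already vanishes by antisymmetry; your trace-free argument is only needed for the real $\rho_i$, where $V_i$ is real two-dimensional.
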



\begin{proof}
Assume that $(g,J,\omega)$ is Bochner-flat with normalized Ricci tensor a linear combination of $A$ and $\Id$. 
Then by Lemma~\ref{lem:polynomial},   $\nabla\Lambda = \frac{1}{4} q(A)$  
with $q(t)$ given by~\eqref{eq:polynomialBols} and therefore conditions (a) and (b)  follow from Lemma~\ref{lem:Bols1}(b).

Conversely, suppose that $(g,J,\omega)$ is weakly Bochner-flat with normalized Ricci tensor
a linear combination of $g(A\cdot,\cdot)$ and $g$ and that conditions {\rm (a)} and {\rm (b)} are satisfied.   
Let $\Theta(t)=C_2 t^{\ell+2}+C_1t^{\ell+1}+C_0t^\ell+\dots$. By Lemma \ref{lem:Bols1} we have $\nabla\Lambda=\tfrac{1}{4}q(A)$,  
where $q(t)$ is of the form \eqref{eq:polynomialBols}.  By Lemma \ref{lem:curvpol}(b) and Lemma \ref{lem:prolongation}(a), the curvature tensor $R$
of $g$ is given by $R=R_{q,A}+\varphi$, where the algebraic curvature tensor $\varphi$ satisfies
\begin{equation}\label{eq:commwithA1}
[\varphi(u,v),A]=0\quad\mbox{for all tangent vectors }u,v.
\end{equation}
On the other hand, the weakly Bochner-flat condition implies that the curvature tensor $R$ is given by \eqref{eq:curvWBF}, i.e., 
$$
R=R_{q,A}+\mathrm{Boch}.
$$ 
\weg{with $q(t)$ given by \eqref{eq:polynomialBols}  (notice that a priori the polynomials $q(t)$ in formulas 
\eqref{eq:polynomialBols} and \eqref{eq:curvWBF}  might be different).} Hence we conclude that $\varphi = \mathrm{Boch}$ and 
\begin{equation}\label{eq:commwithA2}
[\mathrm{Boch}(u,v),A]=0\quad\mbox{for all tangent vectors }u,v.
\end{equation} 
By Lemma \ref{lem:curvcommwithA}, in order to prove that $\mathrm{Boch}$ is horizontal, 
it suffices to show that 
\begin{equation}\label{eq:varphihorizontal}
\mathrm{Boch}\big(\gr\,\rho_i,J\gr\,\rho_i\big)\gr\,\rho_i=0\mbox{ for all }i=1,\dots,\ell.
\end{equation}
By Lemma~\ref{lem:polynomial}, $\mathrm{Boch}$ also satisfies $\mathsf{Boch}(JA)=0$.  
We see from \eqref{eq:hamrhocoord} (reformulated using the notation from \S \ref{subsec:algprelim2})
that
$$
g(JA\cdot,\cdot)=\frac{1}{2}\sum_{i=1}^\ell\rho_i\frac{\Delta_i}{\Theta_i}g(\gr\,\rho_i\wedge_J J\gr\,\rho_i\cdot,\cdot)+\omega_\c(p_\nc(A_\c)A_\c\cdot,\cdot)
$$
and therefore $JA=\tfrac{1}{2}\sum_{i=1}^\ell\rho_i\frac{\Delta_i}{\Theta_i}\gr\,\rho_i\wedge_J J\gr\,\rho_i+E$, where $E:\mathcal{F}^\perp\rightarrow \mathcal{F}^\perp$ is some endomorphism
of $\mathcal{F}^\perp$ viewed as an endomorphism of the tangent bundle $\mathcal{F}\oplus\mathcal{F}^\perp$ by setting it zero on $\mathcal{F}$.
By Lemma \ref{lem:curvcommwithA}, we have 
$$
\mathrm{Boch}(\gr\,\rho_j, J\gr\,\rho_j)\gr\,\rho_i=0\quad\forall i\neq j\quad\mbox{and}\quad\mathsf{Boch}(E)\gr\,\rho_i=0\quad\forall i
$$
such that 
$$
0=\mathsf{Boch}(JA)\gr\,\rho_i=2\rho_i\frac{\Delta_i}{\Theta_i}\mathrm{Boch}\left(\gr\,\rho_i,J\gr\,\rho_i\right)\gr\,\rho_i
$$
for each $i=1,\dots,\ell$. We have shown that~\eqref{eq:varphihorizontal} is satisfied and this completes the proof of the proposition. 
\end{proof}

\subsection{Conditions on $(g_\c,\omega_\c)$}
\label{subsec:Bochnerflatconst}

The proof of the next statement will complete the proof of Theorem \ref{thm:Bochner}:

\begin{prop}\label{prop:Bochner2}
Consider the (pseudo-)K\"ahler structure $(g,J,\omega)$ from \eqref{eq:gjwrhocoord} and let
$A$ be given by \eqref{eq:Arhocoord}. Suppose $(g,J,\omega)$ is weakly Bochner-flat with normalized Ricci tensor \eqref{eq:normRic}
a linear combination of $g(A\cdot,\cdot)$ and $g$ and that conditions (a) and (b) of Proposition \ref{prop:Bochner1}
are satisfied. Then, $(g,J,\omega)$ is Bochner-flat if and only if the curvature tensor $R_\c$ of $(g_\c,\omega_\c)$
satisfies $R_\c=R_{g_\c,\Theta,A_\c}$.
\end{prop}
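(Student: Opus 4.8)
The plan is to reduce the Bochner-flatness of $(g,J,\omega)$ to an identity on the horizontal distribution, and then to match two explicit curvature computations there.

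First I would invoke the facts already established in (the proof of) Proposition~\ref{prop:Bochner1}: under the present hypotheses $\nabla\Lambda=\tfrac14 q(A)$ with $q$ the quadratic polynomial \eqref{eq:polynomialBols} (Lemma~\ref{lem:Bols1}(a)), so by Lemma~\ref{lem:prolongation}(a), Lemma~\ref{lem:curvpol}(b) and the weakly Bochner-flat relation \eqref{eq:curvWBF} the curvature splits as $R=R_{g,q,A}+\mathrm{Boch}$ with $\mathrm{Boch}$ \emph{horizontal}, i.e.\ vanishing once a vector of $\mathcal F=\mathrm{span}\{\gr\rho_i,J\gr\rho_i\}$ is inserted. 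Hence $(g,J,\omega)$ is Bochner-flat if and only if $R$ and $R_{g,q,A}$ agree on the horizontal complement $\mathcal F^\perp$ — the sum of the generalized eigenspaces of $A$ for the constant eigenvalues, which is the horizontal distribution of the local fibration $M\to S$. If the constant block is absent ($\ell=n$) then $\mathcal F^\perp=0$ and the assertion is empty on both sides, so I may assume $k=n-\ell\geq1$.

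Next I would compute the two sides on $\mathcal F^\perp$ separately. On the one hand, $R_{g,q,A}$ is built algebraically from $g$, $\omega$ and powers of $A$ by \eqref{eq:RAB}--\eqref{eq:defRThA}; restricting, and using that on $\mathcal F^\perp$ one has (from \eqref{eq:gjwrhocoord}, \eqref{eq:Arhocoord}) $g=g_\c(p_\nc(A_\c)\cdot,\cdot)$, $\omega=\omega_\c(p_\nc(A_\c)\cdot,\cdot)$, $A=A_\c$, gives $R_{g,q,A}|_{\mathcal F^\perp}=R_{g_\c(p_\nc(A_\c)\cdot,\cdot),q,A_\c}$, which — after expanding $p_\nc(A_\c)$ and $q$, using condition (b) ($\Theta(A_\c)=0$) and the degree condition in (a) together with the Vandermonde identities of Lemma~\ref{lem:VandId} — I expect to rearrange into the transported base tensor $R_{g_\c,\Theta,A_\c}$ (twisted by $p_\nc(A_\c)$, exactly as $g$ itself is) plus a ``fibration correction'' $\Phi$ depending only on $g_\c$, $A_\c$, the $\rho_i$ and $\Theta$. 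On the other hand, $R|_{\mathcal F^\perp}$ is computed from the metric \eqref{eq:gjwrhocoord} by a Gauss-type identity: using the Koszul formula and \eqref{eq:main} to handle the covariant derivatives of horizontal fields along $\mathcal F$ (the key term being $g(\nabla_u v,\gr\rho_i)=\tfrac12\tfrac{\Theta_i(\rho_i)}{\Delta_i}g((A_\c-\rho_i\Id)^{-1}u,v)$ for $u,v\in\mathcal F^\perp$, obtained as in the proof of Lemma~\ref{lem:nablalambda}(b)), and simplifying with Lemma~\ref{lem:VandId}, the weakly Bochner-flat equation \eqref{eq:WBF} and $\mathrm{Ric}_\c=-\tfrac12 g_\c(H(A_\c)\cdot,\cdot)$ from Theorem~\ref{thm:WBF}, I expect to obtain $R|_{\mathcal F^\perp}=$ (the transported $R_\c$, twisted by $p_\nc(A_\c)$)$\ +\Phi$ with the \emph{same} correction $\Phi$.

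Subtracting the two expressions then yields $\mathrm{Boch}|_{\mathcal F^\perp}=p_\nc(A_\c)\cdot(R_\c-R_{g_\c,\Theta,A_\c})$ (in the first slot), and since $p_\nc(A_\c)$ is invertible on $\mathcal F^\perp$ (on $M^0$ the $\rho_i$ avoid $\mathrm{Spec}(A_\c)$) this vanishes precisely when $R_\c=R_{g_\c,\Theta,A_\c}$; together with the reduction in the first paragraph this is exactly the claim, and hence completes the proof of Theorem~\ref{thm:Bochner}. The main obstacle is the matching of the two computations in the previous paragraph: carrying out the horizontal curvature computation for \eqref{eq:gjwrhocoord} and verifying that the correction term $\Phi$ it produces is literally the one coming from the algebraic expansion of $R_{g,q,A}|_{\mathcal F^\perp}$ is a substantial calculation, and it is exactly here that the weakly Bochner-flat hypothesis and conditions (a), (b) get used up, so that the residual is precisely $p_\nc(A_\c)\cdot(R_\c-R_{g_\c,\Theta,A_\c})$ with an invertible coefficient.
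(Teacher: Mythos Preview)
Your strategy is essentially the paper's: reduce to the horizontal block via Proposition~\ref{prop:Bochner1}, compute $R$ on $\mathcal F^\perp$ by a Gauss/O'Neill formula, compare with $R_{g,q,A}|_{\mathcal F^\perp}=R_{g_H,q,A_\c}$ (where $g_H=g_\c(p_\nc(A_\c)\cdot,\cdot)$), and translate the residual condition from $g_H$ back to $g_\c$ using invertibility of $p_\nc(A_\c)$. The paper organises the bookkeeping a bit differently: rather than decomposing each side as ``twisted base tensor $+\ \Phi$'' and then matching the two $\Phi$'s, it computes the Gauss correction once and recognises it as $R_{g_H,f,A_\c}$ for the single rational function $f(t)=\sum_i\Theta(\rho_i)/(\Delta_i(\rho_i-t))$ (via Example~\ref{exmp:main}); Lemma~\ref{lem:VandId}(c) then gives $f=q-\Theta/p_\nc$, so the horizontal Bochner-flat condition is simply $R_{g_H}=R_{g_H,\Theta/p_\nc,A_\c}$, and the passage to $R_\c=R_{g_\c,\Theta,A_\c}$ is an algebraic identity using $\Theta(A_\c)=0$, the derivative formula~\eqref{eq:polcurvopRiem}, and the projection $\mathsf{pr}$ of Lemma~\ref{lem:projection}. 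This avoids having to guess the form of $\Phi$ in advance.

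One correction to your sketch: the weakly Bochner-flat relation~\eqref{eq:WBF} and the identity $\mathrm{Ric}_\c=-\tfrac12 g_\c(H(A_\c)\cdot,\cdot)$ are \emph{not} used in the horizontal curvature computation --- the formula relating $R_H$ to $R_\c$ (the paper's~\eqref{eq:horizcurv}, taken from~\cite{ApostolovI}) holds for any metric of the form~\eqref{eq:gjwrhocoord}. Only conditions (a) and (b) on $\Theta$ and $A_\c$ are needed to make the correction collapse; the weakly Bochner-flat hypothesis was already spent in Proposition~\ref{prop:Bochner1} to reduce to the horizontal comparison.
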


\begin{proof}
By \eqref{eq:curvBF} and Proposition~\ref{prop:Bochner1}, $(g,J,\omega)$ is Bochner-flat if and only if 
$$
R(x,y,u,v)=R_{g,q,A}(x,y,u,v)
$$
for all horizontal vectors $x,y,u,v$. We write $R_H(x,y,u,v)=R(x,y,u,v)$, that is, $R_H$ denotes the horizontal part
of the curvature tensor $R$. Moreover, we obviously have $R_{q,A}(x,y,u,v)=R_{g_H,q,A_\c}(x,y,u,v)$, where $g_H=g_\c(p_\nc(A_\c)\cdot,\cdot)$
denotes the horizontal part of the metric $g$. Thus, when written down more compactly, we have that $(g,J,\omega)$ is Bochner-flat if and only if 
\begin{equation}\label{eq:horizBFcond0}
R_H=R_{g_H,q,A_\c}.
\end{equation}
Let us relate $R_H$ to the curvature tensor $R_\c$ of the metric $g_\c$: using the formulas from \cite[Proposition 9]{ApostolovI} resp.\
the second part of the proof of \cite[Proposition 17]{ApostolovI} (the derivation remains the same in arbitrary signature), 
one calculates that 
\begin{equation}\label{eq:horizcurv}
\begin{array}{c}
\displaystyle R_H(x,y,u,v)=g_\c(p_\nc(A_\c)x,R_\c(u,v)y)
\vspace{1mm}\\
\displaystyle -g\big(C(u,x),C(v,y)\big)+g\big(C(v,x),C(u,y)\big)
+g\big(C(y,x),C(u,v)\big)-g\big(C(y,x),C(v,u)\big),
\end{array}
\end{equation}
for all horizontal vectors $x,y,u,v$, where
$$
C(x,y)=\frac{1}{2}\sum_{r=1}^\ell (-1)^r\big(\omega_\c(A_\c^{\ell-r}x,y)K_r+\omega_\c(A_\c^{\ell-r}Jx,y)JK_r\big)
$$
and $K_i=J\gr\,\mu_i$ are the coordinate vector fields $\tfrac{\D}{\D t_i}$ from~\eqref{eq:gjwrhocoord}. 
Note that \eqref{eq:horizcurv} holds for the (pseudo-)K\"ahler structure from~\eqref{eq:gjwrhocoord}
-- we do not have to assume additional conditions such as Bochner-flatness. 
A straight-forward calculation using~\eqref{eq:gjwrhocoord} yields 
\begin{equation}\label{eq:GrayONeillTensor}
\begin{array}{c}
\displaystyle g\big(C(x,y),C(u,v)\big)=
\vspace{1mm}\\
\displaystyle  =\frac{1}{4}\sum_{i=1}^\ell\frac{\Theta(\rho_i)}{\Delta_i}\Big(
g_\c\big(p_\nc(A_\c)(A_\c-\rho_i\Id)^{-1}x,y\big) \, g_\c\big(p_\nc(A_\c)(A_\c-\rho_i\Id)^{-1}u,v\big)
\vspace{1mm}\\
\displaystyle + \, \omega_\c\big(p_\nc(A_\c)(A_\c-\rho_i\Id)^{-1}x,y\big) \, \omega_\c\big(p_\nc(A_\c)(A_\c-\rho_i\Id)^{-1}u,v\big)\Big).
\end{array}
\end{equation}
Note that by assumption we have $\Theta(t)=C_2t^{\ell+2}+C_1 t^{\ell+1}+C_0t^\ell+\dots$ for certain constants $C_0$, $C_1$ and $C_2$.
For further simplification of Equation~\eqref{eq:horizcurv},
we use the notation introduced in \S\ref{subsec:algprelim2}: 
the hermitian vector space $\mathbb V$ w.r.t.\ which the K\"ahler curvature tensors from \S\ref{subsec:algprelim2} 
are considered is a tangent space $T_p S$ of $S$ with complex structure $J_\c$. 
Note that $(T_p S,J_\c)$ is canonically isomorphic to $(\mathcal{F}_q^\perp,J)$ for any point $q$ in the fiber over $p$
(cf.\ Remark~\ref{rem:naturalcoord}). For the hermitian metric on $\mathbb V$ we use $g_H=g_\c(p_\nc(A_\c)\cdot,\cdot)$. 
Combining~\eqref{eq:horizcurv} with~\eqref{eq:GrayONeillTensor}
and the definition~\eqref{eq:RAB} of curvature tensors of the form $R_{g,A,B}$, we obtain
\begin{equation}\label{eq:step0}
R_H=R_{g_H}
-\frac{1}{8}\sum_{i=1}^\ell\frac{\Theta(\rho_i)}{\Delta_i}R_{g_H,(A_\c-\rho_i\Id)^{-1},(A_\c-\rho_i\Id)^{-1}},
\end{equation}
where we defined $R_{g_H}(x,y,u,v)=g_H(x,R_\c(u,v)y)=g_\c(p_\nc(A_\c)x,R_\c(u,v)y)$ for all horizontal vectors $x,y,u,v$.
Since the metrics $g_\c$ and $g_H$ on $S$ have the same Levi-Civita connections (since $A_\c$ is $g_\c$-parallel by Theorem~\ref{thm:localclass}), 
the $(1,3)$-curvature tensors of $g_H$ and $g_\c$ coincide. Thus, $R_{g_H}$ is the $(0,4)$-curvature tensor of $g_H$.
The computations from Example~\ref{exmp:main} show that Equation~\eqref{eq:step0} can be written as
$$
R_H=R_{g_H}+R_{g_H,f,A_\c},
$$
for the function $f(t)=\sum_{i=1}^\ell\frac{\Theta(\rho_i)}{\Delta_i(\rho_i-t)}$.
Moreover, by Lemma~\ref{lem:VandId}(c), we have $f(t)=-\tfrac{\Theta(t)}{p_\nc(t)}+q(t)$
for $q(t)=C_2t^2+(C_2\mu_1+C_1)t+\dots$ such that we obtain
\begin{equation}\label{eq:step}
R_H=R_{g_H}-R_{g_H,\Theta/p_\nc,A_\c}+R_{g_H,q,A_\c}.
\end{equation}
From \eqref{eq:step} we see that the horizontal Bochner-flat condition~\eqref{eq:horizBFcond0} 
is equivalent to
\begin{equation}\label{eq:horizBFcond}
R_{g_H}=R_{g_H,\Theta/p_\nc,A_\c}.
\end{equation}
We claim that \eqref{eq:horizBFcond} is actually equivalent to
$$
R_\c=R_{g_\c,\Theta,A_\c}.
$$
For proving this, let us evaluate the right-hand side of \eqref{eq:horizBFcond} using $\Theta(A_\c)=0$:
for $X\in \mathfrak{so}(g_H)$ we compute for the Riemannian curvature operator 
$\widetilde{\mathsf{R}}_{g_H,\Theta/p_\nc,A_\c}: \mathfrak{so}(g_H)\rightarrow  \mathfrak{so}(g_H)$
\begin{equation}\label{eq:firsteq}
\begin{array}{c}
\displaystyle \widetilde{\mathsf{R}}_{g_H,\Theta/p_\nc,A_\c}(X)=\frac{\d}{\d t}\Big|_{t=0}\Theta(A_\c+tX)p_\nc(A_\c+tX)^{-1}
\vspace{1mm}\\
\displaystyle =\frac{1}{2}(\widetilde{\mathsf{R}}_{g_H,\Theta,A_\c}(X)p_\nc(A_\c)^{-1}+p_\nc(A_\c)^{-1}\widetilde{\mathsf{R}}_{g_H,\Theta,A_\c}(X))
=\frac{1}{2}\widetilde{\mathsf{R}}_{g_H,\Theta,A_\c}\big(p_\nc(A_\c)^{-1} X+X p_\nc(A_\c)^{-1}\big).
\end{array}
\end{equation}
The last equality holds because for $\Theta(t)=\sum_{k=0}^N a_k t^k$, we have 
$$
\widetilde{\mathsf{R}}_{g_H,\Theta,A_\c}(X)=\sum_{k=1}^N a_k\sum_{i+j=k-1}A_\c^i X A_\c^j
$$
and $p_\nc(A_\c)^{-1}$ commutes with $A_\c$.
Let us replace $X$ in $\widetilde{\mathsf{R}}_{g_H,\Theta/p_\nc,A_\c}(X)$ with the generating element 
$$
X=u\wedge_{g_H}v=g_H(u,\cdot)\otimes v-g_H(v,\cdot)\otimes u
$$ 
of $\mathfrak{so}(g_H)$, where $u,v \in \mathbb V$ are arbitrary. For this choice of $X$ we have
\begin{equation}\label{eq:secondeq}
p_\nc(A_\c)^{-1} X+X p_\nc(A_\c)^{-1}=u\wedge_{g_H}(p_\nc(A_\c)^{-1}v)+(p_\nc(A_\c)^{-1}u)\wedge_{g_H}v.
\end{equation}
We obtain
\begin{equation}\label{eq:Step1}
\begin{array}{c}
\displaystyle \tilde{R}_{g_H,\Theta/p_\nc,A_\c}(x,y,u,v)=\frac{1}{2}g_H(x,\widetilde{\mathsf{R}}_{g_H,\Theta/p_\nc,A_\c}(u\wedge_{g_H}v)y)
\vspace{1mm}\\
\displaystyle \overset{\eqref{eq:firsteq},\eqref{eq:secondeq}}{=}
\frac{1}{2}\tilde{R}_{g_H,\Theta,A_\c}(x,y,u,p_\nc(A_\c)^{-1}v)
+\frac{1}{2}\tilde{R}_{g_H,\Theta,A_\c}(x,y,p_\nc(A_\c)^{-1}u,v).
\end{array}
\end{equation}
Suppose $\Theta(t)=\sum_{k=0}^N a_k t^k$. We can easily verify (using \eqref{eq:RABtilde} and \eqref{eq:polcurvopRiem0})
that the $(0,4)$-curvature tensor corresponding to the operator $\widetilde{\mathsf{R}}_{g_H,\Theta,A_\c}$ takes the form
\begin{equation}\label{eq:RThetaA04}
\tilde{R}_{g_H,\Theta,A_\c}=-\frac{1}{4}\sum_{k=1}^N a_k\sum_{r+s=k-1}g_H A_\c^r\owedge g_HA_\c^s.
\end{equation}
Thus, we compute (using $g_H=g_\c(p_\nc(A_\c)\cdot,\cdot)$)
\begin{equation}\label{eq:Step2}
\begin{array}{c}
\displaystyle \frac{1}{2}\tilde{R}_{g_H,\Theta,A_\c}(x,y,u,p_\nc(A_\c)^{-1}v)=
\vspace{1mm}\\
\displaystyle =-\frac{1}{4}\sum_{k=1}^N a_k\sum_{r+s=k-1}
[
g_\c (A_\c^r p_\nc(A_\c)x,u)g_\c (A_\c^s y,v)
-g_\c (A_\c^r x,v)g_\c (A_\c^s p_\nc(A_\c)y,u)
]
\end{array}
\end{equation}
and similarly
\begin{equation}\label{eq:Step3}
\begin{array}{c}
\displaystyle \frac{1}{2}\tilde{R}_{g_H,\Theta,A_\c}(x,y,p_\nc(A_\c)^{-1}u,v)=
\vspace{1mm}\\
\displaystyle =-\frac{1}{4}\sum_{k=1}^N a_k\sum_{r+s=k-1}
[
g_\c (A_\c^r x,u)g_\c (A_\c^s p_\nc(A_\c)y,v)
-g_\c (A_\c^r p_\nc(A_\c)x,v)g_\c (A_\c^s y,u)
].
\end{array}
\end{equation}
Inserting \eqref{eq:Step2} and \eqref{eq:Step3} into \eqref{eq:Step1} yields
\begin{equation}\label{eq:Step4}
\begin{array}{c}
\displaystyle \tilde{R}_{g_H,\Theta/p_\nc,A_\c}(x,y,u,v)=
\vspace{1mm}\\
\displaystyle =\tfrac{1}{2}\tilde R_{g_\c,\Theta,A_\c}(p_\nc(A_\c)x,y,u,v)+\tfrac{1}{2}\tilde R_{g_\c,\Theta,A_\c}(x,p_\nc(A_\c)y,u,v).
\end{array}
\end{equation}
Now $\tilde R_{g_\c,\Theta,A_\c}(x,p_\nc(A_\c)y,u,v)=g_\c(x,\tilde R_{g_\c,\Theta,A_\c}(u,v)p_\nc(A_\c)y)$ in $(1,3)$-tensor notation.
Since $[\tilde{R}_{g_\c,\Theta,A_\c}(u,v),p_\nc(A_\c)]=0$ (because of $[\tilde{R}_{g_\c,\Theta,A_\c}(u\wedge v),A_\c]=0$), we obtain 
$$
\tilde R_{g_\c,\Theta,A_\c}(x,p_\nc(A_\c)y,u,v)=g_\c(p_\nc(A_\c)x,\tilde R_{g_\c,\Theta,A_\c}(u,v)y)=\tilde R_{g_\c,\Theta,A_\c}(p_\nc(A_\c)x,y,u,v).
$$
Then, \eqref{eq:Step4} can be rewritten as
\begin{equation}\label{eq:thirdeq}
\tilde{R}_{g_H,\Theta/p_\nc,A_\c}(x,y,u,v)=\tilde R_{g_\c,\Theta,A_\c}(p_\nc(A_\c)x,y,u,v).
\end{equation}
So far the discussion for the Riemannian curvature tensors. We now apply the projection operator
${\mathsf{pr}}:\mathcal{R}(\mathbb V)\rightarrow \mathcal{K}(\mathbb V)$  from Lemma~\ref{lem:projection} to both sides of \eqref{eq:thirdeq}
and obtain 
\begin{equation}\label{eq:fortheq}
\begin{array}{c}
R_{g_H,\Theta/p_\nc,A_\c}(x,y,u,v)=2\,{\mathsf{pr}}(\tilde{R}_{g_H,\Theta/p_\nc,A_\c})(x,y,u,v)
\vspace{1mm}\\
=2\,{\mathsf{pr}}(\tilde{R}_{g_\c,\Theta,A_\c})(p_\nc(A_\c)x,y,u,v)=R_{g_\c,\Theta,A_\c}(p_\nc(A_\c)x,y,u,v)
\end{array}
\end{equation}
(we used that $p_\nc(A_\c)$ commutes with $J$).
Finally, we conclude
$$
R_\c(p_\nc(A_\c)x,y,u,v)=g_\c(p_\nc(A_\c)x,R_\c(u,v)y)=g_H(x,R_\c(u,v)y)=R_{g_H}(x,y,u,v)
$$
$$
\overset{\eqref{eq:horizBFcond}}{=}R_{g_H,\Theta/p_\nc,A_\c}(x,y,u,v)
\overset{\eqref{eq:fortheq}}{=}R_{g_\c,\Theta,A_\c}(p_\nc(A_\c)x,y,u,v)
$$
for all $x,y,u,v\in \mathbb V$. Thus, \eqref{eq:horizBFcond} is equivalent to
$R_\c=R_{g_\c,\Theta,A_\c}$ as we claimed.
\end{proof}

\subsection*{\bf Acknowledgements.} We are grateful to D.M.J. Calderbank, V.S. Matveev and L. Schwachh\"ofer for discussions and comments concerning this paper. 
The work of the first author was supported by the Russian Science Foundation (grant No. 17-11-01303).
The second author thanks Deutsche Forschungsgemeinschaft (Research training group   1523 --- Quantum and Gravitational Fields), 
Friedrich-Schiller-Universit\"at Jena and Leibniz Universit\"at Hannover for partial financial support.


\end{document}